\DeclareSymbolFont{AMSb}{U}{msb}{m}{n}
   \renewcommand\@biblabel[1]{#1.}
    \numberwithin{equation}{section}
\newcommand{\mot}{\mathsf{mot}}
\DeclareSymbolFont{usualmathcal}{OMS}{cmsy}{m}{n}
\DeclareSymbolFontAlphabet{\mathcal}{usualmathcal}
\DeclareMathAlphabet\BCal{OMS}{cmsy}{b}{n}
\definecolor{cornellred}{rgb}{0.7, 0.11, 0.11}
\definecolor{britishracinggreen}{rgb}{0.0, 0.26, 0.15}
\definecolor{cobalt}{rgb}{0.0, 0.28, 0.67}
\newcommand{\boldit}[1]{\boldsymbol{#1}} 
\newcommand{\dSt}{\mathbf{dSt}_{\BC}}
\newcommand{\dSpec}{\mathbf{Spec}\,}
\newcommand{\qcoh}{\mathsf{qcoh}}
\newcommand{\dgmod}{\mathrm{dg}\textrm{-}\mathrm{Mod}}
\newcommand{\dR}{\mathrm{dR}}
\newcommand{\BA}{{\mathbb{A}}}
\newcommand{\BC}{{\mathbb{C}}}
\newcommand{\BE}{{\mathbb{E}}}
\newcommand{\BL}{{\mathbb{L}}}
\newcommand{\BN}{{\mathbb{N}}}
\newcommand{\BP}{{\mathbb{P}}}
\newcommand{\BQ}{{\mathbb{Q}}}
\newcommand{\BR}{{\mathbb{R}}}
\newcommand{\BT}{{\mathbb{T}}}
\newcommand{\BZ}{{\mathbb{Z}}}
\newcommand{\CM}{{\mathcal M}}
\newcommand{\CO}{{\mathcal O}}
\newcommand{\CS}{{\mathcal S}}
\newcommand{\CT}{{\mathcal T}}
\newcommand{\CX}{{\mathcal X}}
\newcommand{\CZ}{{\mathcal Z}}
\newcommand{\simto}{\,\widetilde{\to}\,}
\newcommand{\into}{\hookrightarrow}
\newcommand{\onto}{\twoheadrightarrow}
\newcommand{\Hess}{\mathsf{Hess}}
\newcommand{\BRcrit}{\mathbb{R}\mathrm{crit}}
\newcommand{\derived}{\mathbf{D}}
\newcommand{\dd}{\mathrm{d}}
\newcommand{\RR}{\mathbf R}
\newcommand{\Id}{\operatorname{Id}}
\newcommand{\id}{\operatorname{id}}
\newcommand{\Spec}{\operatorname{Spec}}
\newcommand{\Supp}{\operatorname{Supp}}
\newcommand{\ch}{\mathsf{ch}}
\newcommand{\HH}{\mathrm{H}}
\newcommand{\OO}{\mathscr O}
\newcommand{\crit}{\operatorname{crit}}
\newcommand*{\defeq}{\mathrel{\vcenter{\baselineskip0.5ex \lineskiplimit0pt
                     \hbox{\scriptsize.}\hbox{\scriptsize.}}}=}
\DeclareMathOperator{\Hilb}{Hilb}
\DeclareMathOperator{\Quot}{Quot}
\DeclareMathOperator{\NCQuot}{ncQuot}
\DeclareMathOperator{\Sets}{Sets}
\DeclareMathOperator{\Sch}{Sch}
\DeclareMathOperator{\Coh}{Coh}
\DeclareMathOperator{\QCoh}{QCoh}
\DeclareMathOperator{\red}{red}
\DeclareMathOperator{\der}{der}
\DeclareMathOperator{\Sym}{Sym}
\DeclareMathOperator{\Tr}{Tr}
\DeclareMathOperator{\Rep}{Rep}
\DeclareMathOperator{\GL}{GL}
\DeclareMathOperator{\Mat}{Mat}
\DeclareMathOperator{\op}{op}
\DeclareMathOperator{\tr}{tr}
\DeclareMathOperator{\pr}{pr}
\DeclareMathOperator{\At}{At}
\newcommand{\fg}{{\mathfrak g}}
\newcommand{\fm}{{\mathfrak m}}
\newcommand{\lr}{\longrightarrow}
\newcommand{\bfa}{{\mathbf a}}
\newcommand{\unal}{\underline{\alpha}}
\newcommand{\unbet}{\underline{\beta}}
\newcommand{\ungam}{\underline{\gamma}}
\renewcommand{\hat}{\widehat}
\newcommand{\loc}{\mathrm{loc}}
\newcommand{\gl}{\mathfrak{gl}}
\newcommand{\cdga}{{\mathrm{cdga_{\BC}^{\leq 0}}}}
\newcommand{\dga}{\mathrm{dga}_\BC}
\DeclareFontFamily{OT1}{rsfs}{}
\DeclareFontShape{OT1}{rsfs}{n}{it}{<-> rsfs10}{}
\DeclareMathAlphabet{\curly}{OT1}{rsfs}{n}{it}
\newcommand\Ext{\operatorname{Ext}}
\newcommand\Hom{\operatorname{Hom}}
\newcommand\End{\operatorname{End}}
\newcommand{\lHom}{\mathscr{H}\kern-0.3em {o}\kern-0.2em{m}}
\newcommand{\RRlHom}{\mathbf{R}\kern-0.025em\mathscr{H}\kern-0.3em {o}\kern-0.2em{m}}
\newcommand{\lExt}{{\mathscr{E}\kern-0.2em xt}}
\newcommand{\RHom}{{\mathbf{R}\kern-0.07em\mathrm{Hom}}}
\newcommand*{\isoarrow}[1]{\arrow[#1,"\rotatebox{90}{\(\sim\)}"
]}
\tikzset{commutative diagrams/arrow style=math font}
\tikzset{commutative diagrams/.cd,
mysymbol/.style={start anchor=center,end anchor=center,draw=none}}
\newcommand\MySymb[2][\square]{%
  \arrow[mysymbol]{#2}[description]{#1}}
\tikzset{
shift up/.style={
to path={([yshift=#1]\tikztostart.east) -- ([yshift=#1]\tikztotarget.west) \tikztonodes}
}
}
\theoremstyle{definition}
\newtheorem*{lemma*}{Lemma}
\newtheorem*{theorem*}{Theorem}
\newtheorem*{example*}{Example}
\newtheorem*{fact*}{Fact}
\newtheorem*{notation*}{Notation}
\newtheorem*{definition*}{Definition}
\newtheorem*{prop*}{Proposition}
\newtheorem*{remark*}{Remark}
\newtheorem*{corollary*}{Corollary}
\newtheorem*{conventions*}{Conventions}
\newtheorem*{question*}{Question}
\newtheorem{definition}{Definition}[section]
\newtheorem{notation}[definition]{Notation}
\newtheorem{remark}[definition]{Remark}
\newtheoremstyle{thm} 
        {3mm}
        {3mm}
        {\slshape}
        {0mm}
        {\bfseries}
        {.}
        {1mm}
        {}
\theoremstyle{thm}
\newtheorem{theorem}[definition]{Theorem}
\newtheorem{corollary}[definition]{Corollary}
\newtheorem{lemma}[definition]{Lemma}
\newtheorem{prop}[definition]{Proposition}
\newtheorem{thm}{Theorem}
\let\origmaketitle\maketitle
\def\maketitle{
  \begingroup
  \def\uppercasenonmath##1{} 
  \origmaketitle
  \endgroup
}
\title[The d-critical structure on the Quot scheme of points of a Calabi--Yau  3-fold]{\Large{The d-critical structure \\ on the Quot scheme of points of a Calabi--Yau 3-fold}}
\author{Andrea T. Ricolfi \and Michail Savvas}
\date{}
\begin{document}
\maketitle

\begin{abstract}
The Artin stack $\CM_n$ of $0$-dimensional sheaves of length $n$ on $\BA^3$ carries two natural d-critical structures in the sense of Joyce. One comes from its description as a quotient stack $[\crit(f_n)/\GL_n]$, another comes from derived deformation theory of sheaves. We show that these d-critical structures agree. We use this result to prove the analogous statement for the Quot scheme of points $\Quot_{\BA^3}(\OO^{\oplus r},n) = \crit(f_{r,n})$, which is a global critical locus for every $r>0$, and also carries a derived-in-flavour d-critical structure besides the one induced by the potential $f_{r,n}$. Again, we show these two d-critical structures agree. Moreover, we prove that they locally model the d-critical structure on $\Quot_X(F,n)$, where $F$ is a locally free sheaf of rank $r$ on a projective Calabi--Yau $3$-fold $X$.

Finally, we prove that the perfect obstruction theory on $\Hilb^n\BA^3=\crit(f_{1,n})$ induced by the Atiyah class of the universal ideal agrees with the \emph{critical} obstruction theory induced by the Hessian of the potential $f_{1,n}$.
\end{abstract}

{\hypersetup{linkcolor=black}
\tableofcontents}

\section{Introduction}

\subsection{Overview}
In \cite{Joyce1}, Joyce introduced \emph{d-critical structures} on schemes and Artin stacks, both in algebraic and analytic language.
Roughly speaking, a d-critical scheme is a scheme $X$ suitably covered by `d-critical charts', i.e.~schemes of the form $\crit (f) \into V$, where $f \in \OO(V)$ is a regular function on a smooth scheme $V$ and $\crit(f)$ denotes the scheme-theoretic vanishing locus of $\dd f \in \HH^0(V, \Omega_V)$. The compatibility between d-critical charts is governed by the behaviour of a section $s$ of a canonically defined sheaf of $\BC$-vector spaces $\CS^0_X$ living over $X$. Such a well-behaving section is called a 
`d-critical structure' on $X$ (see \Cref{subsec:d-crit-stacks} for more details). We use the notation $(X,s)$ to represent a d-critical scheme.

Joyce's d-critical loci play a central role in categorified Donaldson--Thomas theory: the compatibility between d-critical charts encoded in their structure is crucial to associate to a moduli space $M$ of sheaves on a Calabi--Yau $3$-fold a canonical\footnote{Strictly speaking, a choice of orientation data is also needed, see \cite{BBDJS} and \cite{BJM} for the precise statements.} perverse sheaf $\Phi_M$ (the so-called \emph{DT sheaf}), as well as a canonical virtual motive $\Phi_M^{\mot}$ (the so-called \emph{motivic DT invariant}).

Any d-critical chart $\crit(f)\into V$ has a canonical symmetric perfect obstruction theory in the sense of Behrend--Fantechi \cite{BFHilb}, induced by the Hessian of $f$. Since these symmetric obstruction theories do not necessarily glue as two-term complexes (see \cite[Example 2.17]{Joyce1}), a d-critical scheme $(X,s)$ cannot be equipped with a global symmetric perfect obstruction theory in general. However, the compatibility between d-critical charts is enough to ensure that these locally defined obstruction theories give rise to a slightly weaker structure on $X$ with similar properties, called an \emph{almost perfect obstruction theory}, as was shown in \cite{KiemSavvas}.

\smallbreak
On the other hand, Pantev--To\"{e}n--Vaqui\'e--Vezzosi \cite{PTVV} defined $k$-\emph{shifted symplectic structures} on derived schemes and derived Artin stacks (we recall their definition in \Cref{subsec:background_on_sss_and_d-crit}), for every $k \in \BZ$. It is explained in \cite[Section 3.2]{PTVV} that every $-1$-shifted symplectic structure $\omega$ on a derived scheme $\boldit{X}$ induces a symmetric perfect obstruction theory on the underlying classical scheme $X = t_0(\boldit{X}) \into \boldit{X}$. Furthermore, as we recall in \Cref{thm:truncation}, there is a \emph{truncation functor}
\begin{equation}\label{eqn:truncation}
\big\{-1\textrm{-shifted symplectic derived Artin stacks}\big\}\, \xrightarrow{\tau}\, \big\{\textrm{d-critical Artin stacks}\big\}
\end{equation}
which takes $(\BCal{X},\omega) \mapsto (\CX,s_\omega)$, where $\CX = t_0(\BCal{X})\into \BCal{X}$ in the underlying classical Artin stack of $\BCal{X}$ and $s_\omega \in \HH^0(\CS_{\CX}^0)$ is a natural d-critical structure constructed out of $\omega$. For a $-1$-shifted symplectic derived scheme $(\boldit{X},\omega)$, the induced symmetric perfect obstruction theory on $X = t_0(\boldit{X}) \into \boldit{X}$ is isomorphic to the almost perfect obstruction theory arising from $(X,s_\omega)$.

The structures described so far are reproduced in \Cref{Fig:Joyce_picture}, inspired from a larger picture in \cite{Joyce1}, where a dotted arrow means that the association only works locally. We will explain the question mark `?' appearing in the diagram in the next subsection.

\begin{figure}[ht]
    \centering
\begin{tikzpicture}[>=stealth,->,shorten >=2pt,looseness=.5,auto]
\node at (3.45,0.3) {truncation $\tau$};
\node at (5.25,1.1) {?};
\node[draw,align=center] (E) at (3.5,2) {derived moduli of sheaves\\ on a Calabi--Yau $3$-fold};
\node[draw,align=center] (A) at (0,0) {$-1$-shifted symplectic\\ derived schemes \cite{PTVV}};
\node[draw,align=center] (B) at (6.5,0) {Joyce's d-critical  \\ schemes \cite{Joyce1}};
\node[draw,align=center] (C) at (0,-4) {schemes with symmetric \\ perfect obstruction theory \cite{BFHilb}};
\node[draw,align=center] (D) at (6.5,-4) {schemes with almost perfect\\ obstruction theory \cite{KiemSavvas}};
\draw (A) to [bend left=0,looseness=1] (B) node [midway,above] {};
\draw (A) to [bend left=0,looseness=1] (C);
\draw (B) to [bend left=0,looseness=1] (D);
\draw (C) to [bend left=0,looseness=1] (D);
\draw (E) to [bend left=0,looseness=1] (A);
\draw (E) to [bend left=0,looseness=1] (B);
\draw [dashed] (B) to [bend left=0,looseness=1] (C) node [midway,above] {};
\draw [dashed] (B) to [bend left=10,looseness=1] (A) node [midway,above] {};
\end{tikzpicture}
    \caption{Landscape of the structures and their relations appearing in this paper.}
    \label{Fig:Joyce_picture}
\end{figure}
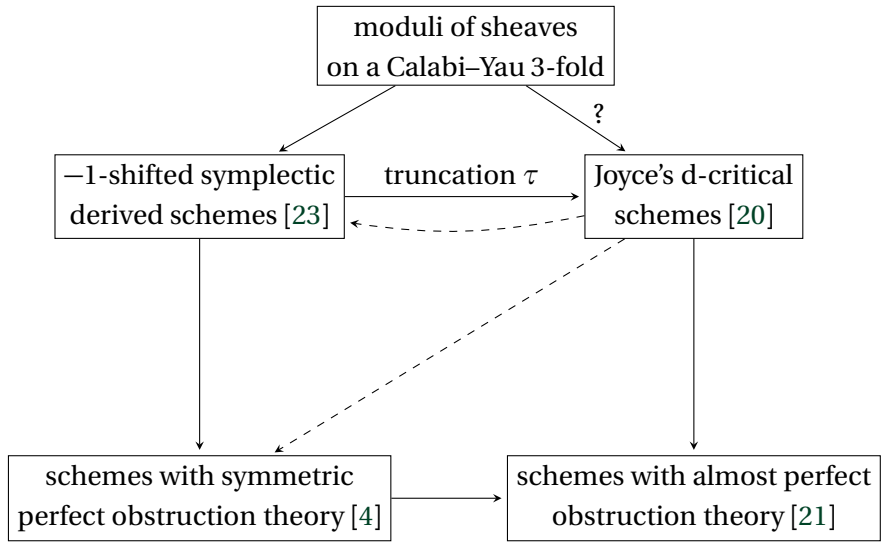

\subsection{Motivation}
A classical example of $-1$-shifted symplectic derived scheme is that of a \emph{derived critical locus} $\BRcrit(f)$, for $f \in \OO(V)$ a regular function on a smooth scheme $V$ \cite[Corollary 2.11]{PTVV}.  The space $\BRcrit(f)$ is defined as the derived fibre product of the zero section of $\Omega_V$, carrying its canonical symplectic structure, with the section $\dd f \in \HH^0(V,\Omega_V)$. The $-1$-shifted symplectic structure is denoted $\omega_{f}$ in this case. We can view the classical scheme $U=\crit(f)=t_0(\BRcrit(f))$ as a d-critical locus with d-critical structure 
\begin{equation}\label{eqn:critical_s}
s_f = f + (\dd f)^2 \in \HH^0(\CS_U^0)
\end{equation}
determined by a single d-critical chart, and the diagonal dotted arrow in \Cref{Fig:Joyce_picture} can in fact, in this special case, be filled in by means of the \emph{critical symmetric obstruction theory}
\begin{equation}\label{eqn:critical_pot}
    \BE_f = \bigl[T_V\big|_U \xrightarrow{\Hess(f)} \Omega_V\big|_U\bigr] \to \BL_U,
\end{equation}
where $\BL_Y$ is the truncated cotangent complex of a scheme $Y$. Moreover, in this case, the functor $\tau$ in \eqref{eqn:truncation} sends $(\BRcrit(f),\omega_{f})$ to $(\crit(f),s_f)$.

A further example of a $-1$-shifted symplectic derived scheme is the derived moduli scheme $\boldit{M}_X(\ch)$ of simple coherent sheaves on a projective Calabi--Yau $3$-fold $X$, with fixed Chern character $\ch \in \HH^\ast(X,\BQ)$, see \cite{PTVV}. 
In particular, the underlying classical scheme $M_X(\ch)$ is naturally a d-critical locus via the truncation functor \eqref{eqn:truncation}. However, as Behrend pointed out in \cite{Behrend_MSRI}, it is ``an embarassment of the theory'' that one cannot construct the d-critical structure on $M_X(\ch)$ directly, i.e.~without passing through derived Algebraic Geometry. This is the meaning of the question mark in \Cref{Fig:Joyce_picture}. 

This ``embarassment'' was our main motivation for starting this project. This paper dissolves such embarassment in the following sense. On the Quot scheme of $n$ points on $\BA^3$, namely the space
\begin{equation}\label{def:quot}
\mathrm{Q}_{r,n} = \Quot_{\BA^3}(\OO^{\oplus r},n) = \Set{[\OO^{\oplus r} \onto E] | \dim E = 0,\,\chi(E)=n},
\end{equation}
there are in principle \emph{two} d-critical structures:
\begin{enumerate}
    \item one arising from its critical structure $\crit(f_{r,n}) \simto \mathrm{Q}_{r,n}$ \cite{BR18} (and so a priori independent of derived geometry),\label{item1}
    \item  another arising as follows: the derived moduli stack $\BCal{M}_n$ of $0$-dimensional sheaves of length $n$ on $\BA^3$ is $-1$-shifted symplectic \cite{Brav-Dyckerhoff-II}, so by truncation one has a d-critical structure $s_n^{\der}$ on the underlying classical Artin stack $\CM_n$. Its pullback along the (smooth) morphism $\mathrm{Q}_{r,n} \to \CM_n$ forgetting the surjection defines yet another d-critical structure on $\mathrm{Q}_{r,n}$.\label{item2}
\end{enumerate}
We prove that the d-critical structures described in \eqref{item1} and \eqref{item2} agree, which shows that the d-critical structure coming from derived geometry is `morally underived', and is the simplest possible.

We also make some progress in the projective case. More precisely, let $F$ be a locally free sheaf of rank $r>0$ on a projective Calabi--Yau $3$-fold $X$. There is a natural `derived' d-critical structure on the Quot scheme of points $\Quot_X(F,n)$. We devote \Cref{sec:compact_section} to showing that such d-critical structure looks \'etale locally like the d-critical structure in \eqref{item2} above, which in turn agrees with the one induced by the unique d-critical chart on the local model $\Quot_{\BA^3}(\OO^{\oplus r},n)$.

\subsection{Main results}

We discuss in greater details our main results in the rest of this introduction.

\subsubsection{The local case}
The Quot scheme of points \eqref{def:quot}
parametrising isomorphism classes of quotients $\OO^{\oplus r} \twoheadrightarrow E$, where $E$ is a 0-dimensional sheaf of length $n$ on $\BA^3$, is proven in \cite[Theorem 2.6]{BR18} to be a global critical locus (cf.~\Cref{critical_quot}). More precisely, there is a diagram
\begin{equation}\label{diag:ncquot}
\begin{tikzcd}
\NCQuot^n_r\,\supset\,\crit(f_{r,n}) \arrow[r,"\iota_{r,n}","\sim"'] & \mathrm{Q}_{r,n}
\end{tikzcd}
\end{equation}
where $f_{r,n}$ is a regular function on the \emph{noncommutative Quot scheme} $\NCQuot^n_r$, a smooth $(2n^2+rn)$-dimensional variety which can be viewed as the moduli space of (isomorphism classes of) $(1,n)$-dimensional stable $r$-framed representations $(A,B,C,v_1,\ldots,v_r) \in \End_{\BC}(\BC^n)^3 \times (\BC^n)^r$ of the $3$-loop quiver (cf.~\Cref{fig:3loop_framed}). The function $f_{r,n}\in \OO(\NCQuot^n_r)$ is defined to be the trace of the potential $A[B,C]$. It defines a d-critical structure as in \eqref{eqn:critical_s}, namely
\[
s_{r,n}^{\crit} = s_{f_{r,n}} = f_{r,n} + (\dd f_{r,n})^2\,\in\,\HH^0\left(\CS^0_{\crit(f_{r,n})}\right).
\]
On the other hand, derived symplectic geometry endows the derived moduli stack $\BCal{M}_n$ of $0$-dimensional sheaves of length $n$ on $\BA^3$ with a $-1$-shifted symplectic structure $\omega_n$ (see \cite{Brav-Dyckerhoff-II}),
which can be truncated to produce a d-critical structure $s_n^{\der} = s_{\omega_n}$ on the (underived) moduli stack $\CM_n = t_0(\BCal{M}_n)$. The morphism $q_{r,n}\colon \mathrm{Q}_{r,n} \to \CM_n$ sending $[\OO^{\oplus r}\onto E]\mapsto [E]$ is smooth (cf.~\Cref{subsec:smoothness_forgetful}), so the pullback 
\[
s_{r,n}^{\der} = q_{r,n}^\ast s_n^{\der}
\]
defines a d-critical structure on $\mathrm{Q}_{r,n}$ (cf.~\Cref{def:derived_dcrit_quot}).

The following is our first main result.

\begin{thm}\label{main_thm}
Fix $r\geq 1$ and $n\geq 0$. There is an identity
\[
\iota_{r,n}^\ast s_{r,n}^{\der} = s_{r,n}^{\crit} \in \HH^0\left(\CS^0_{\crit(f_{r,n})}\right).
\]
\end{thm}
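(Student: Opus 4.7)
The plan is to first establish the analogous statement at the level of the Artin stack $\CM_n$, and then to transfer it to the Quot scheme along the smooth morphism $q_{r,n}\colon \mathrm{Q}_{r,n} \to \CM_n$. By the quiver description of length-$n$ point sheaves on $\BA^3$, one has $\CM_n \simeq [\crit(f_n)/\GL_n]$, where $f_n = \Tr(A[B,C])$ is the Chern--Simons potential on $V_n = \End(\BC^n)^3$. This endows $\CM_n$ with a natural ``critical'' d-critical structure $s_n^{\crit}$ obtained by descending $s_{f_n} = f_n + (\dd f_n)^2$ from the smooth atlas $\crit(f_n) \to \CM_n$. The central technical step is then the equality
\[
s_n^{\der} = s_n^{\crit} \in \HH^0(\CS^0_{\CM_n}).
\]

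To prove this, I would exhibit an equivalence of $-1$-shifted symplectic derived Artin stacks
\[
[\BRcrit(f_n)/\GL_n] \simeq (\BCal{M}_n,\omega_n),
\]
where the left-hand side carries its canonical derived critical-locus symplectic structure. The existence of such an equivalence is essentially the content of Brav--Dyckerhoff's construction: the cyclic $A_\infty$ enhancement of $\Perf(\BA^3)$ from which $\omega_n$ is built is modelled by the three-loop quiver with cyclic potential $\Tr(A[B,C])$, so the pairing $\omega_n$ is identified with the canonical symplectic form on the quotient of the derived critical locus $\BRcrit(f_n)$. Applying the truncation functor $\tau$ of \eqref{eqn:truncation} to both sides and using the fact that $\tau$ sends $(\BRcrit(f_n),\omega_{f_n})$ to $(\crit(f_n), s_{f_n})$ yields the desired equality on the smooth atlas, hence on $\CM_n$ by descent.

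With the stacky comparison in hand, the Quot scheme version follows formally. The forgetful map $\pi_{r,n}\colon \NCQuot^n_r \to [V_n/\GL_n]$ is smooth and pulls $f_n$ back to $f_{r,n}$, so the chart $\iota_{r,n}\colon \crit(f_{r,n}) \hookrightarrow \NCQuot^n_r$ of $\mathrm{Q}_{r,n}$ is the smooth pullback, under $q_{r,n}$, of the chart $\crit(f_n) \hookrightarrow V_n$ of $\CM_n$. By the naturality of the construction $f \mapsto s_f$ under smooth pullback, this gives
\[
\iota_{r,n}^\ast q_{r,n}^\ast s_n^{\crit} = s_{r,n}^{\crit}.
\]
Combined with the definition $s_{r,n}^{\der} = q_{r,n}^\ast s_n^{\der}$ and the stacky comparison $s_n^{\der} = s_n^{\crit}$, one concludes $\iota_{r,n}^\ast s_{r,n}^{\der} = s_{r,n}^{\crit}$.

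The hard part is the stacky comparison $s_n^{\der} = s_n^{\crit}$: one must identify the abstractly defined derived symplectic form on $\BCal{M}_n$, whose construction relies on Serre duality for the Calabi--Yau $3$-fold $\BA^3$ and on Tate-style trace maps on cyclic cohomology, with the canonical form on the derived critical locus of $f_n$. Concretely, this amounts to checking that Brav--Dyckerhoff's symplectic form, restricted to the smooth atlas $\crit(f_n) \to \CM_n$, is represented (modulo the square of the ideal $I_{\crit(f_n), V_n}$) precisely by the function $f_n$; this is where the derived deformation theory of point sheaves must be made explicit enough to match with the three-loop quiver presentation.
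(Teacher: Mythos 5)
Your overall architecture is exactly the paper's: first establish the equality $\iota_n^\ast s_n^{\der}=s_n^{\crit}$ at the level of $\CM_n$, then descend to the Quot scheme using the smooth forgetful map $q_{r,n}$, the smooth map $p_{r,n}\colon\NCQuot^n_r\to[\Rep_n(L_3)/\GL_n]$ forgetting the framing, and the relation $p_{r,n}^\ast f_n=f_{r,n}$. That descent step matches the paper's Proposition~\ref{prop:PB_of_sn} and the chain of equalities in Theorem~\ref{main_thm_BODY} word for word.

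The problem is the central step. You propose to obtain $s_n^{\der}=s_n^{\crit}$ by exhibiting an equivalence of $-1$-shifted symplectic derived Artin stacks $[\BRcrit(f_n)/\GL_n]\simeq(\BCal{M}_n,\omega_n)$ and then applying the truncation functor $\tau$. This is precisely the ``stronger statement'' that the paper raises at the end of Section~3 and \emph{explicitly declines to prove}: it would require constructing a $-1$-shifted symplectic form on the derived quotient stack $[\dSpec(Q_3)_n/\GL_n]$ combining the critical-locus form with the one on $\mathrm{B}\GL_n$, and then verifying that $\boldsymbol{\iota}_n^\ast\omega_n$ agrees with it as a closed $2$-form, not just as a pairing on the truncated tangent complex. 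Brav--Dyckerhoff construct $\omega_n$ and identify $\BCal{M}_n$ with a derived quotient stack, but they do not identify $\omega_n$ with the critical-locus shifted symplectic form; you assert this is ``essentially the content'' of their construction, but it isn't, and you acknowledge as much in your last paragraph without closing the gap. The paper avoids exactly this because of technicalities with shifted symplectic structures on derived quotient stacks.

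What the paper does instead is a local argument: on the quiver side it uses Luna's \'{e}tale slice theorem at a polystable point $y_E$, together with a non-degeneracy computation for the Hessian of $f_n$ transverse to the slice, to produce \'{e}tale d-critical charts with potential $g_{\boldit{a}}$ (Lemma~\ref{etale d-critical chart s_n^crit}); on the derived side it proves formality of the cyclic dg-algebra $\RHom(E,E)$ by writing down an explicit quasi-isomorphism from its cohomology (Lemma~\ref{lemma:dg} and its corollary), which yields a formal Darboux chart $\crit(\hat g_{\boldit{a}})\to\CM_n$ for $s_n^{\der}$ (Lemma~\ref{formal etale d-critical chart s_n^der}); and finally it matches the formal chart to the \'{e}tale chart via a completion/polynomial-truncation argument (Theorem~\ref{thm:comparison_d-crit}). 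None of this appears in your proposal. If you want to pursue your route, you would need to actually carry out the global comparison of $-1$-shifted symplectic forms that the paper sidesteps; otherwise the argument reduces to ``modulo Lemma, the theorem follows,'' with the Lemma being the whole content.
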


\subsubsection{The global case}

Consider now the case where $\BA^3$ is replaced by a smooth, \emph{projective} Calabi--Yau $3$-fold $X$ and $\OO^{\oplus r}$ by a locally free sheaf $F$ of rank $r$ on $X$. Let $\mathrm Q_{F,n} = \mathrm{Quot}_X(F,n)$ be the Quot scheme parametrising quotients $[F \twoheadrightarrow E]$ with $E$ a $0$-dimensional sheaf of length $n$ on $X$. The derived moduli stack $\BCal{M}_X(n)$ of $0$-dimensional sheaves of length $n$ on $X$ carries a canonical $-1$-shifted symplectic structure by \cite{PTVV}, which can be truncated to give a d-critical structure $s_{X,n} \in \HH^0(\CS^0_{\CM_X(n)})$. Its pullback along the forgetful morphism $\mathrm{Q}_{F,n} \to \CM_X(n)$ is a d-critical structure on the Quot scheme, denoted $s_{F,n}$, see Equation \eqref{eqn:Pullback_to_quot}.

The methods used in the proof of \Cref{main_thm} also yield (after a bit of work) the following result, which says that the d-critical scheme $(\mathrm Q_{F,n}, s_{F,n})$ is locally modelled on the d-critical scheme $(\mathrm Q_{r,n}, s_{r,n}^{\der})$.

\begin{thm}
\label{thm:main_thm_B}
Fix $r \geq 1$ and $n \geq 0$. Let $X$ be a projective Calabi--Yau $3$-fold, $F$ a locally free sheaf of rank $r$ on $X$. There exists an analytic open cover $\{ \rho_\lambda\colon T_\lambda \into \mathrm Q_{F,n} \}_{\lambda \in \Lambda}$, such that for any index $\lambda$ we have a diagram
\[
\begin{tikzcd}
& T_\lambda\arrow[swap]{dl}{\pi_\lambda}\arrow[hook]{dr}{\rho_\lambda} & \\
\mathrm{Q}_{r,n} & & \mathrm{Q}_{F,n}
\end{tikzcd}
\]
with $\pi_\lambda$ \'{e}tale, satisfying $ \pi_\lambda^* s_{r,n}^{\der} = \rho_\lambda^* s_{F,n} \in \HH^0(\CS_{T_\lambda}^0)$.
\end{thm}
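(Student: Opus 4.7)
The plan is to cover $\mathrm Q_{F,n}$ by analytic opens carrying a compatible trivialization of the triple $(X, F, \Omega_X)$, where $\Omega_X \in \HH^0(X,\omega_X)$ is the chosen Calabi--Yau $3$-form, and then to compare both derived d-critical structures after pushing them up to the moduli stack of $0$-dimensional sheaves along the smooth forgetful morphism. For each closed point $p_0 = [F \twoheadrightarrow E_0] \in \mathrm Q_{F,n}$ with support $Z = \Supp(E_0)$, a finite set of points in $X$, I would choose an analytic open neighborhood $U$ of $Z$ in $X$ together with an open analytic embedding $\iota \colon U \into \BA^3$ such that $F|_U \simeq \OO_U^{\oplus r}$ and $\iota^\ast(\dd x_1 \wedge \dd x_2 \wedge \dd x_3) = \Omega_X|_U$ as trivializations of $\omega_U$. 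Such $(U,\iota)$ exist by a Moser-type argument: near a finite set in a complex $3$-manifold, any two nowhere-vanishing holomorphic $3$-forms can be matched by a biholomorphism of a smaller neighborhood. Setting
\[
T_\lambda \defeq \Set{ [F \twoheadrightarrow E] \in \mathrm Q_{F,n} | \Supp(E) \subset U },
\]
an analytic open subset of $\mathrm Q_{F,n}$ containing $p_0$, extension by zero along $\iota$ combined with the chosen trivialization of $F|_U$ yields an open analytic immersion (hence étale) $\pi_\lambda \colon T_\lambda \to \mathrm Q_{r,n}$; the $\rho_\lambda$ are the tautological inclusions, and varying $p_0$ exhausts $\mathrm Q_{F,n}$.

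Next I would reduce the identity of d-critical sections to the moduli stacks. Let $\CM_X(n)^U \subset \CM_X(n)$ and $\CM_n^{\iota(U)} \subset \CM_n$ denote the analytic open substacks of sheaves supported in $U$ and $\iota(U)$, respectively, and similarly for their derived enhancements. The pair $(U,\iota)$ induces an analytic isomorphism of derived stacks $\BCal{M}_X(n)^U \simeq \BCal{M}_n^{\iota(U)}$, compatible with the smooth forgetful morphisms from the Quot schemes. Consequently, both $\rho_\lambda^\ast s_{F,n}$ and $\pi_\lambda^\ast s_{r,n}^{\der}$ coincide with the pullback along the composite $T_\lambda \to \CM_X(n)^U \simeq \CM_n^{\iota(U)}$ of the truncation of the restricted $-1$-shifted symplectic form, and the theorem reduces to the equality of these two restricted symplectic forms.

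The main obstacle is therefore the compatibility of $\omega_{X,n}|_{\BCal{M}_X(n)^U}$ with $\omega_n|_{\BCal{M}_n^{\iota(U)}}$ under the above identification. This amounts to unpacking the PTVV construction for $0$-dimensional sheaves supported in $U$: the pairing on $\RHom(E,E)$ is built as the composition of the cup product followed by the trace, paired against the chosen Calabi--Yau form. For $0$-dimensional $E$ supported in $U$, the identifications $\RHom_X(E,E) \simeq \RHom_U(E,E)$ are canonical, and the pairing depends on the Calabi--Yau form only through $\Omega_X|_U = \iota^\ast(\dd x_1 \wedge \dd x_2 \wedge \dd x_3)$, so the matching of volume forms arranged in the first paragraph forces the two pairings to be literally equal. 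Functoriality of the truncation $\tau$ from \eqref{eqn:truncation} then delivers the equality of d-critical sections, and the second paragraph concludes.
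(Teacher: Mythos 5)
Your strategy is genuinely different from the paper's. Instead of the route through the Ext quiver of $E$, cyclic $A_\infty$-formality of $\RHom(\OO_x,\OO_x)$ (Lemma~\ref{lemma:Ext_algebra}), the explicit computation $\Tr W_{E_\bullet}=\sum_i\Tr A_i[B_i,C_i]$ (Lemma~\ref{lemma:trace_ext_potential}), and the Darboux charts produced by Toda's theorem, you propose to compare the two derived moduli stacks directly after locally trivialising $F$ and matching the Calabi--Yau form via a holomorphic Moser argument. The Moser normalisation is a nice idea (indeed it would eliminate the ``up to a nonzero scalar'' that the paper has to absorb in Lemmas~\ref{lemma:Ext_algebra}--\ref{lemma:trace_ext_potential}), and the reduction to the moduli stacks via the smooth forgetful morphisms is sound and parallel to what the paper does.

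There is, however, a genuine gap at the heart of the argument: you assert that the analytic identification $\BCal{M}_X(n)^U\simeq\BCal{M}_n^{\iota(U)}$ carries the PTVV $(-1)$-shifted symplectic form $\omega_{X,n}$ to the Brav--Dyckerhoff form $\omega_n$ (or at least identifies their d-critical truncations), but you do not prove it. These two structures have a priori quite different origins: $\omega_{X,n}$ is built by the AKSZ formalism with integration against the fundamental class of the compact $X$, while $\omega_n$ comes from the noncommutative Calabi--Yau structure of $\BC[x,y,z]$/the $3$-loop quiver---and since $\BA^3$ is not proper, there is no PTVV structure on $\BCal{M}_n$ to compare directly. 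Saying ``the pairing depends on the Calabi--Yau form only through $\Omega_X|_U$'' is morally right but it is precisely the local comparison that needs to be established, and the paper's approach (formal Darboux charts from Toda, matching the explicit critical-chart sections $g_{\boldit a}|_V+(\dd g_{\boldit a}|_V)^2$ on both sides) is designed exactly to sidestep this. In addition, the step ``functoriality of the truncation $\tau$ then delivers the equality'' needs care: $\tau$ is established for smooth morphisms of schemes/stacks, and its compatibility with \emph{analytic} open immersions or analytic isomorphisms of derived stacks is not automatic. To make your argument go through, you would have to prove (i) that the analytic isomorphism of classical stacks upgrades to an equivalence of derived analytic enhancements, (ii) that under this equivalence the two shifted symplectic structures agree (or at least give the same d-critical truncation), and (iii) that $\tau$ behaves well in the analytic topology. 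Each of these is plausible but nontrivial, and none is handled in the proposal.
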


\subsubsection{The two obstruction theories on \texorpdfstring{$\Hilb^n\BA^3$}{}}
Our third main result is a comparison between perfect obstruction theories on the Hilbert scheme of points $\Hilb^n\BA^3$. If $V_n=\NCQuot^n_1$ is the noncommutative Hilbert scheme, Diagram \eqref{diag:ncquot} becomes
\begin{equation}\label{eqn:Hilb_is_critical}
\begin{tikzcd}
V_n \,\supset\, \crit(f_{1,n})\arrow[r,"\iota_{1,n}","\sim"'] & 
\Hilb^n\BA^3,
\end{tikzcd}
\end{equation}
and the Hessian construction \eqref{eqn:critical_pot} defines a symmetric obstruction theory
\begin{equation}\label{eqn:crit_pot_1231}
\begin{tikzcd} \BE_{f_{1,n}} \arrow{r}{\varphi_{\crit}} & \BL_{\crit(f_{1,n})}.
\end{tikzcd}
\end{equation}
On the other hand, viewing the Hilbert scheme as a parameter space for ideal sheaves of colength $n$, one obtains the symmetric obstruction theory (more details are found in  \cite{Quot19,Equivariant_Atiyah_Class})
\[
\begin{tikzcd}\BE_{\der} = \RR \pi_\ast \RRlHom(\mathfrak I,\mathfrak I)_0[2] \arrow{r}{\varphi_{\der}} & \BL_{\Hilb^n\BA^3},
\end{tikzcd}
\]
where $\mathfrak I \subset \OO_{\BA^3\times \Hilb^n\BA^3}$ is the universal ideal sheaf, $\pi\colon \BA^3 \times  \Hilb^n\BA^3 \to \Hilb^n\BA^3$ is the second projection, and $\RRlHom(\mathfrak I,\mathfrak I)_0$ denotes the $[-1]$-shifted cone of the trace map $\RRlHom(\mathfrak I,\mathfrak I) \to \OO$. 

The following result, established in \Cref{sec:derived_hilb}, proves Conjecture 9.9 in \cite{FMR}.
\begin{thm}
\label{thm:symmetric_pot}
The isomorphism $\iota_{1,n}$ in \eqref{eqn:Hilb_is_critical} induces an isomorphism of perfect obstruction theories 
\[
\begin{tikzcd}
\iota_{1,n}^\ast \BE_{\der}\arrow{rr}{\sim}\arrow[swap]{dr}{\iota_{1,n}^\ast\varphi_{\der}} & & \BE_{f_{1,n}}\arrow{dl}{\varphi_{\crit}} \\
& \BL_{\crit(f_{1,n})} & 
\end{tikzcd}
\]
\end{thm}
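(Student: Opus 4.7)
My strategy is to compute $\iota_{1,n}^\ast \BE_{\der}$ through an explicit Koszul-type resolution of the universal ideal on $\BA^3 \times V_n$, to recognize the outcome as the two-term Hessian complex $[T_{V_n} \to \Omega_{V_n}]|_{\crit(f_{1,n})}$ via the GIT presentation of $V_n$, and finally to verify that the resulting quasi-isomorphism intertwines the Atiyah-class arrow $\varphi_{\der}$ with the canonical critical arrow $\varphi_{\crit}$.

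Let $A,B,C \in \End_\BC(\BC^n) \otimes \OO_{V_n}$ and $v \in \BC^n \otimes \OO_{V_n}$ denote the universal data on $V_n = \NCQuot^n_1$, and $x_1,x_2,x_3$ the coordinates on $\BA^3$. On $\crit(f_{1,n})$ the matrices $A,B,C$ commute, so the universal quotient sheaf $\mathcal E$ on $\BA^3 \times \crit(f_{1,n})$ is resolved by the tautological Koszul complex
\[
K^\bullet \colon\ \OO \otimes \BC^n \otimes \wedge^3 \BC^3 \to \OO \otimes \BC^n \otimes \wedge^2 \BC^3 \to \OO \otimes \BC^n \otimes \BC^3 \to \OO \otimes \BC^n,
\]
with differentials built from $x_i - A_i$, where $(A_1,A_2,A_3) = (A,B,C)$. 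Together with $0 \to \mathfrak I \to \OO \to \mathcal E \to 0$ this yields a concrete model for $\mathfrak I$ and, after plugging into $\mathbf R \pi_\ast \RRlHom(-,-)_0[2]$, an explicit four-term perfect representative for $\iota_{1,n}^\ast \BE_{\der}$, whose outer terms involve $\End(\BC^n)$ and the framing data, and whose outer differentials encode the $\GL_n$-moment map $X \mapsto (\mathrm{ad}_A X, \mathrm{ad}_B X, \mathrm{ad}_C X, X v)$ and its Serre dual.

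Because $V_n$ is the GIT quotient of $\End(\BC^n)^3 \times \BC^n$ by the free $\GL_n$-action on stable points, these outer pieces coincide with the data describing $T_{V_n}$ and $\Omega_{V_n}$ restricted to $\crit(f_{1,n})$. Cancelling them produces a quasi-isomorphism with the two-term complex $[T_{V_n} \to \Omega_{V_n}]|_{\crit(f_{1,n})}$, whose central differential is a block matrix in the commutators $\mathrm{ad}_A, \mathrm{ad}_B, \mathrm{ad}_C$ and in the framing. A direct differentiation of $f_{1,n} = \Tr(A[B,C])$ identifies this block matrix with $\Hess(f_{1,n})$, delivering the desired quasi-isomorphism $\iota_{1,n}^\ast \BE_{\der} \simeq \BE_{f_{1,n}}$.

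To conclude, one verifies that this quasi-isomorphism intertwines the structure maps to $\BL_{\crit(f_{1,n})}$. The map $\varphi_{\crit}$ is tautological. For $\varphi_{\der}$, one uses that the Atiyah class of $\mathcal E$ on $K^\bullet$ is computed, up to Koszul signs, by differentiating the entries $x_i - A_i$ along the $V_n$-directions; its truncation to a map to $\Omega_{V_n}|_{\crit(f_{1,n})}$ coincides with the canonical inclusion that appears in $\varphi_{\crit}$. The main obstacle will be this last step: the bookkeeping of the Atiyah class on the Koszul model is delicate, requiring careful sign tracking, the passage between the traceless $\Ext$'s and the $\GL_n$-quotient presentation, and the trace contributions from the framing. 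A reassuring consistency check is \Cref{main_thm}: since $\iota_{1,n}^\ast s_{1,n}^{\der} = s_{1,n}^{\crit}$, no hidden ambiguity can prevent promoting the quasi-isomorphism above to a full isomorphism of perfect obstruction theories.
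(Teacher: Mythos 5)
Your proposed route is genuinely different from the paper's. You want to compute $\iota_{1,n}^\ast\BE_{\der}$ explicitly via a Koszul resolution of $\OO_\CZ$ over $\BA^3\times\crit(f_{1,n})$, reduce the resulting four-term complex to the Hessian two-term complex by cancelling the $\GL_n$-moment-map pieces, and then verify the compatibility of the structure maps by differentiating the Koszul differentials. The paper instead routes everything through the derived critical locus $\mathbf{Q}=\BRcrit(f_{1,n})$ and the smooth derived morphism $\boldit{q}\colon\mathbf{Q}\to\BCal{M}_n$: both $\BE_{\crit}$ and $\BE_{\der}$ are exhibited as truncations of $\boldit{q}^\ast\BL_{\BCal{M}_n}$, and the compatibility with the Atiyah class is reduced to general comparison statements in the literature on derived moduli of sheaves. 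Your approach is more hands-on and would give a self-contained proof if carried through, whereas the paper's is shorter because it leans on those citations.

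That said, there is a genuine gap, and it is precisely where you flag the ``main obstacle.'' The final step -- checking that your explicit quasi-isomorphism intertwines $\varphi_{\der}$ (built from $\At_{\mathfrak I_\CZ}$) with $\varphi_{\crit}$ -- is the entire content of the theorem, and it is left undone. Worse, the last sentence of your plan tries to close this gap by appealing to Theorem~\ref{main_thm}, and that appeal is not valid: the identity $\iota_{1,n}^\ast s_{1,n}^{\der}=s_{1,n}^{\crit}$ is an equality in $\HH^0(\CS^0_{\crit(f_{1,n})})$, and a d-critical structure only determines the local Darboux data and an almost perfect obstruction theory in the sense of \cite{KiemSavvas}; it does \emph{not} determine a global symmetric perfect obstruction theory up to isomorphism (this is precisely the phenomenon illustrated by \cite[Example~2.17]{Joyce1}). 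Two symmetric perfect obstruction theories can induce the same d-critical structure without being isomorphic. So equality of d-critical structures is consistent with the statement you want, but it cannot replace the explicit verification that the Atiyah class of the Koszul model maps to $\BL_{\crit(f_{1,n})}$ the same way the Hessian complex does. To complete your route you would need to actually carry out that computation (with all the sign, trace, and framing bookkeeping you mention), or invoke an Atiyah-class compatibility result along the lines of the references the paper uses.
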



\noindent\textbf{Conventions.}
\emph{We work over $\BC$ throughout. The `font' used in this paper for schemes, derived schemes, stacks and derived stacks will be $X$, $\boldit{X}$, $\CX$ and $\BCal{X}$ respectively. For a quasiprojective variety $X$, we let $\CM_X(n)$ denote the moduli stack of $0$-dimensional coherent sheaves of length $n$ on $X$. If $F$ is a coherent sheaf on $X$, we also set $\mathrm{Q}_{F,n} = \Quot_X(F,n)$, where the right hand side is Grothendieck's Quot scheme, parametrising quotients $F \onto E$ where $E$ is a $0$-dimensional sheaf of length $n$. When $(X,F)=(\BA^3,\OO^{\oplus r})$, we set $\CM_n=\CM_{\BA^3}(n)$ and $\mathrm{Q}_{r,n} = \Quot_{\BA^3}(\OO^{\oplus r},n)$.}

\subsection*{Acknowledgments}
We wish to thank Pierrick Bousseau, Ben Davison, Dominic Joyce, Dragos Oprea, Sarah Scherotzke and Nicolò Sibilla and Okke van Garderen for helpful discussions. We also thank the anonymous referee for their observations and suggestions.

\section{Background material}

\subsection{Shifted symplectic structures} \label{subsec:background_on_sss_and_d-crit}

Let $\cdga$ be the category of non-positively  graded commutative differential graded $\BC$-algebras. There is a spectrum functor $\dSpec \colon \cdga \to \dSt$ to the category of derived stacks (see \cite[Definition 2.2.2.14]{HAG-DAG} or \cite[Definition 4.2]{Toen_higher-derived}). An object of the form $\dSpec A$ is called an \emph{affine derived $\BC$-scheme}. Such objects provide the Zariski local charts for general \emph{derived} $\BC$-\emph{schemes}, see \cite[Section 4.2]{Toen_higher-derived}. An object $\BCal{X}$ in $\dSt$ is called a \emph{derived Artin stack} if it is $m$-geometric (cf.~\cite[Definition 1.3.3.1]{Toen_higher-derived}) for some $m$ and its `classical truncation' $t_0(\BCal{X})$ is an Artin stack (and not a higher stack). A derived Artin stack $\BCal{X}$ admits an \emph{atlas}, i.e.~a smooth surjective morphism $\boldit{U} \to \BCal{X}$ from a derived scheme. Every derived Artin stack $\BCal{X}$ has a \emph{cotangent complex} $\BL_{\BCal{X}}$ of finite cohomological amplitude in $[-m,1]$ and a dual tangent complex $\BT_{\BCal{X}}$. Both are objects in a suitable stable $\infty$-category $L_{\qcoh}(\BCal{X})$ (see \cite{Toen_higher-derived} or \cite{HAG-DAG} for its definition).

Shifted symplectic structures on derived Artin stacks were introduced by Pantev--To\"{e}n--Vaqui\'e--Vezzosi in \cite{PTVV}. The definition is given in the affine case first, and then generalised by showing the local notion satisfies smooth descent. We recall the local definition: let us set $\boldit{X} = \dSpec A$, so that $L_{\qcoh}(\boldit{X}) \cong \derived(\dgmod_A)$. For all $p\geq 0$ one can define the exterior power complex $(\Lambda^p \BL_{\boldit{X}},\dd) \in L_{\qcoh}(\boldit{X})$, where the differential $\dd$ is induced by the differential of the algebra $A$. For a fixed $k \in \BZ$, define a $k$-shifted $p$-form on $\boldit{X}$ to be an element $\omega^0 \in (\Lambda^p \BL_{\boldit{X}})^k$ such that $\dd \omega^0 = 0$. To define the notion of closedness, consider the de Rham differential $\dd_{\dR}\colon \Lambda^p \BL_{\boldit{X}} \to \Lambda^{p+1} \BL_{\boldit{X}}$. A $k$-\emph{shifted closed $p$-form} is a sequence $(\omega^0,\omega^1,\ldots)$, with $\omega^i \in (\Lambda^{p+i}\BL_{\boldit{X}})^{k-i}$, such that $\dd \omega^0 = 0$ and $\dd_{\dR} \omega^i + \dd \omega^{i+1} = 0$. When $p=2$, any $k$-shifted $2$-form $\omega^0 \in (\Lambda^2 \BL_{\boldit{X}})^k$ induces a morphism $\omega^0\colon \BT_{\boldit{X}} \to \BL_{\boldit{X}}[k]$ in $L_{\qcoh}(\boldit{X})$, and we say that $\omega^0$ is \emph{non-degenerate} if this morphism is an isomorphism in $L_{\qcoh}(\boldit{X})$.

\begin{definition}[{\cite[Definition 1.18]{PTVV}}]
A $k$-shifted closed $2$-form $\omega = (\omega^0,\omega^1,\ldots)$ is called a $k$-\emph{shifted symplectic structure} if $\omega^0$ is non-degenerate. We say that $(\boldit{X},\omega)$ is a $k$-shifted symplectic (affine) derived scheme.
\end{definition}

\subsection{d-critical schemes and Artin stacks}\label{subsec:d-crit-stacks}

Let $X$ be a scheme over $\BC$. Joyce \cite{Joyce1} proved the existence of a canonical sheaf of $\BC$-vector spaces $\CS_X$ such that for every triple $(R,V,i)$, where $R \subset X$ is an open subscheme, $V$ is a smooth scheme and $i\colon R \into V$ is a closed immersion with ideal $\mathscr I$, one has an exact sequence
\[
\begin{tikzcd}
  0 \arrow{r} 
& \CS_X\big|_R \arrow{r}
& \OO_V/\mathscr I^2 \arrow{r}{\dd}
& \Omega_V/\mathscr I\cdot \Omega_V,
\end{tikzcd}
\]
where the last map is induced by the exterior derivative; see \cite[Theorem 2.1]{Joyce1} for the full list of properties characterising $\CS_X$. Joyce also proved the existence of a subsheaf $\CS_X^0\subset \CS_X$ and a direct sum  decomposition
\[
\CS_X = \CS_X^0\oplus \BC_X,
\]
where $\BC_X$ is the constant sheaf on $X$ and, for any triple $(R,V,i)$ as above, one has
\[
\begin{tikzcd}
\CS^0_X\big|_R = \ker\, \bigl(
\CS_X\big|_R \arrow[hook]{r}
& \OO_V/\mathscr I^2 \arrow[two heads]{r} & \OO_{R_{\red}} \bigr) \,\subset \,\CS_X\big|_R.
\end{tikzcd}
\]

If $V$ is a smooth scheme carrying a regular function $f \in \OO(V)$ with critical locus $R=\crit(f) \subset V$, and $f|_{R_{\red}} = 0$, then $\mathscr I = (\dd f) \subset \OO_V$ and a natural element of $\HH^0(\CS_X^0|_R)$ is the section $f + (\dd f)^2$.

\begin{definition}[{\cite[Definition 2.5]{Joyce1}}]\label{def:d-critical_schemes}
A \emph{d-critical scheme} is a pair $(X,s)$, where $X$ is an ordinary scheme and $s$ is a section of $\CS_X^0$ with the following property: for every point $p \in X$ there is a quadruple $(R,V,f,i)$, called a \emph{d-critical chart}, where $R \into X$ is an open neighbourhood of $p$, $V$ is a smooth scheme, $f \in \OO(V)$ is a regular function such that $f|_{R_{\red}} = 0$, having critical locus $i\colon R \into V$, and $s|_R = f + (\dd f)^2 \in \HH^0(\CS_X^0|_R)$. The section $s$ is called a \emph{d-critical structure} on $X$.
\end{definition}

By \cite[Proposition 2.8]{Joyce1}, if $g\colon X \to Y$ is a smooth morphism of schemes and $t \in \HH^0(\CS^0_Y)$ is a d-critical structure on $Y$, then $g^\ast t \in \HH^0(\CS^0_X)$ is a d-critical structure on $X$.

For an Artin stack $\CX$, Joyce defined the sheaf $\CS_{\CX}^0$ by smooth descent \cite[Corollary 2.52]{Joyce1}. Recall that to give a sheaf $\mathscr F$ on $\CX$ one has to specify an \'etale sheaf $\mathscr F(U,u)$ for every smooth $1$-morphism $u\colon U \to \CX$ from a scheme, along with a series of natural compatibilities. Similarly, to give a section $s \in \HH^0(\mathscr F)$ is to give a collection of compatible sections $s(U,u)\in \HH^0(\mathscr F(U,u))$ for every smooth $1$-morphism $u\colon U \to \CX$ from a scheme.

Joyce defined a \emph{d-critical Artin stack} (see \cite[Definition 2.53]{Joyce1}) to be a pair $(\CX,s)$, where $\CX$ is a classical Artin stack, $s \in \HH^0(\CS_{\CX}^0)$ is a section such that $s(U,u)$ defines a d-critical structure on $U$ (being a section of $\CS^0_{\CX}(U,u) = \CS_U^0$) according to \Cref{def:d-critical_schemes}, for every smooth $1$-morphism $u\colon U \to \CX$.

If $G$ is an algebraic group acting on a scheme $Y$, the sheaf $\CS^0_Y$ is naturally $G$-equivariant, so there is a well-defined subspace $\HH^0(\CS^0_Y)^G \subset \HH^0(\CS^0_Y)$ of $G$-invariant sections. If $\CX = [Y/G]$, then $\HH^0(\CS^0_{\CX}) = \HH^0(\CS^0_Y)^G$, and the d-critical structures on $\CX$ are canonically identified with the $G$-invariant d-critical structures on $Y$, see \cite[Example 2.55]{Joyce1}. We will make this identification throughout without further mention.

\begin{theorem}[{\cite[Theorem 3.18]{BBBJ}}]\label{thm:truncation}
Let $(\BCal{X},\omega)$ be a $-1$-shifted symplectic derived Artin stack. Then the underlying classical Artin stack $\CX = t_0(\BCal{X})$ extends in a canonical way to a d-critical Artin stack $(\CX,s_{\omega})$. This defines a `truncation functor' $\tau$, as in \eqref{eqn:truncation}, from the $\infty$-category of $-1$-shifted symplectic derived Artin stacks to the $2$-category of d-critical Artin stacks.
\end{theorem}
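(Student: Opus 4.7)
The plan is to construct $s_\omega$ locally on an atlas and then invoke smooth descent of the sheaf $\CS^0_\CX$ (\cite[Corollary 2.52]{Joyce1}) to obtain a well-defined global section. First I would reduce to the derived scheme case: given a smooth atlas $u\colon\boldit{U}\to\BCal{X}$ by a derived scheme $\boldit{U}$, once a canonical d-critical structure has been constructed on $U = t_0(\boldit{U})$ out of $u^\ast\omega$ and shown to pull back correctly under smooth morphisms of $-1$-shifted symplectic derived schemes, the collection $\{s(U,u)\}$ assembles into a section of $\CS^0_\CX$ by smooth descent, and functoriality of $\tau$ is automatic from the canonical nature of the construction.

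For a $-1$-shifted symplectic derived scheme $(\boldit{X},\omega)$, I would invoke the Darboux-type local structure theorem: $\boldit{X}$ admits a Zariski cover by open derived subschemes equivalent, with their induced symplectic form, to derived critical loci $(\BRcrit(f_\alpha),\omega_{f_\alpha})$ for some $f_\alpha\in\OO(V_\alpha)$ on smooth schemes $V_\alpha$. Writing $R_\alpha\subset X$ for the image of $t_0(\BRcrit(f_\alpha))\cong\crit(f_\alpha)$, define on each chart the candidate local section
\[
s_\alpha \,=\, f_\alpha + (\dd f_\alpha)^2 \,\in\, \HH^0\!\left(\CS^0_X\big|_{R_\alpha}\right)
\]
as in \eqref{eqn:critical_s}. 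The main obstacle is to show the $s_\alpha$ glue into a single element $s_\omega\in\HH^0(\CS^0_X)$. Two Darboux charts $(V_\alpha,f_\alpha)$ and $(V_\beta,f_\beta)$ around a common point of $X$ are generally not directly related, so I would appeal to the stabilisation/comparison statement: after replacing $(V_\alpha,f_\alpha)$ by $(V_\alpha\times\BA^m,\,f_\alpha+\sum x_i^2)$ and doing the same for $\beta$, the two stabilised charts become étale-locally isomorphic as critical charts in a manner compatible with the symplectic form. A short direct verification, using Joyce's characterisation of $\CS^0_X|_R$ as a subsheaf of $\OO_V/\mathscr I^2$, then shows that adding a non-degenerate quadratic form to $f$ leaves the class $f+(\dd f)^2$ unchanged in $\CS^0_X$, so $s_\alpha$ and $s_\beta$ agree on $R_\alpha\cap R_\beta$. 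Independence of the Darboux presentation follows by the same argument.

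Finally, to extend to derived Artin stacks $\BCal{X}$, the subtlety is that $\omega$ cannot be restricted to a symplectic form on a smooth atlas $\boldit{U}\to\BCal{X}$, only to a closed $-1$-shifted $2$-form. I would bypass this via the equivariant form of the Darboux theorem: any point of $\BCal{X}$ admits a neighbourhood étale-equivalent to $[\boldit{Y}/G]$ with $\boldit{Y}$ a $G$-equivariant $-1$-shifted symplectic derived scheme, whose truncation $t_0(\boldit{Y})$ admits $G$-equivariant Darboux charts $(V_\alpha,f_\alpha)$. The resulting $G$-invariant section defines, via \cite[Example 2.55]{Joyce1}, a d-critical structure on $[t_0(\boldit{Y})/G]$, and these local pieces descend to the desired $s_\omega\in\HH^0(\CS^0_\CX)$. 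The technical heart of the argument, and the hardest single step, is the stabilised comparison between two Darboux presentations carried out in the scheme case; everything else is formal once smooth descent of $\CS^0$ is in place.
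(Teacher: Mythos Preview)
The paper does not prove this theorem at all: it is quoted verbatim from \cite[Theorem 3.18]{BBBJ}, with a pointer to \cite[Theorem 6.6]{BBJ} for the scheme version, and no argument is supplied. So there is nothing to compare against in the paper itself.

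That said, your sketch is broadly in the spirit of the actual proofs in \cite{BBJ,BBBJ}. The scheme case is essentially as you describe: the $-1$-shifted Darboux theorem of \cite{BBJ} gives local critical charts, and the key technical input is the comparison/stabilisation result allowing two such charts to be matched after adding quadratic terms, which is exactly what Joyce's theory of $\CS^0_X$ is designed to absorb. Your identification of this as the hardest step is accurate.

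For the stack case your outline diverges from what \cite{BBBJ} actually does. You propose to present $\BCal{X}$ \'etale-locally as a quotient $[\boldit{Y}/G]$ with $\boldit{Y}$ a $G$-equivariant $-1$-shifted symplectic derived scheme admitting equivariant Darboux charts. Such a presentation is not available in general, and \cite{BBBJ} does not argue this way. Instead they work directly with a smooth atlas $\boldit{U}\to\BCal{X}$: the pullback $u^\ast\omega$ is only a closed $2$-form, but they prove a relative Darboux theorem showing that $\boldit{U}$ is locally modelled on $\BRcrit(f)$ for $f$ on a smooth scheme over a base encoding the fibres of $u$, and that the resulting section of $\CS^0_U$ is independent of choices and compatible with refinement of atlases. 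This is more delicate than the equivariant shortcut you suggest, and is where the genuine work in the stacky extension lies.
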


See also \cite[Theorem 6.6]{BBJ} for the analogous result proved for schemes.

\subsection{Symmetric obstruction theories}
Let $M$ be a $\BC$-scheme with full cotangent complex $L_M^\bullet \in \derived^{(-\infty,0]}(\QCoh_M)$, and let $\BL_M \in \derived^{[-1,0]}(\QCoh_M)$ denote its cutoff at $-1$. A \emph{perfect obstruction theory} on $M$, as defined by Behrend--Fantechi \cite{BFinc}, is a pair $(\BE,\phi)$, where $\BE$ is a perfect complex of perfect amplitude contained in $[-1,0]$, and $\phi\colon \BE \to \BL_M$ a morphism in the derived category, such that $h^0(\phi)$ is an isomorphism and $h^{-1}(\phi)$ is onto. A perfect obstruction theory $(\BE,\phi)$ is called \emph{symmetric} if there exists an isomorphism $\vartheta\colon \BE \simto \BE^\vee[1]$ such that $\vartheta^\vee[1] = \vartheta$. See \cite{BFHilb} for background on symmetric obstruction theories.

If $(\boldit{M},\omega)$ is a $-1$-shifted symplectic derived scheme, with underlying classical scheme $i\colon M \into \boldit{M}$. Then $\BE = i^\ast \BL_{\boldit{M}} \to \BL_{M}$ is a perfect obstruction theory, which is furthermore symmetric thanks to the non-degenerate pairing $\vartheta = i^\ast \omega^0$. This association explains the left vertical arrow in \Cref{Fig:Joyce_picture}.

\subsection{Smoothness of the forgetful map}\label{subsec:smoothness_forgetful}

Let $X$ be a quasiprojective variety, $n\geq 0$ an integer and $F \in \Coh(X)$ a coherent sheaf on $X$. Set $\mathrm{Q}_{F,n} = \Quot_X(F,n)$ and let $\pi\colon \mathrm{Q}_{F,n} \times X \to X$ be the projection. Forgetting the surjection defining the universal quotient
\[
[\pi^\ast F \onto \mathscr Q]\,\,\, \mapsto\,\,\, \mathscr Q
\]
defines a map $q_{F,n}\colon \mathrm{Q}_{F,n} \to \CM_X(n)$ to the moduli stack of length $n$ coherent sheaves on $X$. Such a map exists since $\mathscr Q \in \Coh(\mathrm{Q}_{F,n} \times X)$ is flat over $\mathrm{Q}_{F,n}$ by definition of the Quot scheme. We call $q_{F,n}$ the \emph{forgetful morphism} throughout.

We will need the following result.
\begin{prop}\label{prop:smoothness_forgetful}
Let $X$ be a reduced projective variety, $F$ a locally free sheaf of rank $r\geq 1$ on $X$. For every $n \geq 0$, the forgetful morphism
\[
q_{F,n}\colon \mathrm{Q}_{F,n} \to \CM_X(n)
\]
is smooth of relative dimension $rn$.
\end{prop}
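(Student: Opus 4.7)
My plan is to exhibit $\mathrm{Q}_{F,n}$ as a Zariski-open substack of a rank-$rn$ vector bundle over $\CM_X(n)$, from which the claim follows immediately. Smoothness of a morphism of Artin stacks whose source is a scheme can be checked after pulling back along an arbitrary test scheme $u\colon T \to \CM_X(n)$. Such a $u$ corresponds to a $T$-flat coherent sheaf $\mathscr E_T$ on $X\times T$ whose fibres over $T$ have length $n$, and the universal property of the Quot scheme identifies the pullback $\mathrm{Q}_{F,n} \times_{\CM_X(n)} T \to T$ with the moduli of surjections $F_T \twoheadrightarrow \mathscr E_T$, which sits as an open subfunctor inside the relative $\Hom$ functor $S \mapsto \Hom_{X_S}(F_S, \mathscr E_S)$.

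The main step is to identify this relative $\Hom$ functor with a rank-$rn$ vector bundle on $T$. Let $\pi\colon X \times T \to T$ be the projection. The sheaf $\lHom(F_T, \mathscr E_T) \cong F_T^\vee \otimes \mathscr E_T$ is coherent, $T$-flat, and has $0$-dimensional support on every fibre of $\pi$; in particular $R^i\pi_\ast(F_T^\vee \otimes \mathscr E_T) = 0$ for $i > 0$ by dimension reasons. Cohomology and base change then guarantees that $\pi_\ast(F_T^\vee \otimes \mathscr E_T)$ is locally free on $T$ and commutes with arbitrary base change, with rank computable fibrewise as $\dim_{\BC}\HH^0(X_t, F^\vee|_{X_t} \otimes \mathscr E_t) = rn$, since $F^\vee \otimes \mathscr E_t$ has length $rn$. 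Its total space is thus a vector bundle of rank $rn$ over $T$ representing the relative $\Hom$.

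Finally, the surjectivity locus inside this vector bundle is Zariski-open, since the support of the cokernel of a morphism of coherent sheaves is closed. Consequently $\mathrm{Q}_{F,n} \times_{\CM_X(n)} T$ is a Zariski-open subscheme of a rank-$rn$ vector bundle over $T$, and hence smooth of relative dimension $rn$ over $T$. Since this holds for every test scheme $T$, the morphism $q_{F,n}$ is smooth of relative dimension $rn$. I do not anticipate a genuine obstacle: the only point requiring care is the vanishing of higher direct images used to invoke cohomology and base change, and this is automatic because $F$ is locally free while $\mathscr E_T$ has $0$-dimensional support on fibres, so no deformation-theoretic computation involving $\Ext^1(E,E)$ versus $\Ext^i(F,E)$ is needed.
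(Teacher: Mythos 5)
Your proposal is correct and follows the same overall architecture as the paper's proof: exhibit $\mathrm{Q}_{F,n} \times_{\CM_X(n)} T$ as a Zariski-open subscheme of an affine bundle of rank $rn$ over the test scheme $T$. The difference lies in how the affine bundle structure is produced. The paper routes the argument through Grothendieck's abstract linear-representability theorem from EGA III: for a flat family $E$ on $X\times B$ one has a coherent sheaf $\mathscr Q$ with $f_\ast(E\otimes\mathscr M)\cong\lHom(\mathscr Q,\mathscr M)$, the relative $\Hom$ functor is $\Spec\Sym\mathscr Q$, and $\mathscr Q$ is locally free when the relevant cohomological flatness holds --- which is where the paper's reducedness hypothesis enters. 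You instead apply cohomology and base change directly to the sheaf $F_T^\vee\otimes\mathscr E_T$: since it is $T$-flat with $0$-dimensional fibrewise support, all higher direct images vanish for dimension reasons, so $\pi_\ast(F_T^\vee\otimes\mathscr E_T)$ is locally free of rank $rn$, commutes with base change, and its total space represents the relative $\Hom$ functor. This is more elementary and more explicit: you identify the bundle concretely as a pushforward, and you sidestep any discussion of cohomological flatness, so your argument in fact works without the reducedness hypothesis on $X$. The only point worth flagging is that the total space of a locally free sheaf $\mathscr L$ is $\Spec\Sym\mathscr L^\vee$, whose $S$-points are $\HH^0(S,\mathscr L_S)$; since base change holds, this is indeed $\Hom_{X_S}(F_S,\mathscr E_S)$, so the identification you claim is correct, but it is worth stating the dual convention explicitly.
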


Before proving the result, we recall a classical result by Grothendieck. According to \cite[Th\'eor\`eme~$7.7.6$]{EGA32} (but see also \cite[Theorem 5.7]{Nit}), if $f\colon Y\to B$ is a proper morphism to a locally noetherian scheme $B$, and $E$ is a coherent $B$-flat sheaf on $Y$, there exists a coherent sheaf $\mathscr Q_E$ on $B$ along with functorial isomorphisms
\[
\begin{tikzcd}
\eta\colon f_\ast (E\otimes_{\OO_B}\mathscr M)\arrow{r}{\sim} & \lHom_{\OO_B}(\mathscr Q_E,\mathscr M)
\end{tikzcd}
\]
for all quasicoherent sheaves $\mathscr M$ on $B$.
The sheaf $\mathscr Q_E$ is unique up to a unique isomorphism, it behaves well with respect to pullback, and moreover it is locally free exactly when $f$ is cohomologically flat in dimension $0$ \cite[Proposition 7.8.4]{EGA32}. For instance, any proper flat morphism with geometrically reduced fibres is cohomologically flat in dimension $0$ \cite[Proposition 7.8.6]{EGA32}.

\begin{proof}[Proof of Proposition~\ref{prop:smoothness_forgetful}]
Let $\CX_{F,n}$ be the stack of pairs $(E,\alpha)$, where $E$ is a $0$-dimensional sheaf of length $n$ and $\alpha$ is an $\OO_X$-linear homomorphism $\alpha\colon F \to E$. Then we have an open immersion $\mathrm{Q}_{F,n} \into \CX_{F,n}$ and the morphism $q_{F,n}$ extends to a morphism
\[
\pi_{F,n}\colon \CX_{F,n} \to \CM_X(n).
\]
Let $B$ be a scheme. By standard methods, we may reduce to the case where $B$ is locally noetherian. Given a map $B \to \CM_X(n)$, let us consider the fibre products
\[
\begin{tikzcd}
P\MySymb{dr} \arrow{r}\arrow{d} & \mathrm{Q}_{F,n}\arrow{d}{q_{F,n}}\\
B \arrow{r} &  \CM_X(n)
\end{tikzcd}
\qquad \text{and} \qquad
\begin{tikzcd}
V\MySymb{dr} \arrow{r}\arrow{d} &  \CX_{F,n}\arrow{d}{\pi_{F,n}}\\
B \arrow{r} &  \CM_X(n)
\end{tikzcd}
\]
where $B \to \CM_X(n)$ corresponds to a $B$-flat family of $0$-dimensional sheaves $E \in \Coh(X \times B)$. The associated sheaf $\mathscr Q_E \in \Coh(B)$ is locally free of rank $rn$, since the projection $f\colon X\times B \to B$ is cohomologically flat in dimension $0$ since it is proper with reduced fibres. Moreover, the projection $V \to B$ is canonically isomorphic to the structure morphism
\[
\Spec \Sym_{\OO_B}\mathscr Q_E \to B,
\]
because of Grothendieck's theorem \cite[Cor.~7.7.8, Rem.~7.7.9]{EGA32}, which says the following: Let $f\colon Y\to B$ be a projective morphism, $\mathscr F$ and $\mathscr E$ two coherent sheaves on $Y$. Consider the functor $\Sch_B^{\op}\to \Sets$ sending a $B$-scheme $T\to B$ to the set of morphism $\Hom_{Y_T}(\mathscr F_T,\mathscr E_T)$, where $\mathscr F_T$ and $\mathscr E_T$ are the pullbacks of $\mathscr F$ and $\mathscr E$ along the projection $Y_T=Y\times_BT\to Y$. Then, if $\mathscr E$ is flat over $B$, the above functor is represented by a linear scheme $\Spec \Sym_{\OO_B} \mathscr H\to B$, where $\mathscr H$ is a coherent sheaf on $B$. However, in our case we have $Y = X\times B$, $f=p_2$ the second projection and $\mathscr F=p_1^\ast F$, and the functor described above is precisely the functor of points of $V$, thus we must have $\mathscr H = \mathscr Q_E$. More details can be found in the proof of Grothendieck's result found in \cite[Theorem 5.8]{Nit}.

Therefore, since $P \into V$ is open and $V \to B$ is an affine bundle of rank $rn$, the projection $P\to B$ is smooth of relative dimension $rn$.
\end{proof}

\begin{remark}
Let $X^\circ \into X$ be an open subscheme, where $X$ is a reduced projective variety as in \cref{prop:smoothness_forgetful}, and let $F^\circ = F|_{X^\circ}$ be the restriction of a locally free sheaf $F$ over $X$. Then 
\[
q_{F^\circ,n}\colon \Quot_{X^\circ}(F^\circ,n) \to \CM_{X^\circ}(n)
\]
is again smooth, being the pullback of the smooth morphism $q_{F,n}\colon \mathrm{Q}_{F,n} \to \CM_X(n)$ along the open immersion $\CM_{X^\circ}(n) \into \CM_X(n)$. This applies for instance to $(X,F,X^\circ) = (\BP^3,\OO^{\oplus r},\BA^3)$. Thus we get the next corollary.
\end{remark}

\begin{corollary}\label{smoothness_forgetting_framings}
The forgetful map $q_{r,n}\colon \Quot_{\BA^3}(\OO^{\oplus r},n) \to \CM_n$ is smooth of relative dimension $rn$.
\end{corollary}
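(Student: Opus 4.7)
The plan is to deduce this corollary as a direct application of \Cref{prop:smoothness_forgetful} together with the preceding remark, by taking the triple $(X,F,X^\circ) = (\BP^3, \OO_{\BP^3}^{\oplus r}, \BA^3)$. Indeed, $\BP^3$ is a reduced projective variety and $\OO_{\BP^3}^{\oplus r}$ is locally free of rank $r$, so \Cref{prop:smoothness_forgetful} applies and produces a smooth forgetful morphism
\[
q_{\OO_{\BP^3}^{\oplus r},n}\colon \mathrm{Q}_{\OO_{\BP^3}^{\oplus r},n} \to \CM_{\BP^3}(n)
\]
of relative dimension $rn$. The restriction of $\OO_{\BP^3}^{\oplus r}$ to the open subscheme $\BA^3 \subset \BP^3$ is precisely $\OO_{\BA^3}^{\oplus r}$, so the hypotheses of the remark are satisfied.

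It then remains to observe that pushforward along the open immersion $j\colon \BA^3 \into \BP^3$ identifies $\CM_n = \CM_{\BA^3}(n)$ with the open substack of $\CM_{\BP^3}(n)$ parametrising $0$-dimensional sheaves whose support is contained in $\BA^3$, and likewise $\mathrm{Q}_{r,n} = \Quot_{\BA^3}(\OO_{\BA^3}^{\oplus r},n)$ is canonically an open subscheme of $\mathrm{Q}_{\OO_{\BP^3}^{\oplus r},n}$. Concretely, $q_{r,n}$ fits in a cartesian square
\[
\begin{tikzcd}
\mathrm{Q}_{r,n}\MySymb{dr} \arrow{r}\arrow[swap]{d}{q_{r,n}} & \mathrm{Q}_{\OO_{\BP^3}^{\oplus r},n} \arrow{d}{q_{\OO_{\BP^3}^{\oplus r},n}} \\
\CM_n \arrow[hook]{r} & \CM_{\BP^3}(n)
\end{tikzcd}
\]
and hence $q_{r,n}$ is the pullback of a smooth morphism of relative dimension $rn$ along an open immersion. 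Since smoothness and the relative dimension are preserved under arbitrary base change, the corollary follows. No serious obstacle is expected here, as every ingredient has been established in the preceding paragraphs; the only mild point to verify is the standard identification of the open substack of $\CM_{\BP^3}(n)$ with $\CM_n$, which is immediate from the fact that the support of a length $n$ sheaf on $\BP^3$ contained in $\BA^3$ is an open condition in flat families.
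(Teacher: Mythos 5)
Your proof is correct and follows exactly the same route as the paper: the remark preceding the corollary already sets up the pullback of $q_{F,n}$ along the open immersion $\CM_{X^\circ}(n)\into\CM_X(n)$ and explicitly points to the triple $(X,F,X^\circ)=(\BP^3,\OO^{\oplus r},\BA^3)$ as the case that yields the corollary. Your cartesian square merely spells out the base-change argument that the remark leaves implicit.
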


Note that, for fixed $[E] \in \CM_n$, the choice of a surjection $\OO^{\oplus r}\onto E$ is nothing but the datum of $r$ (general enough) sections $\sigma_1,\ldots,\sigma_r \in \Hom(\OO,E) = \HH^0(E) = \BC^n$. So the fibre of $q_{r,n}$ over $[E]$ is an open subset of $\BC^{rn}$.

\section{Proof of Theorem \ref{main_thm}}\label{sec:proof_main_result}

In this section we prove \Cref{main_thm} (see \Cref{main_thm_BODY}) granting the (fundamental) auxiliary result \Cref{thm:comparison_d-crit_1}, which will be proved in \Cref{thm:comparison_d-crit}.

The moduli stack $\CM_n$ of $0$-dimensional coherent sheaves on length $n$ on $\BA^3$ can be seen as a stack of representations of the Jacobi algebra of a quiver with potential. Indeed, consider the $3$-loop quiver $L_3$ (\Cref{fig:3loop_quiver}), equipped with the potential $W=A[B,C]$.

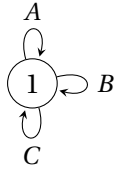
\begin{figure}[ht]
\centering
\begin{tikzpicture}[>=stealth,->,shorten >=2pt,looseness=.5,auto]
  \matrix [matrix of math nodes,
           column sep={3cm,between origins},
           row sep={3cm,between origins},
           nodes={circle, draw, minimum size=5.5mm}]
{ 
|(B)| 1 \\         
};
\tikzstyle{every node}=[font=\small\itshape]
\path[->] (B) edge [loop above] node {$A$} ()
              edge [loop right] node {$B$} ()
              edge [loop below] node {$C$} ();
\end{tikzpicture}
\caption{The $3$-loop quiver $L_3$.}\label{fig:3loop_quiver}
\end{figure}

\begin{notation}
Fix a quiver $Q = (Q_0,Q_1,s,t)$, where $Q_0$ is the vertex set, $Q_1$ is the edge set, $s$ and $t$ are the source and target maps $Q_1 \to Q_0$ respectively; for a dimension vector $\boldit{d} = (\boldit{d}_i)_i \in \BN^{Q_0}$, we let $\Rep_{\boldit{d}}(Q)$ denote the space of $\boldit{d}$-dimensional representations of $Q$, namely the affine space $\prod_{a \in Q_1}\Hom_{\BC}(\BC^{\boldit{d}_{s(a)}},\BC^{\boldit{d}_{t(a)}})$. The group $\GL_{\boldit{d}}=\prod_{i\in Q_0}\GL_{\boldit{d}_i}$ acts on $\Rep_{\boldit{d}}(Q)$ by conjugation; the moduli stack of $\boldit{d}$-dimensional representations is the quotient stack $\mathfrak M_{\boldit{d}}(Q) = [\Rep_{\boldit{d}}(Q)/\GL_{\boldit{d}}]$.
\end{notation}

The critical locus of the $\GL_n$-invariant regular function 
\begin{equation} \label{tr W}
f_n = (\Tr W)_n\colon \Rep_n(L_3) \to \BC, \qquad (A,B,C) \mapsto \Tr A[B,C],
\end{equation}
defines a closed $\GL_n$-invariant subscheme $U_n$ of the $3n^2$-dimensional affine space $\Rep_n(L_3) \cong \End_{\BC}(\BC^n)^{3}$ parametrising triples of \emph{pairwise commuting} endomorphisms. There is an isomorphism of Artin stacks
\begin{equation}\label{critical_Mn}
\begin{tikzcd}
\iota_n\colon [U_n / \GL_n] \arrow{r}{\sim} & \CM_n
\end{tikzcd}
\end{equation}
defined on closed points by sending the $\GL_n$-orbit $[A,B,C]$ of a triple $(A,B,C) \in U_n$ to the point $[E]$ represented by the direct sum $E=\OO_{p_1}\oplus \cdots \oplus \OO_{p_n}$, where the points $p_i$ are not necessarily distinct, and are determined by the diagonal entries of the matrices $A$ (for the $x$-coordinate), $B$ (for the $y$-coordinate) and $C$ (for the $z$-coordinate). Indeed, the matrices can be simultaneously put in upper triangular form precisely because they pairwise commute, and we are working modulo $\GL_n$. 
The source of the isomorphism $\iota_n$ is equipped with the algebraic d-critical structure
\[
s_n^{\crit} = f_n + (\dd f_n)^2 \in \HH^0\left(\CS^0_{[U_n / \GL_n]}\right)=\HH^0\left(\CS^0_{U_n}\right)^{\GL_n}.
\]
On the other hand, Brav and Dyckerhoff \cite{Brav-Dyckerhoff-II} proved that there is a $-1$-shifted symplectic structure $\omega_n$ on the derived Artin stack $\BCal{M}_n$ of $0$-dimensional coherent sheaves of length $n$ on $\BA^3$ (see Subsection~\ref{derived_stack_M_n} for details). We denote by
\begin{equation} \label{s_n der}
   (\CM_n,s_n^{\der}) = \tau(\BCal{M}_n,\omega_n) 
\end{equation} 
its truncation, defined through the `truncation functor' $\tau$ recalled in \Cref{thm:truncation}.

The following result, granted for now, will be proved in \Cref{subsec:d-crit-agree}.
\begin{theorem}
\label{thm:comparison_d-crit_1}
The isomorphism \eqref{critical_Mn} induces an identity 
\[
\iota_n^\ast s_n^{\der} = s_n^{\crit}
\]
of algebraic d-critical structures.
\end{theorem}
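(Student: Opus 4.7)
The strategy is to realise both d-critical structures as truncations of the same $-1$-shifted symplectic derived Artin stack. Concretely, I would lift the classical isomorphism $\iota_n$ in \eqref{critical_Mn} to an equivalence
\[
\boldit{\iota}_n\colon \bigl([\BRcrit(f_n)/\GL_n],\, \omega_{f_n}\bigr) \xrightarrow{\sim} (\BCal{M}_n,\, \omega_n)
\]
of $-1$-shifted symplectic derived Artin stacks, where $\omega_{f_n}$ is the canonical $-1$-shifted symplectic form on the derived critical locus of $f_n$ on $\Rep_n(L_3)$, taken $\GL_n$-equivariantly. Granted such a lift, the identity $\iota_n^\ast s_n^{\der} = s_n^{\crit}$ is formal: it follows from functoriality of the truncation functor $\tau$ of \Cref{thm:truncation}, together with the local computation $\tau(\BRcrit(f), \omega_f) = (\crit(f), f + (\dd f)^2)$ from \eqref{eqn:critical_s}, applied $\GL_n$-equivariantly.

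The underlying equivalence of classical stacks $t_0(\boldit{\iota}_n) = \iota_n$ is already in hand. The derived enhancement reflects Koszul / Morita duality between the $3$-Calabi--Yau dg category $\Coh(\BA^3)$ and perfect modules over the Ginzburg dg algebra of the $3$-loop quiver with potential $W = A[B,C]$: the derived moduli stack of $n$-dimensional modules over this dg algebra is precisely $[\BRcrit(f_n)/\GL_n]$. This identification is implicit in Brav--Dyckerhoff's construction of $\omega_n$.

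The main technical obstacle is verifying that $\boldit{\iota}_n$ respects the $-1$-shifted symplectic forms. The form $\omega_n$ is produced by PTVV's AKSZ-type construction from the $3$-Calabi--Yau structure on $\BA^3$, whereas $\omega_{f_n}$ comes directly from the classical symplectic structure on $T^\ast\Rep_n(L_3)$. Matching them requires tracking how the relevant cyclic $3$-CY pairings transport under Koszul duality, and invoking functoriality of PTVV's construction under $3$-CY equivalences of dg categories. One can approach this concretely by working on the smooth atlas $\Rep_n(L_3) \to [\Rep_n(L_3)/\GL_n]$ and comparing the two forms in local Darboux coordinates, where both presentations of $\BCal{M}_n$ are explicit. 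Once this compatibility is established, the comparison of d-critical structures on $\CM_n$ follows immediately by applying $\tau$.
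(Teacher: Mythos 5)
Your approach is precisely the ``alternative, stronger approach'' that the paper itself raises and then deliberately sets aside in \Cref{subsec:d-crit-agree} (``Some remarks on the proof of \Cref{thm:comparison_d-crit_1}''). The authors observe that the truncation of $\boldsymbol{\iota}_n$ is $\iota_n$, that one can write an explicit $-1$-shifted symplectic form $\omega = (\omega^0,0,0,\ldots)$ on $[\dSpec (Q_3)_n / \GL_n]$ combining the forms on $\BRcrit(f_n)$ and $\mathrm{B}\GL_n$ whose truncation is $s_n^{\crit}$, and that the theorem would follow from the \emph{stronger} claim $\boldsymbol{\iota}_n^\ast \omega_n = \omega$ --- exactly your plan. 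They state they believe this but decline to prove it, precisely because of the technicalities you flag and then do not resolve.

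This is where the genuine gap lies. Your proposal names the core difficulty --- ``verifying that $\boldit{\iota}_n$ respects the $-1$-shifted symplectic forms'' --- and then offers only a research programme for it (``tracking how the relevant cyclic $3$-CY pairings transport under Koszul duality'', ``invoking functoriality of PTVV's construction under $3$-CY equivalences'', ``one can approach this concretely by \dots''), never an argument. Nothing in the cited sources gives this functoriality off the shelf: PTVV's mapping-stack construction requires a compact source, so the form $\omega_n$ on $\BCal{M}_n$ here comes from Brav--Dyckerhoff's relative/noncompact Calabi--Yau machinery, and a clean comparison theorem between that and the derived-critical-locus form, equivariantly over a reductive group, is not in the literature. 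Until that identity of shifted $2$-forms (including the higher pieces $\omega^1, \omega^2, \ldots$, not just $\omega^0$) is actually established, the appeal to functoriality of $\tau$ has nothing to act on.

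By contrast, the paper sidesteps this entirely by proving only what it needs: it produces explicit local d-critical charts on both sides --- for $s_n^{\crit}$ via Luna's \'etale slice theorem at a polystable point and a direct Hessian computation (\Cref{etale d-critical chart s_n^crit}), and for $s_n^{\der}$ via formality of $\RHom(E,E)$ and a formal Darboux chart from the Chevalley--Eilenberg algebra (\Cref{lemma:dg}, \Cref{formal etale d-critical chart s_n^der}) --- then matches them at the formal level and algebraizes via a faithfully-flat-descent argument. That route never compares shifted symplectic forms globally, only the induced sections of $\CS^0$. If you want to salvage your approach, the missing content is a proof of $\boldsymbol{\iota}_n^\ast \omega_n = \omega$; if you want a complete proof with the tools at hand, you should switch to the local Darboux comparison.
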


Now, for a fixed integer $r\geq 1$, let us consider the Quot scheme $\mathrm{Q}_{r,n} = \Quot_{\BA^3}(\OO^{\oplus r},n)$. As we now recall, this Quot scheme is a global critical locus, or, in other words, it admits a d-critical structure consisting of a single d-critical chart. This result in the case $r=1$ is due to Szendr\H{o}i \cite[Theorem 1.3.1]{MR2403807}. The `$r$-framed $3$-loop quiver' $\widetilde{L}_3$ (\Cref{fig:3loop_framed}) plays a crucial role.

\begin{figure}[ht]
\centering
\begin{tikzpicture}[>=stealth,->,shorten >=2pt,looseness=.5,auto]
  \matrix [matrix of math nodes,
           column sep={3cm,between origins},
           row sep={3cm,between origins},
           nodes={circle, draw, minimum size=7.5mm}]
{ 
|(A)| \infty & |(B)| 1 \\         
};
\tikzstyle{every node}=[font=\small\itshape]
\path[->] (B) edge [loop above] node {$A$} ()
              edge [loop right] node {$B$} ()
              edge [loop below] node {$C$} ();

\node [anchor=west,right] at (-0.15,0.11) {$\vdots$};

\draw (A) to [bend left=25,looseness=1] (B) node [midway,above] {};
\draw (A) to [bend left=40,looseness=1] (B) node [midway] {};
\draw (A) to [bend right=35,looseness=1] (B) node [midway,below] {};
\end{tikzpicture}
\caption{The $r$-framed $3$-loop quiver $\widetilde{L}_3$.}\label{fig:3loop_framed}
\end{figure}
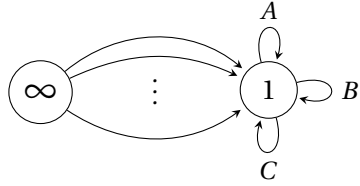

\begin{theorem}[{\cite[Theorem 2.6]{BR18}}]
\label{critical_quot}
Fix a vector $\theta = (\theta_1,\theta_2)\in \BR^2$, with $\theta_1>\theta_2$. Let 
\[
\NCQuot^n_r = \Rep_{(1,n)}^{\theta-\mathrm{st}}(\widetilde{L}_3) / \GL_n 
\]
be the moduli space of $\theta$-stable representations of the $r$-framed $3$-loop quiver $\widetilde L_3$, and consider the regular function $f_{r,n}\colon \NCQuot^n_r \to \BA^1$ induced by the potential $W=A[B,C]$. Then there is an isomorphism 
\[
\begin{tikzcd}
\iota_{r,n}\colon \crit(f_{r,n}) \arrow{r}{\sim} & \mathrm{Q}_{r,n}.
\end{tikzcd}
\]
\end{theorem}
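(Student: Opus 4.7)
The plan is to identify both sides with the moduli of \emph{cyclically framed commuting triples} of $n \times n$ matrices modulo $\GL_n$. First I unpack the quiver data. A $(1,n)$-dimensional representation of $\widetilde{L}_3$ is a tuple $(A,B,C,v_1,\ldots,v_r) \in \End_\BC(\BC^n)^3 \times (\BC^n)^r$; after rigidifying the $\BC^\ast$ at the framing vertex $\infty$, only $\GL_n$ acts, by simultaneous conjugation on $(A,B,C)$ and by the defining representation on the $v_i$. With $\theta_1 > \theta_2$, the $\theta$-stability condition becomes the cyclicity condition that no proper $(A,B,C)$-invariant subspace of $\BC^n$ contains all the $v_i$. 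The function $\Tr(A[B,C])$ is $\GL_n$-invariant by cyclicity of trace and so descends to $f_{r,n}$ on $\NCQuot^n_r$.

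Second, I compute the critical scheme and construct the morphism. The framings $v_i$ do not appear in the potential, so $\partial f_{r,n}/\partial v_i$ imposes no conditions; differentiating $\Tr(ABC - ACB)$ in the entries of the three matrices, and using cyclicity of trace, yields exactly the matrix equations $[B,C] = [C,A] = [A,B] = 0$. Thus $\crit(f_{r,n})$ is scheme-theoretically the $\GL_n$-quotient of the closed subscheme of $\Rep_{(1,n)}^{\theta\textrm{-st}}(\widetilde{L}_3)$ on which the three loops pairwise commute. To define $\iota_{r,n}$ via universal properties, on a test scheme $T$, a family of $\GL_n$-orbits of such stable commuting tuples turns $\OO_T^{\oplus n}$ into a $T$-flat length-$n$ coherent sheaf $E$ on $\BA^3 \times T$ (via the three commuting endomorphisms viewed as multiplication by the coordinates), and the $v_\bullet$ assemble into a map $\OO_{\BA^3 \times T}^{\oplus r} \to E$ surjective by cyclicity, giving a $T$-point of $\mathrm{Q}_{r,n}$. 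Conversely, a $T$-point of $\mathrm{Q}_{r,n}$ is a $T$-flat length-$n$ sheaf with a surjection from $\OO^{\oplus r}$; $T$-flatness together with zero-dimensional fibres forces the pushforward to $T$ to be locally free of rank $n$, and \'etale-local trivialisation recovers commuting matrices $A,B,C$ and a cyclic framing, well-defined up to $\GL_n$.

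The main obstacle will be upgrading this bijection of $T$-points to a scheme-theoretic isomorphism, rather than merely an isomorphism of underlying reduced schemes. Concretely, one must verify that the ideal cutting out $\crit(f_{r,n})$ inside the smooth $\NCQuot^n_r$ (generated by the entries of the three commutators) corresponds, under the identification above, to the ideal cutting out honest flat families of $\OO_{\BA^3}$-modules from flat families of modules over the free algebra $\BC\langle A,B,C\rangle$. Equivalently, one must identify the Jacobi algebra of $(L_3, A[B,C])$ with $\BC[x,y,z]$ at the level of deformation functors, and check that framed cyclicity matches a surjection from $\OO^{\oplus r}$. Once this is done in families, one descends along the geometric GIT quotient (geometricity being automatic from $\theta_1 > \theta_2$, which collapses semistability and stability) to conclude.
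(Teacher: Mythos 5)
The statement is quoted from \cite[Theorem~2.6]{BR18} and is not reproved in this paper, so there is no internal proof to compare against; but your argument follows essentially the same route as the cited reference. You identify the critical equations of $\Tr(A[B,C])$ with the commutator relations (equivalently, with the passage from the free algebra to its Jacobi algebra $\BC[x,y,z]$), interpret $\theta$-stability for $\theta_1>\theta_2$ as cyclicity of the framing, and match $T$-points of $\crit(f_{r,n})$ with $T$-flat length-$n$ quotients of $\OO_{\BA^3\times T}^{\oplus r}$ via pushforward, descending along the free (hence geometric) $\GL_n$-quotient. One small stylistic point: the scheme-theoretic matching you flag is not really ``at the level of deformation functors''; it is the elementary algebra identity $\Jac(L_3,A[B,C])\cong\BC[x,y,z]$ combined with a functor-of-points comparison, and you have the right ingredients for it.
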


The main result of this paper (\Cref{main_thm}, proved in \Cref{main_thm_BODY} below) states that the d-critical structure 
\[
s_{r,n}^{\crit} = f_{r,n} + (\dd f_{r,n})^2 \in \HH^0\left(\CS^0_{\crit(f_{r,n})}\right)
\]
determined by the function $f_{r,n}$ in \Cref{critical_quot} agrees, up to the isomorphism $\iota_{r,n}$, with the pullback of $s_n^{\der}$ along the smooth forgetful morphism $q_{r,n}$.

\begin{remark}
According to our conventions, in a dimension vector (or a stability condition) on a framed quiver $\infty \to Q$ the first entry always refers to the framing vertex $\infty$.  As explained in \cite{BR18,cazzaniga2020higher}, stability of a representation $(A,B,C,v_1,\ldots,v_r) \in \Rep_{(1,n)}(\widetilde L_3)$ with respect to $\theta$ translates into the condition that the framing vectors $v_1,\ldots,v_r \in \Hom_{\BC}(V_\infty,V_1) = \BC^n$ generate the underlying (unframed) representation $(A,B,C) \in \Rep_n(L_3)$. Since $\GL_n$ acts freely on $\Rep_{(1,n)}^{\theta-\mathrm{st}}(\widetilde{L}_3)$, this exhibits $\NCQuot^n_r$ as a \emph{smooth} quasiprojective variety of dimension $2n^2+rn$.
\end{remark}

We have a cartesian diagram 
\begin{equation}
\label{diagram_main_players}
\begin{tikzcd}[row sep = large,column sep=large]
\crit(f_{r,n})\MySymb{dr}\arrow[swap]{d}{\widetilde q_{r,n}}\arrow[r,"\iota_{r,n}","\sim"'] & \mathrm{Q}_{r,n}\arrow{d}{q_{r,n}} \\
{[U_n/\GL_n]}\arrow[r,"\iota_n","\sim"'] & \CM_n
\end{tikzcd}    
\end{equation}
where $\iota_{r,n}$ is the isomorphism of \Cref{critical_quot} and $\iota_n$ is the isomorphism \eqref{critical_Mn}.

\begin{prop}\label{prop:PB_of_sn}
There is an identity of d-critical structures
\[
\widetilde q_{r,n}^\ast s_n^{\crit} = s_{r,n}^{\crit} \in \HH^0\left(\CS^0_{\crit(f_{r,n})}\right),
\]
where $s_{r,n}^{\crit} = f_{r,n} + (\dd f_{r,n})^2$ is the d-critical structure defined by the function $f_{r,n}$.
\end{prop}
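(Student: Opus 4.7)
The plan is to reduce the statement to the fact that the operation $f \mapsto f + (\dd f)^2$ defining a d-critical structure is functorial under smooth pullback of regular functions. The starting observation, already implicit in \cite{BR18}, is that the potential $W = A[B,C]$ on the $r$-framed $3$-loop quiver $\widetilde{L}_3$ does not involve the framing arrows $v_1,\ldots,v_r$. Writing $V_n = \Rep_n(L_3)$ and $\widetilde V_{r,n} = \Rep_{(1,n)}^{\theta-\mathrm{st}}(\widetilde{L}_3)$, and letting $\pi\colon \widetilde V_{r,n} \to V_n$ denote the $\GL_n$-equivariant forgetful projection, this observation amounts to the identity $\pi^\ast f_n = \widetilde f_{r,n}$, where $\widetilde f_{r,n}$ is the $\GL_n$-invariant function on $\widetilde V_{r,n}$ that descends to $f_{r,n}$ on $\NCQuot^n_r = \widetilde V_{r,n}/\GL_n$.

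Next I would exploit the smoothness of $\pi$, which is immediate because $\widetilde V_{r,n}$ is an open subscheme of the trivial rank-$rn$ vector bundle $V_n \times (\BC^n)^r \to V_n$. Smoothness gives the scheme-theoretic equality $\crit(\widetilde f_{r,n}) = \pi^{-1}(U_n)$, since the relative cotangent sequence realises $\pi^\ast \Omega_{V_n}$ as a subbundle of $\Omega_{\widetilde V_{r,n}}$ in which $\dd\widetilde f_{r,n} = \pi^\ast\dd f_n$ already lives. Consequently, the d-critical chart $(U_n, V_n, f_n, i_n)$ representing $s_n^{\crit}$ pulls back along $\pi$ to the d-critical chart $(\pi^{-1}(U_n), \widetilde V_{r,n}, \widetilde f_{r,n}, i'_n)$, and the explicit form of Joyce's \cite[Proposition 2.8]{Joyce1} on sections yields
\[
\pi^\ast\bigl(f_n + (\dd f_n)^2\bigr) \,=\, \widetilde f_{r,n} + (\dd \widetilde f_{r,n})^2 \,\in\, \HH^0\bigl(\CS^0_{\pi^{-1}(U_n)}\bigr).
\]

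To finish, I would descend the identity to the quotient. The canonical atlas $U_n \to [U_n/\GL_n]$ pulls back under the stack morphism $\widetilde q_{r,n}$ to the $\GL_n$-torsor $\pi^{-1}(U_n) \to \crit(f_{r,n})$, equipped with the equivariant smooth map $\pi$ back to $U_n$; this identification is routine given the cartesian square \eqref{diagram_main_players}. Under this identification, the $\GL_n$-invariant section displayed above descends to $f_{r,n} + (\dd f_{r,n})^2 = s_{r,n}^{\crit}$ on $\crit(f_{r,n})$, and by definition of the pullback of a d-critical structure on a quotient stack via its atlas, this descent coincides with $\widetilde q_{r,n}^\ast s_n^{\crit}$. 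I do not anticipate any real obstacle: the entire argument reduces to Joyce's smooth functoriality combined with the defining identity $\widetilde f_{r,n} = \pi^\ast f_n$ expressing the framed potential as a pullback of the unframed one. The only point of care is making the atlas-theoretic bookkeeping precise, but this is standard for quotient stacks by a free action.
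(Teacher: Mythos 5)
Your proposal is correct and follows essentially the same route as the paper's proof: the key observation is that the potential does not interact with the framing arrows, so the (trace of the) superpotential on the framed representation space is the pullback of the one on the unframed space, and the conclusion follows from Joyce's smooth-pullback functoriality. The paper phrases this directly via the smooth stack morphism $p_{r,n}\colon \NCQuot^n_r \to \mathfrak M_n(L_3)$ and the descended potential $f_n$ on $\mathfrak M_n(L_3)$, whereas you work one level down on the $\GL_n$-atlases $\widetilde V_{r,n} \to V_n$ and then descend, but the content is identical.
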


\begin{proof}
Forgetting the framing data yields a smooth morphism
\[
p_{r,n}\colon \NCQuot^n_r \to \mathfrak{M}_n(L_3) = [\Rep_n(L_3)/\GL_n]
\]
fitting in a cartesian diagram
\[
\begin{tikzcd}[row sep=large]
\crit(f_{r,n})\MySymb{dr} \arrow[swap]{d}{\widetilde{q}_{r,n}}\arrow[hook]{r}  & \NCQuot^n_r \arrow{d}{p_{r,n}} \\
{[}U_n/\GL_n{]}\arrow[hook]{r} & \mathfrak{M}_n(L_3)
\end{tikzcd}
\]
where the horizontal arrows are closed immersions. The function $f_n$ introduced in \eqref{tr W} descends to a function $\mathfrak{M}_n(L_3) \to \BA^1$, still denoted $f_n$, with critical locus $[U_n/\GL_n]$.
The conclusion then follows by observing that $p_{r,n}^\ast f_n = f_{r,n}$, which is true because $f_{r,n}$ does not interact with the framing data.
\end{proof}

We can now complete the proof of \Cref{main_thm}.

\begin{theorem}\label{main_thm_BODY}
Let $s_{r,n}^{\der}\in \HH^0(\CS^0_{\mathrm{Q}_{r,n}})$ be the pullback of the d-critical structure $s_n^{\der}$ (defined in \eqref{s_n der} by truncating $\omega_n$) along $q_{r,n}$. Then the isomorphism $\iota_{r,n}$ of \Cref{critical_quot} induces an identity
\[
\iota_{r,n}^\ast s_{r,n}^{\der} = s_{r,n}^{\crit} \in \HH^0\left(\CS^0_{\crit(f_{r,n})}\right).
\]
\end{theorem}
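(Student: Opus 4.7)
The plan is to compose three ingredients that have already been arranged in the section: the cartesian diagram \eqref{diagram_main_players}, the stack-level comparison Theorem \ref{thm:comparison_d-crit_1}, and the framing-forgetting comparison Proposition \ref{prop:PB_of_sn}. The cartesian square serves as the bridge between the stack of length-$n$ sheaves and its smooth cover by the Quot scheme, while the two comparison results handle the `vertical' (critical-vs-derived) identification on each side.

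First, I would invoke functoriality of the pullback of sections of the sheaf $\CS^0$ along smooth morphisms. The stacky version of \cite[Proposition 2.8]{Joyce1}, together with \cite[Corollary 2.52 and Example 2.55]{Joyce1}, ensures that pullbacks of d-critical structures behave functorially with respect to compositions of smooth maps and isomorphisms. Applied to the commutative square \eqref{diagram_main_players}, this gives $\iota_{r,n}^{\ast}\circ q_{r,n}^{\ast}=\widetilde q_{r,n}^{\ast}\circ \iota_n^{\ast}$ as operators on $\HH^0(\CS^0_{\CM_n})$. Since by definition $s_{r,n}^{\der}=q_{r,n}^{\ast}s_n^{\der}$, this already yields
\[
\iota_{r,n}^{\ast}s_{r,n}^{\der}\;=\;\widetilde q_{r,n}^{\ast}\iota_n^{\ast}s_n^{\der}.
\]

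Then I would substitute $\iota_n^{\ast}s_n^{\der}=s_n^{\crit}$, which is exactly Theorem \ref{thm:comparison_d-crit_1}, and finally apply Proposition \ref{prop:PB_of_sn} to rewrite $\widetilde q_{r,n}^{\ast}s_n^{\crit}=s_{r,n}^{\crit}$. Stringing these together produces the chain
\[
\iota_{r,n}^{\ast}s_{r,n}^{\der}\;=\;\widetilde q_{r,n}^{\ast}\iota_n^{\ast}s_n^{\der}\;=\;\widetilde q_{r,n}^{\ast}s_n^{\crit}\;=\;s_{r,n}^{\crit},
\]
which is the desired identity in $\HH^0(\CS^0_{\crit(f_{r,n})})$.

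There is no substantive obstacle left in the present argument: the entire weight of the theorem has been shifted onto the auxiliary result Theorem \ref{thm:comparison_d-crit_1}, whose proof is the content of Section \ref{subsec:d-crit-agree} and which genuinely compares a derived-geometric construction (truncation of the $-1$-shifted symplectic form $\omega_n$ on $\BCal{M}_n$) with the algebraic trace potential $f_n$ on the quotient presentation $[U_n/\GL_n]$. The only mild subtlety in the chase above is that the equality takes place in the sheaf $\CS^0$ on the Quot scheme, so one should check that $\iota_n^{\ast}s_n^{\der}$, which \emph{a priori} lives in the $\GL_n$-invariant part $\HH^0(\CS^0_{U_n})^{\GL_n}$, pulls back to $\HH^0(\CS^0_{\crit(f_{r,n})})$ via the smooth map $\widetilde q_{r,n}$; this is automatic from the descent definition of $\CS^0_{\CX}$ and the conventional identification of $G$-invariant sections with sections on the quotient stack recalled after \Cref{thm:truncation}.
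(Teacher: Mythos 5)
Your proposal is correct and is essentially identical to the paper's own proof: both establish the chain $\iota_{r,n}^{\ast}s_{r,n}^{\der}=\iota_{r,n}^{\ast}q_{r,n}^{\ast}s_n^{\der}=\widetilde q_{r,n}^{\ast}\iota_n^{\ast}s_n^{\der}=\widetilde q_{r,n}^{\ast}s_n^{\crit}=s_{r,n}^{\crit}$, using the definition of $s_{r,n}^{\der}$, the commutativity of Diagram \eqref{diagram_main_players}, \Cref{thm:comparison_d-crit_1}, and \Cref{prop:PB_of_sn}, in the same order. Your closing remark on the identification of $\GL_n$-invariant sections with sections on the quotient stack is a correct observation that the paper handles implicitly via its stated conventions.
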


\begin{proof}
We have
\begin{align*}
    \iota_{r,n}^\ast s_{r,n}^{\der}
   &\,=\, \iota_{r,n}^\ast q_{r,n}^\ast s_n^{\der} & \textrm{by definition} 
   \\
    &\,=\, \widetilde q_{r,n}^\ast \iota_n^\ast s_n^{\der}& \textrm{by Diagram \eqref{diagram_main_players}} \\
    &\,=\,\widetilde q_{r,n}^\ast s_n^{\crit} & \textrm{by \Cref{thm:comparison_d-crit_1}}\\
    &\,=\,s_{r,n}^{\crit} & \textrm{by \Cref{prop:PB_of_sn}},
\end{align*}
which concludes the proof.
\end{proof}

\section{The d-critical structure(s) on \texorpdfstring{$\CM_n$}{}}

In this section we compare the two d-critical structures
\[
s_n^{\crit} \in \HH^0\left(\CS^0_{[U_n/\GL_n]}\right),\quad s_n^{\der} \in \HH^0\left(\CS^0_{\CM_n}\right)
\]
on the isomorphic spaces $[U_n/\GL_n]$ and $\CM_n$, introduced in \Cref{sec:proof_main_result}. These d-critical structures come from quiver representations and symplectic derived algebraic geometry respectively. Our goal (achieved in \cref{thm:comparison_d-crit}) is to prove \Cref{thm:comparison_d-crit_1}.

To do so, we first analyse and give explicit local d-critical charts for the two d-critical structures. For $s_n^{\crit}$, we use the geometry of the quotient stack $[U_n / \GL_n]$ and Luna's \'{e}tale slice theorem. For $s_n^{\der}$, we take advantage of the derived deformation theory of the $-1$-shifted symplectic stack $\BCal{M}_n$ to obtain explicit formal charts in Darboux form (cf.~\cite{BBBJ}). Finally, we argue that one can pass from formal neighbourhoods to honest smooth neighbourhoods to obtain the equality of the d-critical structures.

\begin{notation}\label{notation:matrices}
Throughout we shall use the notation $\Mat_{a,b}(R)$ to denote the space of matrices with $a$ rows and $b$ columns with entries in a ring $R$. If $R=\BC$, we simply write $\Mat_{a,b}$.
\end{notation}

\subsection{The d-critical structure \texorpdfstring{$s_n^{\crit}$}{} coming from quiver representations} \label{subsection: s_n^crit}

Let $[E] \in \CM_n$ be a closed point. Then the corresponding sheaf $E$ must be polystable, i.e.~of the form 
\begin{equation} \label{polystable sheaf}
    E = \bigoplus_{i=1}^k \BC^{a_i} \otimes \OO_{p_i}
\end{equation}
where, for $i=1,\ldots,k$, the points $p_i = (\alpha_i, \beta_i, \gamma_i) \in \BA^3$ are pairwise distinct and $a_i$ are positive integers such that $a_1+\cdots+a_k=n$.

We fix some notation for convenience. 
Let $Y_n = \Rep_n(L_3) = \End_{\BC}(\BC^n)^3$ be the vector space of triples of $n \times n$ matrices, and let $f_n\colon Y_n \to \BC$ be the regular function \eqref{tr W}.
Denote by ${\boldit{a}} = (a_1,\ldots, a_k)$ the $k$-tuple of positive integers  determined by \eqref{polystable sheaf}. Form the product
\[
Y_{\boldit{a}} = \prod_{i=1}^k Y_{a_i}.
\]
Then there is a closed embedding $\Phi_{\boldit{a}} \colon Y_{\boldit{a}} \into Y_n$ by block diagonal matrices with square diagonal blocks of sizes $a_1,\ldots, a_k$. The reductive algebraic group $\GL_{\boldit{a}} = \prod_{i=1}^k \GL_{a_i}$ acts on $Y_{\boldit{a}}$ by componentwise conjugation and $\Phi_{\boldit{a}}$ is equivariant with respect to the inclusion $\GL_{\boldit{a}} \subset \GL_n$ by block diagonal matrices of the same kind.

On the space $Y_{\boldit{a}}$ we have the $\GL_{\boldit{a}}$-invariant potential
\begin{equation} \label{def of g_a}
g_{\boldit{a}} = f_{a_1}\oplus \cdots\oplus f_{a_k} \colon Y_{\boldit{a}} \to \BA^1, \quad (A_i,B_i,C_i)_i \mapsto \sum_{i=1}^k \Tr A_i [B_i, C_i],
\end{equation}
where $(A_i, B_i, C_i) \in Y_{a_i}$ for $i= 1,\ldots, k$. 
We have the obvious relation $f_n\circ \Phi_{\boldit{a}} = g_{\boldit{a}}$.
If we set 
\[
U_n = \crit(f_n) \subset Y_n,\quad
U_{\boldit{a}} = \crit(g_{\boldit{a}}) \subset Y_{\boldit{a}},
\]
the restriction $\Phi_{\boldit{a}}\colon U_{\boldit{a}} \into U_n$ is still equivariant with respect to the inclusion $\GL_{\boldit{a}} \subset \GL_n$, and so it induces a morphism of algebraic stacks
\begin{equation}
    \psi_{\boldit{a}} \colon [U_{\boldit{a}}/\GL_{\boldit{a}}] \to [U_n/\GL_n].
\end{equation}
Note that the identification $\crit(g_{\boldit{a}}) = \prod_i \crit(f_{a_i})= \prod_iU_{a_i}$ combined with the product of the isomorphisms $\iota_{a_i}$ as in \eqref{critical_Mn} induces a canonical identification
\[
\begin{tikzcd}
{[}U_{\boldit{a}}/\GL_{\boldit{a}}{]} = \displaystyle\prod_{i=1}^k\,{[}U_{a_i}/\GL_{a_i}{]} \arrow{r}{\sim} & \displaystyle\prod_{i=1}^k \CM_{a_i} = \CM_{\boldit{a}}.
\end{tikzcd}
\]
We can identify the map $\psi_{\boldit{a}}$ with the direct sum map
\[
\psi_{\boldit{a}} \colon \CM_{\boldit{a}} \to \CM_n,
\]
denoted the same way,
taking a $B$-valued point of $\CM_{\boldit{a}}$, i.e.~a $k$-tuple $(\mathscr E_1,\ldots,\mathscr E_k)$ of $B$-flat families of $0$-dimensional sheaves $\mathscr E_i \in \Coh(B\times \BA^3)$, to their direct sum $\bigoplus_{1\leq i\leq k} \mathscr E_i$. (The direct sum of flat sheaves if flat by \cite[\href{https://stacks.math.columbia.edu/tag/05NC}{Tag 05NC}]{stacks-project}).

\begin{lemma}\label{Lemma:etaleness_psi}
Let $\CX_{\boldit{a}} \subset \CM_{\boldit{a}}$ be the open substack parametrising $k$-tuples of sheaves with pairwise disjoint support. Then the morphism 
\[
\psi_{\boldit{a}}\big|_{\CX_{\boldit{a}}}\colon \CX_{\boldit{a}} \to \CM_n
\]
is \'etale.
\end{lemma}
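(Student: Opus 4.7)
The plan is to verify étaleness of $\psi_{\boldit{a}}|_{\CX_{\boldit{a}}}$ by showing two things: that it is locally of finite presentation (which is immediate from the description of both source and target as Artin stacks of coherent sheaves) and that it induces an isomorphism of tangent complexes at every closed point of $\CX_{\boldit{a}}$. These two conditions together imply vanishing of the relative cotangent complex (by Nakayama, applied at closed points), hence étaleness.

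For the tangent complex computation, the plan is to fix a closed point $(E_1,\ldots,E_k) \in \CX_{\boldit{a}}$, so that the $E_i$ are $0$-dimensional sheaves on $\BA^3$ of length $a_i$ with pairwise disjoint supports, and to set $E = \bigoplus_{i=1}^k E_i$. Standard deformation theory of coherent sheaves will identify
\[
\BT_{\CM_{\boldit{a}}}\big|_{(E_i)_i} \simeq \bigoplus_{i=1}^k \RHom(E_i, E_i)[1], \qquad \BT_{\CM_n}\big|_E \simeq \RHom(E,E)[1] \simeq \bigoplus_{i,j=1}^k \RHom(E_i, E_j)[1],
\]
and the differential of $\psi_{\boldit{a}}$ at $(E_i)_i$ as the inclusion of the diagonal ($i=j$) summands. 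The decisive input is that $\Supp(E_i) \cap \Supp(E_j) = \emptyset$ for $i \neq j$ forces $\RHom(E_i, E_j) = 0$, since each $\lExt$-sheaf is supported on the intersection of the supports. Hence the off-diagonal summands vanish and the diagonal inclusion becomes a quasi-isomorphism, yielding the required tangent-level isomorphism at every closed point.

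The main step I anticipate requiring care is the precise invocation of the tangent-at-closed-points criterion for étaleness of morphisms of (not-necessarily-Deligne--Mumford) Artin stacks. If a more hands-on route is preferred, one can construct an étale-local inverse directly: a $0$-dimensional sheaf on $\BA^3$ whose support decomposes into $k$ pairwise disjoint closed subsets splits canonically as a direct sum of sheaves supported on each piece, and this decomposition is functorial in flat families once a labelling of the connected components is fixed. Either route leads to the same conclusion.
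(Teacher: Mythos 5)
Your proposal is correct but takes a genuinely different route from the paper. The paper verifies \'etaleness by the atlas criterion \cite[\href{https://stacks.math.columbia.edu/tag/0CIK}{Tag 0CIK}]{stacks-project}: it chooses $W = \Quot_{\BA^3}(\OO^{\oplus n},n)$ with the smooth surjective forgetful morphism $\rho = q_{n,n} \colon W \to \CM_n$, forms the cartesian square, identifies the fibre product $T$ with an open subscheme of $\prod_i \Quot_{\BA^3}(\OO^{\oplus n},a_i)$, and invokes \cite[Proposition A.3]{BR18}, which asserts precisely that the ``direct-sum'' map on Quot schemes with pairwise disjoint supports is \'etale. Your argument instead works intrinsically on the stacks: you compute $\BT_{\CM_{\boldit{a}}}$ and $\BT_{\CM_n}$ at a closed point via $\RHom$-groups, use the vanishing $\RHom(E_i,E_j)=0$ for $i\neq j$ (supports disjoint, so all $\lExt$-sheaves vanish) to show the differential of $\psi_{\boldit{a}}$ is a quasi-isomorphism, and conclude vanishing of $\BL_{\CX_{\boldit{a}}/\CM_n}$ by Nakayama plus local finite presentation. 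This is cleaner conceptually but, as you flag, relies on the infinitesimal criterion for \'etaleness of a morphism of Artin stacks; it is worth noting that the subtlety is in fact benign here, because the same vanishing $\Hom(E_i,E_j)=0$ for $i\neq j$ shows that $\prod_i\Aut(E_i)\to\Aut(\bigoplus_i E_i)$ is an isomorphism, so $\psi_{\boldit{a}}|_{\CX_{\boldit{a}}}$ is representable by algebraic spaces and the classical criterion applies. Your alternative sketch (canonical splitting of a sheaf over a labelled decomposition of its support, functorial in flat families) is essentially the content hidden inside \cite[Proposition A.3]{BR18}, so it is the route closest in spirit to what the paper ultimately appeals to. Each approach has its advantages: the paper's leverages an existing scheme-level result and avoids deformation theory of stacks entirely, while yours is self-contained and more transparent about where the disjoint-support hypothesis enters.
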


\begin{proof}
According to \cite[\href{https://stacks.math.columbia.edu/tag/0CIK}{Tag 0CIK}]{stacks-project}, to show that $\psi \defeq \psi_{\boldit{a}}|_{\CX_{\boldit{a}}}$ is \'etale it is enough to find an algebraic space $W$, a faithfully flat morphism $\rho\colon W \to \CM_n$ locally of finite presentation and an \'etale morphism $T \to W\times_{\rho,\CM_n,\psi}\CX_{\boldit{a}}$ such that $T \to W$ is \'etale. We start by picking $W=\Quot_{\BA^3}(\OO^{\oplus n},n)$ along with the smooth (cf.~\Cref{smoothness_forgetting_framings}) morphism
\[
\rho = q_{n,n}\colon \Quot_{\BA^3}(\OO^{\oplus n},n) \to \CM_n
\]
sending, for any test scheme $B$, a $B$-flat quotient $\OO_{B\times \BA^3}^{\oplus n} \onto \CT$ to the object $\CT \in \CM_n(B)$.
Note that $\rho$ is surjective in the sense of \cite[\href{https://stacks.math.columbia.edu/tag/04ZR}{Tag 04ZR}]{stacks-project}. Indeed, for any closed point $[E]$, we have a direct sum decomposition as in \eqref{polystable sheaf}, thus the sheaf $E$ receives a surjection from $\OO^{\oplus n}$, which is nothing but the direct sum of the surjections $\OO^{\oplus a_i} \onto \BC^{a_i}\otimes \OO_{p_i}$ --- their direct sum is again surjective because $p_i\neq p_j$ for all $i\neq j$.

Next, we form the cartesian diagram
\[
\begin{tikzcd}[row sep=large,column sep=large]
T\MySymb{dr}\arrow[swap]{d}{\mu}\arrow{r}  & \CX_{\boldit{a}} \arrow{d}{\psi} \\
\Quot_{\BA^3}(\OO^{\oplus n},n) \arrow[swap]{r}{\rho} & \CM_n
\end{tikzcd}
\]
and we pick the identity $T = \Quot_{\BA^3}(\OO^{\oplus n},n)\times_{\rho,\CM_n,\psi}\CX_{\boldit{a}}$ as an \'etale map. We need to show that $\mu$ is \'etale. But this follows from \cite[Proposition A.3]{BR18}, after observing that $T$ is the open subscheme of
\[
\prod_{i=1}^k\Quot_{\BA^3}(\OO^{\oplus n},a_i)
\]
parametrising $k$-tuples of quotients with pairwise disjoint support, and $\mu$ is nothing but the map taking a tuple of surjections to their direct sum.
\end{proof}

Let $E$ be a polystable sheaf supported on points $p_i = (\alpha_i,\beta_i,\gamma_i)$ for $1\leq i\leq k$, as in \eqref{polystable sheaf}.
The point $[E] \in \CM_n$ corresponds under $\iota_n$ to the orbit of the triple of matrices $y_E = \Phi_{\boldit{a}}( v_E ) \in U_n\subset Y_n$, where $v_E = (\unal, \unbet, \ungam) \in U_{\boldit{a}} \subset Y_{\boldit{a}}$ is given by
\begin{equation}
\unal = (\alpha_1 \Id_{a_1},\ldots, \alpha_k \Id_{a_k}),\  \unbet = (\beta_1 \Id_{a_1},\ldots, \beta_k \Id_{a_k}),\ 
\ungam = (\gamma_1 \Id_{a_1},\ldots, \gamma_k \Id_{a_k}).
\end{equation}
We let $\overline v_E$ denote the image of $v_E$ along the smooth atlas $U_{\boldit{a}} \to [U_{\boldit{a}}/\GL_{\boldit{a}}]$, and similarly for $\overline y_E \in [U_n/\GL_n]$. The morphism $\psi_{\boldit{a}}$ maps $\overline v_E \in [U_{\boldit{a}} / \GL_{\boldit{a}}]$ to $\overline y_E \in [U_n/\GL_n]$.
For convenience, we will identify $\unal, \unbet, \ungam$ with their images in $Y_n$ under $\Phi_{\boldit{a}}$, considering them as $n \times n$ block diagonal matrices and more generally we identify elements of $Y_{\boldit{a}}$ with their image in $Y_n$ under $\Phi_{\boldit{a}}$.

\begin{lemma} \label{etale d-critical chart s_n^crit}
There is an open $\GL_{\boldit{a}}$-invariant neighbourhood $v_E \in V \subset Y_{\boldit{a}}$ such that, if $U = \crit(g_{\boldit{a}}|_{V})$, the morphism $\psi_E \defeq \psi_{\boldit{a}}|_{[U/\GL_{\boldit{a}}]}$ is \'{e}tale. In particular $\psi_{\boldit{a}}$ is \'etale at $\overline v_E$.
Moreover, if $\phi_E$ denotes the composition
\[
\begin{tikzcd}
\phi_E \colon U \arrow{r} &
{[}U/\GL_{\boldit{a}}{]} \arrow[hook]{r} & 
{[}U_{\boldit{a}}/\GL_{\boldit{a}}{]} \arrow{r}{\psi_{\boldit{a}}} &
{[}U_n/\GL_n{]},
\end{tikzcd}
\]
then we have an identity of d-critical structures
\[
\phi_E^* s_n^{\crit} = g_{\boldit{a}}|_{V} + (\dd g_{\boldit{a}}|_{V})^2 \in \HH^0(\CS_{U}^0)^{\GL_{\boldit{a}}} \subset \HH^0(\CS_{U}^0).
\]
\end{lemma}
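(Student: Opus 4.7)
The plan is to construct $V$ from the openness of the disjoint-support condition, deduce étaleness of $\psi_E$ from Lemma \ref{Lemma:etaleness_psi}, and then verify the d-critical identity by matching, on the smooth atlas $V$, the chart pulled back from $(Y_n, f_n)$ along $\psi_E$ with the chart $(V, g_{\boldit{a}}|_V)$.

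First, the condition ``the associated $k$-tuple of sheaves has pairwise disjoint supports'' is open and $\GL_{\boldit{a}}$-invariant on $U_{\boldit{a}}$ and contains $v_E$ (since the $p_i$ are pairwise distinct); its image in $[U_{\boldit{a}}/\GL_{\boldit{a}}] = \CM_{\boldit{a}}$ is the open substack $\CX_{\boldit{a}}$. Since $U_{\boldit{a}} \subset Y_{\boldit{a}}$ is closed, this open subset of $U_{\boldit{a}}$ extends to a $\GL_{\boldit{a}}$-invariant open $V \subset Y_{\boldit{a}}$ containing $v_E$ with $V \cap U_{\boldit{a}} = U := \crit(g_{\boldit{a}}|_V)$ and $[U/\GL_{\boldit{a}}] \subset \CX_{\boldit{a}}$. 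Lemma \ref{Lemma:etaleness_psi} then gives that $\psi_E = \psi_{\boldit{a}}|_{[U/\GL_{\boldit{a}}]}$, and hence $\psi_{\boldit{a}}$ at $\overline v_E$, is étale; in particular $\phi_E$ is smooth (composition of the atlas $U \to [U/\GL_{\boldit{a}}]$, the open immersion $[U/\GL_{\boldit{a}}] \hookrightarrow [U_{\boldit{a}}/\GL_{\boldit{a}}]$, and $\psi_{\boldit{a}}$ near $\overline v_E$), so $\phi_E^* s_n^{\crit}$ is a well-defined d-critical structure on $U$.

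To identify it with $g_{\boldit{a}}|_V + (\dd g_{\boldit{a}}|_V)^2$, I work with d-critical charts on the stacks. The structure $s_n^{\crit}$ on $[U_n/\GL_n]$ is presented by the chart $(Y_n, f_n)$ via the smooth atlas $Y_n \to [Y_n/\GL_n]$; pulling this chart back along the étale morphism $\psi_E$ produces a chart for $[U/\GL_{\boldit{a}}]$ whose ambient smooth scheme is the fibered product $Y_n \times_{[Y_n/\GL_n]} [U/\GL_{\boldit{a}}]$, with pulled-back function. Using the Luna-type étale presentation $\GL_n \times^{\GL_{\boldit{a}}} V \to Y_n$ (whose restriction to critical loci $\GL_n \times^{\GL_{\boldit{a}}} U \to U_n$ is precisely the étale cover encoded by $\psi_E$ at the atlas level), together with $f_n \circ \Phi_{\boldit{a}}|_V = g_{\boldit{a}}|_V$ and $\GL_n$-invariance of $f_n$, the restriction of the pulled-back chart to the atlas $V \subset \GL_n \times^{\GL_{\boldit{a}}} V$ is precisely $(V, g_{\boldit{a}}|_V)$; the identity of sections of $\CS_U^0$ then follows, with both sides giving the same $\GL_{\boldit{a}}$-invariant section $g_{\boldit{a}}|_V + (\dd g_{\boldit{a}}|_V)^2$.

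I expect the main obstacle to be this final identification: carefully setting up the Luna-type étale extension $\GL_n \times^{\GL_{\boldit{a}}} V \to Y_n$ around $v_E$, and checking that the chart-pullback construction along $\psi_E$ really restricts on the atlas $V$ to the chart $(V, g_{\boldit{a}}|_V)$ rather than to a ``stabilized'' chart differing by a nondegenerate quadratic form. Once this is in place, the equality of sections of $\CS_U^0$ is automatic, since both charts share the smooth ambient $V$ and the same function $g_{\boldit{a}}|_V$.
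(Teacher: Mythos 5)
Your first paragraph (openness, étaleness via Lemma~\ref{Lemma:etaleness_psi}, smoothness of $\phi_E$) matches the paper's approach and is fine. The problem is the chart identification.

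The map $\GL_n \times^{\GL_{\boldit{a}}} V \to Y_n$ that you propose as the ``Luna-type étale presentation'' is \emph{not} étale: a dimension count gives
\[
\dim\bigl(\GL_n \times^{\GL_{\boldit{a}}} V\bigr) = (n^2 - \textstyle\sum_i a_i^2) + 3\sum_i a_i^2 = n^2 + 2\sum_i a_i^2,
\]
which is strictly less than $\dim Y_n = 3n^2$ whenever $k\ge 2$. The genuine Luna slice at $y_E$ is an open subset $S$ of $Y_{\boldit{a}} \oplus Y_n^\sigma$, not of $Y_{\boldit{a}}$, where $Y_n^\sigma$ is a $\GL_{\boldit{a}}$-invariant complement of $Y_{\boldit{a}} \oplus \mathrm{im}(\sigma)$ in $Y_n$. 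Pulling back the chart $(Y_n, f_n)$ therefore lands you naturally on the ambient $S$, not on $V$, and the function $f_n|_S$ does depend on the extra $Y_n^\sigma$-directions. You flag exactly this as ``the main obstacle'' and conjecture that the pullback chart is $(V, g_{\boldit{a}}|_V)$ ``rather than a stabilized chart differing by a nondegenerate quadratic form'' — but it \emph{is} a stabilized chart, and reducing it to $(V, g_{\boldit{a}}|_V)$ is precisely the substantive content of the proof, not an automatic consequence.

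Concretely, the missing step is to show that the Hessian $H$ of $f_n$ at $y_E$ is non-degenerate when restricted to $Y_n^\sigma$. The paper does this by an explicit block-matrix computation, reducing each off-diagonal block to an expression of the form $\tfrac{1}{\bar\alpha}(|\alpha|^2+|\beta|^2+|\gamma|^2)\Tr(Y_1 Z_2 - Y_2 Z_1)$. Non-degeneracy then forces (after shrinking $S$) the ideal $(\dd f_n|_S)$ to equal $(\dd g_{\boldit{a}}, K)$, where $K$ is the ideal of $Y_{\boldit{a}}$ in $S$ generated by the coordinates of $Y_n^\sigma$; one then restricts along the closed immersion $\Psi: V \hookrightarrow S$ and checks by the defining exact sequence for $\CS^0_U$ that $f_n|_S + I^2 = g_{\boldit{a}}|_V + J^2$. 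Your outline omits all of this, and the étale presentation you put forward to sidestep it is dimensionally impossible, so the argument as written has a real gap at its core.
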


\begin{proof}
The first statement follows from \cref{Lemma:etaleness_psi}, up to replacing the open substack $[U/\GL_{\boldit{a}}]\into [U_{\boldit{a}}/\GL_{\boldit{a}}]$ with its intersection with $\CX_{\boldit{a}}$. We now compare $f_n$ and $g_{\boldit{a}}$.

For a complex matrix $A \in \Mat_{n,n}$ (cf.~\Cref{notation:matrices}), we consider it as a block matrix with diagonal blocks $A_{11}, \ldots, A_{kk}$ of sizes $a_1 \times a_1,\ldots, a_k \times a_k$ and off-diagonal blocks $A_{ij}$ of sizes $a_i \times a_j$. 

Let $Y_n = Y_{\boldit{a}} \oplus Y_n^\perp$ be the direct sum decomposition where $Y_n^\perp $ is the subspace of triples of matrices $(A,B,C)$ whose diagonal blocks are zero. This decomposition is $\GL_{\boldit{a}}$-invariant, since the conjugation action of $\GL_{\boldit{a}}$ is given on blocks by the formula 
\begin{equation*} \label{gl_a action on blocks}
(h \cdot (A,B,C))_{ij} = \left( h_{ii} A_{ij} h_{jj}^{-1}, h_{ii} B_{ij} h_{jj}^{-1}, h_{ii} C_{ij} h_{jj}^{-1} \right).
\end{equation*}

The derivative $\gl_n \to T_{y_E} Y_n$ of the $\GL_n$-action on $Y_n$ at the point $y_E = (\unal, \unbet, \ungam)$ is given by the map 
\begin{align*}
\sigma \colon & \gl_n = \Mat_{n,n} \lr T_{y_E} Y_n \simeq Y_n \\
 & X \mapsto \left( [X, \unal],\ [X, \unbet],\ [X, \ungam] \right)
\end{align*}
Notice that the $ij$-th blocks of the triple of matrices $\sigma(X)$ are given by the triple of $a_i \times a_j$ matrices
\begin{equation} \label{formula for sigma on block ij}
\sigma(X)_{ij} = -\left( (\alpha_i - \alpha_j) X_{ij}, (\beta_i - \beta_j) X_{ij}, (\gamma_i - \gamma_j) X_{ij}\right).
\end{equation}
In particular, $\mathrm{im} (\sigma)$ is a $\GL_{\boldit{a}}$-invariant subspace of $Y_n^\perp$. 

Define a subspace $\mathrm{im}(\sigma)^\perp \subset Y_n$ by the condition that $(X,Y,Z) \in \mathrm{im}(\sigma)^\perp$ if and only if for all indices $i \neq j$ we have
\begin{equation} \label{def of im(sigma) complement}
\overline{(\alpha_i - \alpha_j)} X_{ij} + \overline{(\beta_i - \beta_j)} Y_{ij} + \overline{(\gamma_i - \gamma_j)} Z_{ij} = 0.
\end{equation}

We thus have a $\GL_{\boldit{a}}$-invariant direct sum decomposition
\begin{equation} \label{decomposition of tangent space}
    T_{y_E} Y_n \simeq Y_n = Y_{\boldit{a}} \oplus \mathrm{im}(\sigma) \oplus Y_n^\sigma
\end{equation}
where $Y_{\boldit{a}} \oplus Y_n^\sigma = \mathrm{im}(\sigma)^\perp$ and $Y_n^\sigma$ is the $\GL_{\boldit{a}}$-invariant complement of $Y_{\boldit{a}}$ in $\mathrm{im}(\sigma)^\perp$ consisting of the matrices that have zero diagonal blocks. Since $\mathrm{im}(\sigma) = T_{y_E} (\GL_n \cdot y_E)$, by Luna's \'{e}tale slice theorem there exists a sufficiently small $\GL_{\boldit{a}}$-invariant open neighbourhood $S$ of $y_E \in Y_{\boldit{a}} \oplus Y_n^\sigma$ which is an \'{e}tale slice for the $\GL_n$-action on $Y_n$ at $y_E$.
Therefore, $T = U_n \times_{Y_n} S$ is an \'{e}tale slice for the $\GL_n$-action on $U_n$ at $y_E$. In particular, $T$ is cut out by the equations $\dd f_n|_{S} =0 $ in $S$ and thus $T = \crit(f_n|_{S}) \subset S$.

Since $f_n|_S = \Tr A[B,C]|_S$, it is easy to check that the derivatives of $f_n$ with respect to the coordinates of $Y_n^\sigma$ vanish on $Y_{\boldit{a}}$. Moreover, the Hessian $H$ of $f_n$ at the point $y_E \in Y_n$ is given by the quadratic form whose value at $(X,Y,Z) \in T_{y_E} Y_n \simeq Y_n$ is
\begin{equation}
    H(X,Y,Z) = \Tr \left( X[\unbet, Z] + X[Y,\ungam] + \unal[Y,Z] \right).
\end{equation}

By direct computation, the form $H$ is non-degenerate on $Y_n^\sigma$. To see this, working in terms of blocks, we have
\begin{align*}
H(X,Y,Z) & = \sum_{i \neq j} \Tr \left( (\beta_i - \beta_j) Z_{ij} X_{ji} - (\gamma_i - \gamma_j) Y_{ij} X_{ji} - (\alpha_i - \alpha_j) Z_{ij}Y_{ji} \right).
\end{align*}

For simplicity, we write each pair of summands corresponding to the indices $i \neq j$ suggestively in the form
\begin{equation} \label{simple summand}
    \beta \Tr(Z_1 X_2) - \gamma \Tr(Y_1 X_2) -\alpha \Tr(Z_1 Y_2) -\beta \Tr (Z_2 X_1) + \gamma \Tr(Y_2 X_1) + \alpha \Tr(Z_2 Y_1)
\end{equation}
where $X_1, Y_1, Z_1$ (resp.~$X_2, Y_2, Z_2$) stand for $X_{ij}, Y_{ij}, Z_{ij}$ (resp.~$X_{ji}, Y_{ji}, Z_{ji}$) and $\alpha, \beta, \gamma$ stand for $\alpha_i - \alpha_j, \beta_i - \beta_j, \gamma_i - \gamma_j$ respectively.

Then \eqref{def of im(sigma) complement} takes the equivalent form
\begin{align*}
\overline{\alpha} X_1 + \overline{\beta} Y_1 + \overline{\gamma} Z_1 &= 0  \\
\overline{\alpha} X_2 + \overline{\beta} Y_2 + \overline{\gamma} Z_2 &= 0.
\end{align*}

At least one of $\alpha, \beta, \gamma$ is non-zero, so without loss of generality we assume $\alpha \neq 0$, as the computation is analogous in the other two cases. Solving for $X_1$ and $X_2$, substituting into \eqref{simple summand} and simplifying, gives the expression
\[
\frac{1}{\overline{\alpha}} \left( |\alpha|^2 + |\beta|^2 + |\gamma|^2 \right) \Tr \left( Y_1 Z_2 - Y_2 Z_1 \right).
\]
This is non-degenerate and therefore $H$ must be non-degenerate.

Since $f_n|_{Y_{\boldit{a}}} = g_{\boldit{a}}$, the non-degeneracy of $H$ on $Y_n^\sigma$ implies that, after possibly replacing $S$ by a $\GL_{\boldit{a}}$-invariant open neighbourhood of $y_E$ in $S$, we may assume that the ideal $I = (\dd f_n|_{S})$ coincides with the ideal $(\dd g_{\boldit{a}}, K)$, where $K$ is the ideal of $Y_{\boldit{a}}$ in $S$ generated by the coordinates of $Y_n^\sigma$. 

Thus, letting $U = U_\bfa \times_S {Y_\bfa} $, we see that $U$ is an open neighbourhood of $v_E$ in $U_{\boldit{a}}$ and the \'{e}tale map
\[ 
\kappa = \psi_{\boldit{a}}\big|_{[U/\GL_{\boldit{a}}]} \colon [U/\GL_{\boldit{a}}] \lr [U_{\boldit{a}}/\GL_{\boldit{a}}] \lr [U_n/\GL_n]
\]
satisfies by construction
\[
\left( \kappa^* s_n^{\crit} \right)\big|_U = f_n\big|_{S} + I^2 \in \HH^0(\CS_{U}^0)^{\GL_{\boldit{a}}}.
\]

Let $V = Y_{\boldit{a}} \times_{S} Y_n^\sigma$. By definition, $V$ is a $\GL_{\boldit{a}}$-invariant open neighbourhood of $y_E$ in $Y_{\boldit{a}}$. We have the embedding $\Psi \colon V \to S$ such that $\Psi^* f_n|_{S} = g_{\boldit{a}}|_{V}$ and $U = \crit(g_{\boldit{a}}|_{V})$. Write $J$ for the ideal $(\dd g_{\boldit{a}}|_{V})$. Then, by the definition of the sheaf $\CS_{U}^0$, the commutative diagram
\[
\begin{tikzcd}
0 \arrow{r} 
& \CS^0_U \arrow[equal]{d} \arrow{r}
& \OO_{S}/I^2 \arrow{d}{\Psi^\ast} \arrow{r}{\dd}
& \Omega_{S} / I \cdot \Omega_{S} \arrow{d}{\Psi^\ast} \\
0 \arrow{r} 
& \CS^0_U \arrow{r}
& \OO_{V}/J^2 \arrow{r}{\dd}
& \Omega_{V} / J \cdot \Omega_{V} 
\end{tikzcd}
\]
implies that $f_n|_{S} + I^2 = g_{\boldit{a}}|_{V} + J^2 \in \HH^0(\CS_{U}^0)^{\GL_{\boldit{a}}}$, which is exactly what we want.
\end{proof}

\subsection{The derived stack $\BCal{M}_n$ of $0$-dimensional coherent sheaves of length $n$ on $\BA^3$} \label{derived_stack_M_n}

Let $\BCal{M}_n$ be the derived moduli stack of $0$-dimensional coherent sheaves of length $n$ on $\BA^3$. Its classical truncation is the stack $\CM_n$. We now recall the definition of $\BCal{M}_n$ and give an explicit description as a derived quotient stack for the convenience of the reader. 

Let $A \in \cdga$. By \cite{ToenVaquie} or \cite[Example~2.7]{Brav-Dyckerhoff-II}, $\BCal{M}_n$ assigns to $A$ the mapping space
\begin{equation} \label{dgcat mapping space}
    \mathrm{Map}_{\mathrm{dgCat}} \left( \BC[x,y,z],\ \mathrm{Proj}_n(A) \right). 
\end{equation}
Here $\BC[x,y,z]$ is viewed as the dg-category with one object and morphisms concentrated in degree $0$ given by $\BC[x,y,z]$, whereas $\mathrm{Proj}_n(A)$ is the dg-category whose objects are rank $n$ projective $A$-modules and the morphism complexes are given by $\Hom^\bullet(E, F)$ for two objects $E,F$.

Since projective modules are locally trivial and, as differential graded algebras, we have $\Hom^\bullet(A^n, A^n) = \gl_n \otimes_\BC A$, we can further simplify \eqref{dgcat mapping space} (up to possible shrinking) into
\begin{equation} \label{dga mapping space}
    \mathrm{Map}_{\dga} \left( \BC[x,y,z],\ \gl_n \otimes_\BC A \right),
\end{equation}
up to conjugation by $\GL_{n}$.

To compute this mapping space with respect to the model structure of the category $\dga$, we need a resolution of the algebra $\BC[x,y,z]$ by a semi-free dg-algebra. This is provided by the following proposition.

\begin{prop}
Let $Q_3$ be the semi-free dg-algebra with generators $x,y,z$ in degree $0$, $x^\ast,y^\ast,z^\ast$ in degree  $-1$ and $w$ in degree $-2$ and differential $\delta$ satisfying
\begin{equation} \label{differential of semi-free}
\begin{split}
    & \delta x = \delta y = \delta z = 0 \\ 
    & \delta x^\ast = [y,z],\ \delta y^\ast = [z,x],\ \delta z^\ast = [x,y] \\ 
    & \delta w = [x,x^\ast] + [y,y^\ast] + [z,z^\ast]. 
\end{split}
\end{equation}
Then the natural morphism $Q_3 \to \BC[x,y,z]$ is a quasi-isomorphism.
\end{prop}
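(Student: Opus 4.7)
The plan is to show that the natural augmentation $\epsilon\colon Q_3 \to \BC[x,y,z]$, sending $x,y,z$ to the three polynomial variables and killing $x^\ast, y^\ast, z^\ast$ and $w$, is a quasi-isomorphism of dg-algebras. Since the target is concentrated in cohomological degree $0$, this reduces to verifying that $H^0(Q_3) \cong \BC[x,y,z]$ and that $H^{i}(Q_3) = 0$ for $i < 0$.

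The first identity is an immediate computation. The degree-zero piece $Q_3^0$ is the free associative algebra $\BC\langle x,y,z\rangle$, and the image of the differential $\delta\colon Q_3^{-1} \to Q_3^0$ is the two-sided ideal generated by the three commutators $\delta x^\ast = [y,z]$, $\delta y^\ast = [z,x]$, $\delta z^\ast = [x,y]$. Taking the quotient yields exactly the commutative polynomial ring $\BC[x,y,z]$, and $\epsilon$ induces the identity on $H^0$.

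The real content of the proposition is therefore the vanishing of $H^{i}(Q_3)$ for $i<0$. My approach would be to recognise $Q_3$ as the Ginzburg Calabi--Yau $3$ dg-algebra attached to the $3$-loop quiver $L_3$ endowed with the potential $W = A[B,C]$. With this identification, the prescriptions $\delta x^\ast = \partial_A W$, $\delta y^\ast = \partial_B W$, $\delta z^\ast = \partial_C W$ are precisely the cyclic derivatives of $W$, while $\delta w = [x, x^\ast] + [y, y^\ast] + [z, z^\ast]$ reproduces the standard preprojective-type relation attached to the unique vertex. The Jacobi algebra $H^0(Q_3) = \BC[x,y,z] = \OO(\BA^3)$ has finite global dimension $3$ and is $3$-Calabi--Yau, reflecting the fact that $\BA^3$ is a smooth Calabi--Yau $3$-fold. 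Under these hypotheses, Keller's theorem on deformed Calabi--Yau completions guarantees that the canonical map from the Ginzburg dg-algebra to its zeroth cohomology is a quasi-isomorphism, and we are done.

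The principal obstacle is precisely this cohomological vanishing in negative degrees. If a self-contained argument is desired, one can instead filter $Q_3$ by the total word length in the \emph{antifield} generators (with $w$ carrying weight $2$ and $x^\ast, y^\ast, z^\ast$ weight $1$) and analyse the associated spectral sequence; the $E_1$-page reduces, after contracting the $w$-direction, to the classical Koszul complex of the regular sequence $(x,y,z)$ on $\BC[x,y,z]$, whose cohomology is concentrated in degree $0$. The spectral sequence then collapses at $E_2$ and delivers the same conclusion. Either way, the main calculational nuisance lies in the bookkeeping of signs and in the interplay between the associative structure of $Q_3$ and the Lie-algebraic flavour of its differential.
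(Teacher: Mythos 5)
Your primary argument (approach (a)) is essentially the one in the paper: identify $Q_3$ as the Ginzburg dg-algebra of the $3$-loop quiver with potential $W = x[y,z]$, note that the Jacobi algebra $\BC[x,y,z]$ is $3$-Calabi--Yau, and invoke the Ginzburg/Keller theorem to conclude. The paper's one-line proof cites \cite[Theorem~5.3.1]{CYAlgebras}; you cite Keller's deformed Calabi--Yau completions. These are the same circle of ideas, and your citation is arguably the sharper one since Keller's result is the corrected and definitive form of Ginzburg's original assertion. Your computation of $H^0(Q_3)$ is also correct.

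Your alternative self-contained argument (b), offered as a backup, does not work as written. The filtration you propose---by total antifield weight, with $w$ of weight $2$ and $x^\ast, y^\ast, z^\ast$ of weight $1$---is vacuous: every generator's antifield weight equals minus its cohomological degree, so this is just the cohomological grading in disguise, and the associated-graded differential vanishes identically. Moreover, the Koszul complex of the regular sequence $(x,y,z)$ acting on $\BC[x,y,z]$ computes $\mathrm{Tor}^{\BC[x,y,z]}_\bullet(\BC,\BC)$, whose cohomology sits in top degree rather than being concentrated in degree $0$, so the claimed $E_2$-collapse is not available. The nontriviality of acyclicity in negative degrees is precisely why the Ginzburg/Keller theorem has real content---Ginzburg's original unconditional claim that the Ginzburg dg-algebra always resolves its zeroth cohomology turned out to be false, and the CY-$3$ hypothesis on the Jacobi algebra is genuinely needed---so a self-contained route would require substantially more care than your sketch indicates. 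Since approach (a) stands on its own, this does not affect the correctness of your proposal overall.
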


\begin{proof}
The claim follows from the fact that the $\BC$-algebra $\BC\braket{x,y,z}$ generated over $\BC$ freely by three variables $x,y,z$ is Calabi--Yau of dimension 3 and \cite[Theorem~5.3.1]{CYAlgebras}.
\end{proof}

It follows from the proposition that the mapping space \eqref{dga mapping space} is computed by
\begin{equation*}
    \Hom_{\dga}(Q_3,\ \gl_n \otimes_\BC A)
\end{equation*}
which, using the results of \cite{DRep}, is naturally isomorphic to
\begin{equation*}
    \Hom_{\cdga} \left( (Q_3)_n,\ A \right).
\end{equation*}
Here $(Q_3)_n$ is the commutative dg-algebra with generators corresponding to the matrix entries of the $n \times n$ matrices $X,Y,Z$ in degree $0$, $X^\ast, Y^\ast, Z^\ast$ in degree $-1$ and $W$ in degree $-2$ with differential determined by \eqref{differential of semi-free}. In particular we have sets of generators given by the matrix entries of
\begin{equation*}
    \Rep_n(L_3) \simeq \gl_n^{\oplus 3}, \quad \Rep_n(L_3) \simeq \gl_n^{\oplus 3}, \quad \gl_n
\end{equation*}
in degrees $0$, $-1$ and $-2$ respectively.

Quotienting by the conjugation action of $\GL_n$, we obtain the following proposition.

\begin{prop} There exists an isomorphism of derived Artin stacks
\begin{equation} \label{description of derived M_n}
\begin{tikzcd}
    \boldsymbol{\iota}_n \colon {\bigl[}\dSpec (Q_3)_n \big/ \GL_n{\bigr]} \arrow{r}{\sim} & \BCal{M}_n.
\end{tikzcd}
\end{equation}
\end{prop}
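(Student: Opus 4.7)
The plan is to observe that the proposition is essentially a direct corollary of the chain of functorial identifications established in the paragraphs immediately preceding the statement: one just has to assemble them and verify $\GL_n$-equivariance. Concretely, I would prove the isomorphism $\boldsymbol{\iota}_n$ at the level of functors of points, then invoke Yoneda to conclude.

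For each $A \in \cdga$, I would build the natural equivalence step by step. Starting from the definition
\[
\BCal{M}_n(A) \,\simeq\, \mathrm{Map}_{\mathrm{dgCat}}\bigl(\BC[x,y,z],\,\mathrm{Proj}_n(A)\bigr),
\]
the local triviality of rank $n$ projective $A$-modules lets us restrict to the full subcategory on $A^{\oplus n}$ (possibly after Zariski refinement of $A$), which replaces $\mathrm{Proj}_n(A)$ with the one-object dg-category $\gl_n \otimes_\BC A$ up to the conjugation action of $\GL_n(A)$. This yields
\[
\BCal{M}_n(A)\,\simeq\,\bigl[\mathrm{Map}_{\dga}\bigl(\BC[x,y,z],\,\gl_n\otimes_\BC A\bigr)\big/\GL_n(A)\bigr].
\]
Using that $Q_3\to \BC[x,y,z]$ is a cofibrant resolution in $\dga$, the derived mapping space becomes the strict hom set $\Hom_{\dga}(Q_3,\gl_n\otimes_\BC A)$. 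Finally, the universal property of the representation scheme functor (cf.\ \cite{DRep}) gives a natural bijection
\[
\Hom_{\dga}(Q_3,\,\gl_n\otimes_\BC A)\,\simeq\,\Hom_{\cdga}\bigl((Q_3)_n,\,A\bigr)\,=\,\dSpec(Q_3)_n(A),
\]
and the conjugation $\GL_n$-action on the left corresponds to the conjugation action on the matrix entries of the generators $X,Y,Z,X^\ast,Y^\ast,Z^\ast,W$ on the right. Taking the quotient gives the functor of points of $[\dSpec(Q_3)_n/\GL_n]$, and naturality in $A$ produces $\boldsymbol{\iota}_n$.

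The main routine subtlety is tracking the $\GL_n$-equivariance through each step, in particular through the DRep identification; this is transparent from the construction but should be stated carefully. As a consistency check one should verify that the classical truncation $t_0$ applied to $\boldsymbol{\iota}_n$ recovers the isomorphism $\iota_n$ of \eqref{critical_Mn}, which follows because $\HH^0((Q_3)_n)$ is the coordinate ring of $U_n=\crit(f_n)$: indeed, the differentials $\delta X^\ast$, $\delta Y^\ast$, $\delta Z^\ast$ impose exactly the commutativity relations $[X,Y]=[Y,Z]=[Z,X]=0$, which are the defining equations of $U_n$ inside $\Rep_n(L_3)$.
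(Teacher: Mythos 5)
Your proof is correct and follows essentially the same chain of identifications that the paper lays out in the paragraphs preceding the proposition (derived mapping stack, local triviality of projectives, the cofibrant resolution $Q_3$, and the DRep adjunction), merely packaged via functors of points and Yoneda. The added observation that $\HH^0((Q_3)_n) = \OO_{U_n}$ so that $t_0(\boldsymbol{\iota}_n)$ recovers $\iota_n$ is a useful sanity check the paper leaves implicit, but the underlying argument is the same.
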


\subsection{The d-critical structure \texorpdfstring{$s_n^{\der}$}{} coming from derived symplectic geometry} \label{sec:sn_derived}

Recall that Equation \eqref{s_n der} defines $s_n^{\der}$ to be the truncation of the $-1$-shifted symplectic structure $\omega_n$ on $\BCal{M}_n$ constructed by Brav and Dyckerhoff \cite{Brav-Dyckerhoff-II}. 

Let $[E] \in \CM_n$ be a closed point, so that the sheaf $E$ is polystable as in \eqref{polystable sheaf}. 
We now proceed to find formally smooth d-critical charts for $(\CM_n,s_n^{\der})$ around such points $[E]$ using the derived deformation theory of $\BCal{M}_n$, following \cite{Toda:Moduli_Ext}. In order to do this, we first need some preparations.

Let $p = (a_0, b_0, c_0) \in \BA^3$ be a point in coordinates $x_0, y_0, z_0$ for $\BA^3$. We denote the functions $x_0-a_0, y_0-b_0, z_0-c_0$ by $x, y, z$ respectively. For convenience, we also write $\OO$ in place of the ring $\OO_{\BA^3}=\BC[x_0,y_0,z_0]$ in what follows.

We have the Koszul resolution $Q_p^\bullet \to \OO_p$, given by the complex
\begin{equation}\label{eqn:Qp_complex}
    Q_p^\bullet \,=\, \bigl[Q^{-3} \to Q^{-2} \to Q^{-1} \to Q^0\bigr] \,=\, \bigl[ \OO \xrightarrow{A} \OO^{\oplus 3} \xrightarrow{B} \OO^{\oplus 3} \xrightarrow{C} \OO \bigr],
\end{equation}
where the morphisms $A, B, C$ are multiplications by the matrices (we are very slightly abusing notation here)
\begin{equation*}
    A = \begin{pmatrix} x \\ y \\ z \end{pmatrix}, \,\,\,\,\,
    B = \begin{pmatrix} 0 & -z & y \\ z & 0 & -x \\ -y & x & 0 \end{pmatrix}, \,\,\,\,\,
    C = \begin{pmatrix} x & y & z \end{pmatrix}.
\end{equation*}

Let 
\begin{equation} \label{g_p algebra}
    \fg_p = \Hom(Q_p^\bullet, Q_p^\bullet) = \RHom( \OO_p, \OO_p ).
\end{equation}

By composing morphisms in each degree, $\fg_p$ has the structure of an (infinite-dimensional)  dg-algebra $(\fg_p, \cdot, \delta )$ over $\BC$.
Moreover, let
\begin{equation} \label{min model of g_p algebra}
    \fg_p^{\min} = \bigoplus_{i=0}^3 \HH^i(\fg_p) = \bigoplus_{i=0}^3 \Ext^i(\OO_p, \OO_p).
\end{equation}
Using the Yoneda product 
\begin{equation*}
    m_2 \colon \Ext^i (\OO_p, \OO_p) \otimes  \Ext^j (\OO_p, \OO_p) \lr \Ext^{i+j} (\OO_p, \OO_p),
\end{equation*}
one can endow $\fg_p^{\min}$ with the structure of a graded commutative algebra over $\BC$ and thus of a dg-algebra $(\fg_p^{\min}, m_2, 0)$ with zero differential. As a graded algebra, it is clear that $\fg_p^{\min}$ is isomorphic to the exterior algebra $\Lambda^\bullet \BC^3$.

\begin{lemma}\label{lemma:dg}
There exists a morphism $I_p \colon (\fg_p^{\min}, m_2, 0) \to (\fg_p, \cdot, \delta)$ of dg-algebras, which induces an isomorphism on homology. In particular, the dg-algebra $\fg_p$ is formal as an $A_\infty$-algebra.
\end{lemma}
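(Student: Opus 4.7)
My plan is to construct $I_p$ explicitly from the Koszul structure of the resolution $Q_p^\bullet$. Setting $V = \BC^3$ with basis $e_1, e_2, e_3$ corresponding to the local coordinates $x, y, z$, one may identify $Q_p^\bullet \cong \OO \otimes_\BC \Lambda^\bullet V$, with $V$ placed in cohomological degree $-1$ and with Koszul differential $\delta_Q = \sum_i x_i \iota_{e_i^\vee}$, where $\iota_\eta$ denotes contraction against $\eta \in V^\vee$. In particular, each $\xi \in V^\vee$ gives a degree $+1$ operator $\iota_\xi \in \fg_p$ acting only on the exterior factor $\Lambda^\bullet V$. The formula $I_p(\xi) = \iota_\xi$, extended multiplicatively to the whole exterior algebra $\Lambda^\bullet V^\vee$, will be my candidate morphism.

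The first step is to verify that $I_p$ is a well-defined graded algebra homomorphism $\Lambda^\bullet V^\vee \to \fg_p$. This follows from the standard fact that contractions pairwise anti-commute and satisfy $\iota_\xi \iota_\xi = 0$, so the subspace of $\fg_p$ generated by the $\iota_\xi$'s is canonically isomorphic to $\Lambda^\bullet V^\vee$. The second step is compatibility with the differentials: the differential on $\fg_p$ is the graded commutator with $\delta_Q$, so I must show $[\delta_Q, \iota_\xi]_{\mathrm{gr}} = 0$. Both $\delta_Q$ and $\iota_\xi$ are odd-degree contractions, so the graded commutator equals the anti-commutator, which vanishes by anti-commutativity of contractions. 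Hence $I_p$ is a dg-algebra map (with zero differential on the source).

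The remaining step is to identify $I_p$ with the identity on cohomology under the natural isomorphism $\HH^\bullet(\fg_p) \cong \Lambda^\bullet V^\vee$. Since $\fg_p^{\min}$ is generated in degree $+1$ and $I_p$ is multiplicative, it suffices to treat degree $+1$. A direct calculation with the complex $\Hom(Q_p^\bullet, \OO_p)$, in which every induced differential vanishes because $x, y, z$ act trivially on $\OO_p$, canonically identifies $\Ext^1(\OO_p, \OO_p)$ with $V^\vee$ in such a way that $\iota_{e_i^\vee}$ represents the dual basis vector. With respect to this identification, $I_p$ is the identity in degree $+1$, and hence in all degrees by multiplicativity. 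Formality of $\fg_p$ as an $A_\infty$-algebra then follows immediately, since any dg-algebra quasi-isomorphism $\fg_p^{\min} \to \fg_p$ is \emph{a fortiori} an $A_\infty$ quasi-isomorphism from the minimal model to $\fg_p$.

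I do not anticipate any substantial obstacle; the only delicate bookkeeping lies in keeping sign conventions straight (in particular that the graded commutator of two odd-degree operators is their anti-commutator) and in pinning down the isomorphism $\fg_p^{\min} \cong \Lambda^\bullet V^\vee$ that makes the induced map in cohomology literally the identity rather than merely an isomorphism.
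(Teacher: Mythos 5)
Your proposal is correct and takes essentially the same approach as the paper's proof, just phrased more intrinsically. The generators $\hat{x}, \hat{y}, \hat{z} \in \fg_p^1$ that the paper writes out by explicit matrix formulas are precisely your contraction operators $\iota_{e_i^\vee}$ on $Q_p^\bullet \cong \OO \otimes_\BC \Lambda^\bullet V$ (up to sign conventions in the Koszul differential), and the relations the paper verifies by direct matrix computation (pairwise graded anticommutation, squares vanishing, and the cocycle condition $[\delta_Q, \iota_\xi]_{\mathrm{gr}} = 0$) are the standard facts about contractions that you invoke.
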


\begin{proof}
We construct the morphism $I_p$ explicitly. For brevity, we identify $\Hom_{\OO}(\OO^{\oplus s}, \OO^{\oplus t})$ with $t \times s$ matrices $\Mat_{t, s}(\OO)$ with entries in $\OO$.

The graded pieces of $\fg_p$ in degrees $0, \ldots, 4$ are
\begin{align*}
    \fg_p^{0} & = \bigoplus_i \Hom(Q^i, Q^{i}) \ \ \ \ = \OO \oplus \Mat_{3,3}(\OO) \oplus \Mat_{3,3}(\OO) \oplus \OO\\
    \fg_p^{1} & = \bigoplus_i \Hom(Q^i, Q^{i+1}) = \Mat_{3,1}(\OO) \oplus \Mat_{3,3}(\OO) \oplus \Mat_{1,3}(\OO)\\
    \fg_p^{2} & = \bigoplus_i \Hom(Q^i, Q^{i+2}) = \Mat_{3,1}(\OO) \oplus \Mat_{1,3}(\OO) &\\
    \fg_p^{3} & = \bigoplus_i \Hom(Q^i, Q^{i+3}) = \OO \\
    \fg_p^{4} & = \bigoplus_i \Hom(Q^i, Q^{i+4}) = 0.  
\end{align*}

Define elements $\hat{1} \in \fg_p^0$ and $\hat{x}, \hat{y}, \hat{z} \in \fg_p^{1}$ by
\begin{align*}
    \hat{1} & = \left( 1, \Id_3, \Id_3, 1 \right) \\
    \hat{x} & = \left( 
    \begin{pmatrix} 1 \\ 0 \\ 0 \end{pmatrix}, \ 
    \begin{pmatrix} 0 & 0 & 0 \\ 0 & 0 & -1 \\ 0 & 1 & 0 \end{pmatrix}, \
    \begin{pmatrix} 1 & 0 & 0 \end{pmatrix}
    \right)\\
    \hat{y} & = \left( 
    \begin{pmatrix} 0 \\ 1 \\ 0 \end{pmatrix}, \ 
    \begin{pmatrix} 0 & 0 & 1 \\ 0 & 0 & 0 \\ -1 & 0 & 0 \end{pmatrix}, \
    \begin{pmatrix} 0 & 1 & 0 \end{pmatrix}
    \right)\\
    \hat{z} & = \left( 
    \begin{pmatrix} 0 \\ 0 \\ 1 \end{pmatrix}, \ 
    \begin{pmatrix} 0 & -1 & 0 \\ 1 & 0 & 0 \\ 0 & 0 & 0 \end{pmatrix}, \
    \begin{pmatrix} 0 & 0 & 1 \end{pmatrix}
    \right).
\end{align*}

We may verify directly that these satisfy the relations
\begin{align*}
    \hat{1} \cdot \hat{x} = \hat{x},\,\,\,\, \hat{1} \cdot \hat{y} = \hat{y},\,\,\,\, \hat{1} \cdot \hat{z} = \hat{z} \\
    \hat{x} \cdot \hat{x} = 0,\,\,\,\, \hat{y} \cdot\hat{y} = 0, \,\,\,\, \hat{z} \cdot \hat{z} = 0,
\end{align*}
as well as
\begin{align*}
    \hat{x} \cdot \hat{y} = - \hat{y} \cdot \hat{x} = \left(  
    \begin{pmatrix} 0 \\ 0 \\ 1 \end{pmatrix},\ 
    \begin{pmatrix} 1 & 0 & 0 \end{pmatrix}
    \right) \\
    \hat{y} \cdot \hat{z} = - \hat{z} \cdot \hat{y} = \left(  
    \begin{pmatrix} 1 \\ 0 \\ 0 \end{pmatrix},\ 
    \begin{pmatrix} 1 & 0 & 0 \end{pmatrix}
    \right) \\
    \hat{z} \cdot \hat{x} = - \hat{x} \cdot \hat{z} = \left(  
    \begin{pmatrix} 0 \\ 1 \\ 0 \end{pmatrix},\ 
    \begin{pmatrix} 0 & 1 & 0 \end{pmatrix}
    \right)
\end{align*}
\begin{align*}
    \hat{x} \cdot \hat{y} \cdot \hat{z} &= 2.
\end{align*}

So the associative algebra $R$ generated by $\hat{1}, \hat{x}, \hat{y}, \hat{z}$ is a sub-algebra of $\fg_p$ isomorphic to $\Lambda^\bullet \BC^3$.
Moreover, all of the elements of $R$ are cocycles for the differential $\delta$ of $\fg_p$ and their images in homology give bases for the homology groups $\HH^i(\fg_p) = \Ext^i(\OO_p, \OO_p)$. By the definition of the Yoneda product $m_2$, it is immediate that $R$ is isomorphic to $\fg_p^{\min}$, giving a morphism of dg-algebras $I_p \colon \fg_p^{\min} \to \fg_p$ which, by construction, induces the identity on homology, as desired.
\end{proof}

\begin{remark}
The element $\hat{x}\in \fg_p^{1}$ is the evaluation of the triple $(A, B, C)$ at the point $(1,0,0) \in \BC^3$. Analogous statements hold for $\hat{y}, \hat{z}$ and all products of $\hat{x}, \hat{y}, \hat{z}$. This motivates their definition.
\end{remark}

Let now $E$ be a polystable sheaf as in \eqref{polystable sheaf} and write $Q_E^\bullet = \bigoplus_{i=1}^k \BC^{a_i} \otimes Q_{p_i}^\bullet$. We then define
\begin{align*}
    \fg_E = \Hom(Q_E^\bullet, Q_E^\bullet) = \bigoplus_{i,j=1}^k \Hom(Q_{p_i}^\bullet, Q_{p_j}^\bullet) \otimes \Mat_{a_j, a_i} = \RHom(E, E)
\end{align*}
and
\begin{align} \label{min model of g_E algebra}
    \fg_E^{\mathrm{ss}} & = \bigoplus_{i=1}^k \fg_{p_i} \otimes \Mat_{a_i, a_i} \nonumber\\
    \fg_E^{\min} & = \bigoplus_{i=1}^k \fg_{p_i}^{\min} \otimes \Mat_{a_i, a_i}.
\end{align}
Then $\fg_E$ is a dg-algebra with differential $\delta$ induced from that of each summand $\fg_{p_i}$ and $\fg_E^{\mathrm{ss}} \subset \fg_E$ is a dg-subalgebra.
In the same fashion $\fg_E^{\min}$ is a commutative graded algebra.
Since for any $i \neq j$ and any $\ell$ we have $\Ext^\ell(\OO_{p_i}, \OO_{p_j}) = 0$, the inclusion $\fg_E^{\mathrm{ss}} \subset \fg_E$ is a quasi-isomorphism and $\fg_E^{\min}$ is isomorphic as a dg-algebra to $(\Ext^*(E, E), m_2, 0)$.

\begin{corollary}
Let $E$ be a polystable sheaf as in \eqref{polystable sheaf}. Then there exists a morphism 
\[
I_E \colon (\fg_E^{\min}, m_2, 0) \to (\fg_E, \cdot, \delta)
\]
of dg-algebras, which induces an isomorphism on homology. Thus, the dg-algebra $\fg_E$ is formal as an $A_\infty$-algebra.
\end{corollary}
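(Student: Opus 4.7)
The plan is to assemble $I_E$ from the pointwise morphisms $I_{p_i}$ supplied by \Cref{lemma:dg}, exploiting the block structure in \eqref{min model of g_E algebra}. The key observation, already recorded in the excerpt, is that the inclusion $\fg_E^{\mathrm{ss}} \hookrightarrow \fg_E$ is a dg-algebra quasi-isomorphism, since $\Ext^\ell(\OO_{p_i},\OO_{p_j}) = 0$ whenever $i \neq j$ and all $\ell$. It therefore suffices to construct a dg-algebra map into $\fg_E^{\mathrm{ss}}$ and postcompose with this inclusion.

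Concretely, for each $i = 1,\ldots, k$, apply \Cref{lemma:dg} to obtain a dg-algebra morphism $I_{p_i} \colon (\fg_{p_i}^{\min}, m_2, 0) \to (\fg_{p_i}, \cdot, \delta)$ inducing the identity on cohomology. Tensor with the identity on the ordinary algebra $\Mat_{a_i, a_i}$ (placed in degree $0$ with trivial differential) to get
\[
I_{p_i} \otimes \id \colon \fg_{p_i}^{\min} \otimes \Mat_{a_i, a_i} \lr \fg_{p_i} \otimes \Mat_{a_i, a_i},
\]
a morphism of dg-algebras, where both sides carry the tensor-product multiplication and the differential inherited from the first factor. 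Taking the direct sum over $i$ then yields a dg-algebra morphism
\[
I_E \defeq \bigoplus_{i=1}^k \left( I_{p_i} \otimes \id \right) \colon \fg_E^{\min} \lr \fg_E^{\mathrm{ss}} \hookrightarrow \fg_E,
\]
which is the candidate.

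To verify that $I_E$ induces the identity on homology, it is enough to note that each $I_{p_i}$ does by \Cref{lemma:dg}, that $\Mat_{a_i, a_i}$ is a flat $\BC$-module (so tensoring preserves quasi-isomorphisms), and that both the direct sum and the inclusion $\fg_E^{\mathrm{ss}} \hookrightarrow \fg_E$ induce isomorphisms on cohomology; the resulting map on $\HH^\ast$ is the standard identification $\bigoplus_i \Ext^\ast(\OO_{p_i},\OO_{p_i}) \otimes \Mat_{a_i,a_i} \simto \Ext^\ast(E,E)$, which preserves the Yoneda product and hence is the identity of $\fg_E^{\min}$. The $A_\infty$-formality of $\fg_E$ is then a direct corollary: we have produced a dg-algebra quasi-isomorphism from the cohomology algebra $(\HH^\ast(\fg_E), m_2, 0)$ to $\fg_E$, which by Kadeishvili's theorem (or simply the standard definition of formality) forces $\fg_E$ to be $A_\infty$-equivalent to its cohomology. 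No real obstacle is expected: the substantive content of the result lies entirely in \Cref{lemma:dg}, and the extension to polystable $E$ is purely a matter of packaging the block-diagonal structure correctly.
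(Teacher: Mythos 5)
Your construction of $I_E$ as $\bigoplus_{i=1}^k (I_{p_i} \otimes \id_{\Mat_{a_i,a_i}})$ followed by the inclusion $\fg_E^{\mathrm{ss}} \hookrightarrow \fg_E$ is exactly the paper's proof, just spelled out in more detail. Correct, and same approach.
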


\begin{proof}
The map $I_E$ is given by the composition of the morphism 
$\bigoplus_{i=1}^k I_{p_i} \otimes \id_{\Mat_{a_i, a_i}}$
and the inclusion $\fg_E^{\mathrm{ss}} \subset \fg_E$.
\end{proof}

We fix some notation and useful facts before proceeding. 

Let $\mathsf S = \widehat{S}(\fg_E^{\min, >0}[1])^\vee$ be the dual of the abelianisation of the Bar construction of the dg-algebra $\fg_E^{\min, >0}$, which is the Chevalley--Eilenberg dg-algebra of the dg-Lie algebra associated to the commutative dg-algebra $\fg_E^{\min}$, with Lie bracket on each $\fg_{p_i}^{\min} \otimes \Mat_{a_i,a_i}$ determined by the equations
\begin{equation} \label{commutator relations}
\begin{split}
    \left[ (\hat{x}_i^\vee \otimes A_i) , (\hat{y}_i^\vee \otimes B_i) \right] & = \hat{z}_i^\vee \otimes [A_i,B_i] \\
    \left[ (\hat{y}_i^\vee \otimes B_i), (\hat{z}_i^\vee \otimes C_i) \right] & = \hat{x}_i^\vee \otimes [B_i,C_i] \\
    \left[ (\hat{z}_i^\vee \otimes C_i), (\hat{x}_i^\vee \otimes A_i) \right] & = \hat{y}_i^\vee \otimes [C_i,A_i], 
\end{split}
\end{equation}
where each $\fg_{p_i}^{\min} \cong \Lambda^\bullet \BC^3$ using the given basis $\lbrace \hat{x}_i, \hat{y}_i, \hat{z}_i \rbrace$. Here we have used the fact that the Lie bracket in the dg-Lie algebra structure of the derived tangent complex $\fg_E[1]$ of $\BCal{M}_n$ at $[E]$ is given by commutator of matrices (see \cite[Proposition~3.3]{Brav-Dyckerhoff-II} or \cite[Proposition~2.4.4]{TangentLieAlgebra}).

Thus, the definition \eqref{min model of g_E algebra} implies that $\mathsf S$ is generated by subalgebras $\mathsf S_i$, where each $\mathsf S_i$ is a negatively graded commutative dg-algebra and the differential $\epsilon$ in degree $-1$ is determined by the morphisms 
\begin{align} \label{differential of C-E algebra}
     \Lambda^2 \left( (\BC^3)^\vee \otimes \Mat_{a_i, a_i}^\vee \right) \xrightarrow{\epsilon_i} \Sym^\bullet \left( (\BC^3 )^\vee \otimes \Mat_{a_i, a_i}^\vee \right)
\end{align}
satisfying, due to the Lie bracket relations \eqref{commutator relations},
\begin{equation}
\begin{split}
    (\hat{x}_i^\vee \otimes A_i) \wedge (\hat{y}_i^\vee \otimes B_i) & \mapsto \hat{z}_i^\vee \otimes [A_i,B_i] \\
    (\hat{y}_i^\vee \otimes B_i) \wedge (\hat{z}_i^\vee \otimes C_i) & \mapsto \hat{x}_i^\vee \otimes [B_i,C_i] \\
    (\hat{z}_i^\vee \otimes C_i) \wedge (\hat{x}_i^\vee \otimes A_i) & \mapsto \hat{y}_i^\vee \otimes [C_i,A_i]. 
\end{split}
\end{equation}

Let $\hat{Y}_{a_i}$ be the formal completion of ${Y}_{a_i} = \End_{\BC}(\BC^{a_i})^3$ at the origin and let $\hat{f}_{a_i}$ be the formal function 
\begin{align*}
    \hat{f}_{a_i} \colon \hat{Y}_{a_i} \to \BC, \quad \hat{f}_{a_i} (A_i,B_i,C_i) = \Tr A_i[B_i,C_i].
\end{align*}
Similarly, let 
\begin{align} \label{eqn:def of g hat}
    \hat{g}_{\boldit{a}} = \hat{f}_{a_1}\oplus \cdots\oplus \hat{f}_{a_k} \colon \hat{Y}_{\boldit{a}} = \prod_{i=1}^k \hat{Y}_{a_i} \to \BC
\end{align}
be the function $\hat{g}_{\boldit{a}}(v_1,\ldots, v_k) = \hat{f}_{a_1}(v_1) + \cdots + \hat{f}_{a_k}(v_k)$.

It follows immediately by \eqref{differential of C-E algebra} that
\begin{equation}
    \Spec \HH^0(\mathsf S) = \prod_{i=1}^k \crit ( \hat{f}_{a_i} ) = \crit \left( \bigoplus_{i=1}^k \hat{f}_{a_i} \right) = \crit (\hat{g}_{\boldit{a}}) \subset \hat{Y}_\bfa.
\end{equation}

We also note that by \cite{Brav-Dyckerhoff-II}, the pairing induced from the $-1$-shifted symplectic structure $\omega_n$ on the shifted tangent complex $\fg_E^{\min}$ and its components $\fg_{p_i}^{\min}\otimes \Mat_{a_i, a_i}$ coincides with the Serre duality pairing. We denote this pairing by $\zeta_E$. 

We may now prove the following key lemma.

\begin{lemma} \label{formal etale d-critical chart s_n^der}
Let $[E] \in \CM_n$ be a polystable sheaf of the form \eqref{polystable sheaf}, and set $\hat{U} = \crit(\hat{g}_{\boldit{a}})$, where $\hat{g}_{\boldit{a}} \colon \hat{Y}_{\boldit{a}} \to \BC$ is given by \eqref{eqn:def of g hat}. Then there exists a formally smooth morphism
\begin{equation} \label{eqn:psi_E hat}
\hat{\phi}_E \colon \hat{U} \to \CM_n,
\end{equation}
mapping $0 \in \hat{U}$ to $[E] \in \CM_n$, inducing an isomorphism at the level of tangent spaces  and an identity
\[
(\hat{\phi}_E)^\ast s_n^{\der} = \hat{g}_{\boldit{a}} + (\dd \hat{g}_{\boldit{a}})^2 \in \HH^0(\CS^0_{\hat{U}}).
\]
\end{lemma}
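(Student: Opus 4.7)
The plan is to verify that the formal chart outlined in the paragraphs preceding the statement has each of the three claimed properties; the candidate morphism has essentially already been produced, so the proof just assembles the ingredients.

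First, I would make precise the role of formality. The derived formal neighbourhood of $[E] \in \BCal{M}_n$ is controlled by the dg-Lie algebra $(\fg_E[1], [-,-])$ coming from $\fg_E = \RHom(E,E)$ with Lie bracket the graded commutator of matrices (cf.~\cite[Proposition~3.3]{Brav-Dyckerhoff-II}), and the formality of $\fg_E$ (established in the preceding corollary) allows one to replace this by the abelian dg-Lie algebra $\fg_E^{\min}[1]$ with bracket \eqref{commutator relations}. The Chevalley--Eilenberg cdga $\mathsf S$ of this minimal $L_\infty$-algebra provides a formal derived atlas $\boldsymbol{\Spec}\,\mathsf S \to \BCal{M}_n$ inducing an equivalence on tangent complexes at the base point. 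Passing to $\HH^0$, the differential \eqref{differential of C-E algebra} reads off to give $\HH^0(\mathsf S) = \OO(\crit(\hat g_{\boldit{a}}))$, so the classical truncation $\hat\phi_E\colon \hat U = \crit(\hat g_{\boldit{a}}) \to \CM_n$ is formally smooth, and the induced map on Zariski tangent spaces at $0$ is the identification $\bigoplus_i \gl_{a_i}^{\oplus 3} \simto \Ext^1(E,E)$.

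Second, I would invoke the derived formal Darboux theorem, as implemented in \cite[Appendix~A]{Toda:2017aa}, to conclude that $\boldsymbol{\Spec}\,\mathsf S \to \BCal{M}_n$ is in Darboux form with respect to $\omega_n$: equivalently, there is an equivalence $\boldsymbol{\Spec}\,\mathsf S \simeq \BRcrit(\hat g_{\boldit{a}})$ of $-1$-shifted symplectic derived schemes through which $\omega_n$ pulls back to the canonical shifted symplectic form $\omega_{\hat g_{\boldit{a}}}$ on the derived critical locus. Combining the naturality of the truncation functor $\tau$ from \Cref{thm:truncation} with its explicit value on a derived critical locus, namely $\tau(\BRcrit(\hat g_{\boldit{a}}),\omega_{\hat g_{\boldit{a}}}) = (\crit(\hat g_{\boldit{a}}),\hat g_{\boldit{a}} + (\dd \hat g_{\boldit{a}})^2)$ as recalled in \eqref{eqn:critical_s}, yields the desired identity $\hat\phi_E^\ast s_n^{\der} = \hat g_{\boldit{a}} + (\dd \hat g_{\boldit{a}})^2$ in $\HH^0(\CS^0_{\hat U})$.

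The main obstacle is the Darboux step: one must ensure that the formality quasi-isomorphism $I_E$ is compatible with the $-1$-shifted symplectic (equivalently, cyclic/Calabi--Yau) structure on $\fg_E$, so that the Chevalley--Eilenberg model of $\fg_E^{\min}[1]$ carries the \emph{standard} shifted symplectic form of a critical locus rather than a nontrivial twist. This is precisely the content of the cyclic refinement of formality used in Toda's appendix, via a homotopy transfer argument matching the Calabi--Yau pairing on $\fg_E$ with the canonical one on $\fg_E^{\min} \cong \bigoplus_i \Lambda^\bullet\BC^3 \otimes \Mat_{a_i,a_i}$. Granted this, the remaining claims are essentially bookkeeping, since the construction is visibly $\GL_n$-equivariant and the truncation of a Darboux chart of a derived critical locus is a d-critical chart by \eqref{eqn:critical_s}.
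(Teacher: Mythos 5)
Your proposal follows the paper's argument step by step: strict formality of $\fg_E$ (the dg-algebra morphism $I_E$ from the preceding lemma), passage to the Chevalley--Eilenberg cdga $\mathsf S$ of the resulting minimal $L_\infty$-algebra, identification of $\HH^0(\mathsf S)$ with $\OO(\crit(\hat g_{\boldit a}))$ via \eqref{differential of C-E algebra}, Toda's formal Darboux theorem from \cite[Appendix~A]{Toda:2017aa}, and finally truncation using \eqref{eqn:critical_s}. Your remark that cyclic/CY compatibility of the formality map is the crux of the Darboux step correctly surfaces a point the paper leaves implicit in its citation of Toda; the only slip to flag is calling $\fg_E^{\min}[1]$ \emph{abelian} when you mean \emph{minimal} (zero differential) — as you immediately use its nontrivial bracket \eqref{commutator relations}, without which $\HH^0(\mathsf S)$ would not reproduce the critical equations.
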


\begin{proof}
We continue to work formally locally at the point $[E] \in \BCal{M}_n$. Write $\lbrace \hat{y}_\ell \rbrace$ for the set of natural coordinates on $\hat{Y}_{\boldit{a}}$ induced by the equalities $Y_{a_i} = \End_{\BC}(\BC^{a_i})^3$.
Since ${\mathsf S}^{-1}$ is isomorphic to $T_{\hat{Y}_{\boldit{a}}}$ as an ${\mathsf S}^0$-module by definition, we have a natural basis $\lbrace \check{y}_\ell = \frac{\partial}{\partial \hat{y}_\ell} \rbrace$ of ${\mathsf S}^{-1}$, which is dual to the basis $\lbrace \dd_{\dR} \hat{y}_\ell \rbrace$. 

By the definition of $\mathsf S$ and properties of formal derived stacks and formal completions (see, for example, \cite{ShiftedPoisson} and \cite{TangentLieAlgebra}), there is a formally smooth morphism $\boldsymbol{\Spec} \ {\mathsf S} \to \BCal{M}_n$. Observe that, by construction, the morphism maps the closed point $0 \in \boldsymbol{\Spec} \ {\mathsf S}$ to $[E] \in \BCal{M}_n$ and is minimal at $0$, i.e. it induces an equivalence between the cotangent complexes of $\boldsymbol{\Spec} \ {\mathsf S}$ at $0$ and of $\BCal{M}_n$ at $[E]$ in non-positive degrees, and the restriction of the differential of the cotangent complex of $\boldsymbol{\Spec} \ {\mathsf S}$ is zero. 

Starting with this morphism as input, by \cite[Proposition~5.7]{BBJ}, it follows that the pullback of the closed $2$-form $\omega_n$ to $\boldsymbol{\Spec} \ {\mathsf S}$ can be brought to the form $(\omega_n^0, 0, 0, \ldots)$, where $\omega_n^0$ is a non-degenerate $-1$-shifted $2$-form that satisfies $\dd_{\dR} \omega_n^0 = \delta \omega_n^0 = 0$. More precisely, we can put this formally smooth chart in Darboux form, meaning that there exist $\Phi \in {\mathsf S}^0$ and $\phi \in \left( \Omega_{\mathsf S}^1 \right)^{-1}$, which satisfy the equations $\omega_n^0 = \dd_{\dR} \phi$, $\dd_{\dR} \Phi = \delta \phi$, and by \cite[Theorem~5.18]{BBJ}, we may take $\phi$ to be of the form $\phi = \sum_\ell \alpha_\ell \dd_{\dR} \hat{y}_\ell$, where $\alpha_\ell \in {{\mathsf S}^{-1}}$. The lemma will follow if we can show that $\Phi = \hat{g}_{\boldit{a}}$ and $\phi = \sum_\ell \check{y}_\ell \dd_{\dR} \hat{y}_\ell$, possibly after an automorphism of the cdga $\mathsf S$.

Now, $\omega_n^0 |_0 = \zeta_E = \sum_\ell \dd_{\dR} \check{y}_\ell|_0 \dd_{\dR} \hat{y}_\ell|_0$, so, by a graded application of \cite[Proposition~10.2, Lemma~10.3]{KontSoibFormal}, there exists an affine change of coordinates $\Psi$ for $\Spec \ {\mathsf S}^0$ such that with respect to these new coordinates, which we denote by $\hat{y}_\ell'$, we have $\omega_n^0 \otimes_{{\mathsf S}} {\mathsf S}^0 = \sum_\ell \dd_{\dR} \check{y}_\ell \dd_{\dR} \hat{y}_\ell'$.

Hence, still writing $\phi = \sum_\ell \alpha_\ell \dd_{\dR} \hat{y}_\ell'$ with respect to the new coordinates, we obtain $\omega_n^0 \otimes_{{\mathsf S}} {\mathsf S}^0 = \sum_\ell \dd_{\dR} \check{y}_\ell \dd_{\dR} \hat{y}_\ell' = \dd_{\dR} \phi \otimes_{{\mathsf S}} {\mathsf S}^0 = \sum_\ell \dd_{\dR} \alpha_\ell \dd_{\dR} \hat{y}_\ell'$, which shows that $\alpha_\ell = \check{y}_\ell$ for all values of the index $\ell$.

We now have a commutative diagram
\[
\begin{tikzcd}[row sep=large,column sep=large]
    {\mathsf S}^{-1}\arrow{r}{\sim} & T_{{\mathsf S}^0} \arrow[swap]{d}{(\dd \Psi)^{-1}}\arrow{r}{\dd \hat{g}_{\boldit{a}}^\vee} & {\mathsf S}^0\arrow{d}{\Psi} \\
    {\mathsf S}^{-1}\arrow{r}{\sim} & T_{{\mathsf S}^0}\arrow{r}{\dd (\hat{g}_{\boldit{a}} \circ \Psi)^\vee} & {\mathsf S}^0
\end{tikzcd}
\]
which extends to an automorphism of the cdga ${\mathsf S}$. By construction, the pullback of $\phi$ under this automorphism is equal to $\sum_\ell \check{y}_\ell' \dd_{\dR} \hat{y}_\ell'$, where $\check{y}_\ell'$ is the basis element of $ T_{{\mathsf S}^0}$ corresponding to the partial derivative $\frac{\partial}{\partial \hat{y}_\ell'}$.

In particular, after pulling back, we have equalities $\dd_{\dR} (\Phi \circ \Psi) = \delta \phi = \dd_{\dR} (\hat{g}_{\boldit{a}} \circ \Psi)$, so that $\Phi \circ \Psi = \hat{g}_{\boldit{a}} \circ \Psi$ and $\Phi = \hat{g}_{\boldit{a}}$, as desired.
\end{proof}

\begin{remark}
In a previous version of this paper, the claim that the natural morphism 
\begin{equation*}
    \boldsymbol{\Spec}\ \mathsf S \to \BCal{M}_n
\end{equation*}
gives a formal atlas for $\BCal{M}_n$ at the point $[E]$ which is \textit{already} in Darboux form with respect to the $-1$-shifted symplectic structure $\omega_n$ 
was based on an appendix in the paper \cite{Toda:2017aa}. This has since been removed, so we have added an independent proof of the lemma to reflect this change.

The basic idea behind the proof is that a shifted symplectic form on a formal scheme can be taken to have constant coefficients after reparametrization. In particular, its value at the unique closed point should determine a Darboux form in a unique way. Hence, if the formal scheme is already in Darboux form and the restriction of the naturally associated symplectic structure at the point matches another given one, then the Darboux forms and symplectic structures must match as well. In this spirit, the claim is a simple, particular case of a general principle, whose proof we expect will appear in full form in the literature sooner rather than later.
\end{remark}

\subsection{The two d-critical structures are equal} \label{subsec:d-crit-agree}

We now prove \Cref{thm:comparison_d-crit_1}, showing that the two d-critical structures studied in this section actually coincide.

\begin{theorem}
\label{thm:comparison_d-crit}
The isomorphism $\iota_n \colon [U_n/\GL_n] \simto \CM_n$ from \eqref{critical_Mn} induces an identity
\[
\iota_n^\ast s_n^{\der} = s_n^{\crit}
\]
of algebraic d-critical structures.
\end{theorem}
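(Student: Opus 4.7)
The plan is to verify $\iota_n^\ast s_n^{\der} = s_n^{\crit}$ locally at each closed point of $\CM_n$. By the sheaf property of $\CS^0_{\CM_n}$, together with the fact that its local models embed into coherent sheaves of the form $\OO_V/\mathscr{I}^2$ and the faithful flatness of completion for coherent modules, two sections agree as soon as they have the same image in a formally smooth hull at every closed point. The closed points of $\CM_n$ are precisely the polystable sheaves $E = \bigoplus_{i=1}^k \BC^{a_i} \otimes \OO_{p_i}$ with pairwise distinct $p_i$, so I fix such an $[E]$ and work near it.

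The two preceding lemmas furnish local d-critical charts at $[E]$ built on the same function $g_\bfa$. The algebraic \'etale chart $\phi_E\colon [U/\GL_\bfa] \to [U_n/\GL_n]$ with $U = \crit(g_\bfa|_V)$ satisfies $\phi_E^\ast s_n^{\crit} = g_\bfa|_V + (\dd g_\bfa|_V)^2$, while the formal Darboux chart $\hat\phi_E\colon \hat U \to \CM_n$ with $\hat U = \crit(\hat g_\bfa)$ satisfies $\hat\phi_E^\ast s_n^{\der} = \hat g_\bfa + (\dd \hat g_\bfa)^2$. The translation $A_i = \alpha_i\Id_{a_i} + \tilde A_i$, $B_i = \beta_i\Id_{a_i} + \tilde B_i$, $C_i = \gamma_i\Id_{a_i} + \tilde C_i$ identifies the formal completion of $Y_\bfa$ at $v_E$ with $\hat Y_\bfa$, and under this change of coordinates $g_\bfa|_V$ is carried to $\hat g_\bfa$: the central contributions cancel, since $[\beta_i\Id + \tilde B_i,\gamma_i\Id + \tilde C_i] = [\tilde B_i,\tilde C_i]$. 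Thus the formal completion of $[U/\GL_\bfa]$ at $\overline v_E$ is canonically $\hat U$, and the restriction of $\iota_n^\ast s_n^{\crit}$ to $\hat U$ coincides with $\hat g_\bfa + (\dd \hat g_\bfa)^2$.

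It remains to compare the two formally smooth morphisms $\hat U \to \CM_n$, namely $\hat\phi_E$ and the formal completion of $\iota_n \circ \phi_E$. Both send the base point to $[E]$ and induce isomorphisms on tangent spaces: the tangent space of $\hat U$ at $0$ is the entire $Y_\bfa$ because the Hessian of the cubic function $\hat g_\bfa$ vanishes at the origin, and $Y_\bfa \cong \Ext^1(E,E)$ via the identification \eqref{min model of g_E algebra}. Hence both are formal hulls of $\CM_n$ at $[E]$ and differ by a formal automorphism $\eta$ of $\hat U$. The central task is to verify that $\eta$ preserves the d-critical structure. Following the strategy of \cite[Appendix~A]{Toda:2017aa}, this reduces to the observation that both charts realise the truncated Darboux form of the $-1$-shifted symplectic structure $\omega_n$ on $\BCal{M}_n$, whose cubic potential is rigidly determined by the minimal $A_\infty$-structure on $\RHom(E,E)$ encoded in the bracket relations \eqref{commutator relations}. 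Uniqueness of Darboux form (in the sense of \cite{BBJ,BBBJ}) then forces $\eta^\ast(\hat g_\bfa + (\dd \hat g_\bfa)^2) = \hat g_\bfa + (\dd \hat g_\bfa)^2$, so the two pullbacks coincide on $\hat U$ and, varying $[E]$, on all of $\CM_n$.

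The principal obstacle is precisely this Darboux-uniqueness step: bridging the algebraic Luna slice chart with the formal chart coming from derived deformation theory, and showing that the comparison isomorphism of formal hulls is compatible with the d-critical data. Once this compatibility is established, the remainder of the argument is a routine coherent-sheaf bookkeeping.
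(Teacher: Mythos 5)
Your proposal correctly identifies the paper's overall strategy: compare locally at polystable $[E]$, using the Luna-slice chart from \Cref{etale d-critical chart s_n^crit} and the formal Darboux chart from \Cref{formal etale d-critical chart s_n^der}, then reconcile the two via an automorphism of the formal hull $\hat U$. However, the central step is not justified by the tool you invoke.

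You reduce everything to showing that the comparison automorphism $\eta$ (which the paper calls $\gamma$) of $\hat U$ preserves the class $\hat g_\bfa + (\dd\hat g_\bfa)^2$, and you claim this follows from ``uniqueness of Darboux form'' in \cite{BBJ,BBBJ}. That is not what the Darboux theorem says. It establishes \emph{existence} of Darboux charts; it does not assert that two Darboux presentations at a point are related by a \emph{potential-preserving} automorphism of the formal chart. Indeed, the potential function in a d-critical chart is only well-defined modulo $(\dd f)^2$ and coordinate change, and the d-critical structure \emph{is} precisely this equivalence class --- so asserting that ``Darboux uniqueness forces $\eta^\ast(\hat g_\bfa + (\dd\hat g_\bfa)^2) = \hat g_\bfa + (\dd\hat g_\bfa)^2$'' is essentially asserting the conclusion. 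The paper instead argues directly: writing $h = \hat g_\bfa \circ \gamma$, the sheaf axioms for $\CS^0$ give $\hat k - h \in (\dd\hat g_\bfa)^2$ for the polynomial representative $k$ of $\phi_E^\ast\iota_n^\ast s_n^{\der}$; since $(\dd\hat g_\bfa)^2$ is generated by quartics, comparing cubic terms shows $\hat k_3 = h_3$, and a careful truncation of the power series $\gamma$ to a polynomial isomorphism $\hat\rho$ yields a polynomial $h' = \hat g_\bfa \circ \hat\rho$ with $h - h' \in (\dd\hat g_\bfa)^2$. This polynomial approximation is essential because the final descent step (the last proposition in the section, using faithful flatness of completion) requires comparing two honest regular functions on $V$, not a regular function against a formal power series. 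You acknowledge faithful flatness at the outset, but without the degree-counting and polynomial-truncation argument there is no pair of regular functions to which that principle can be applied, and your proof does not close.
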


\begin{proof} 
Let $[E] \in \CM_n$ be a polystable sheaf, given by the expression \eqref{polystable sheaf}.
Applying Lemma~\ref{etale d-critical chart s_n^crit}, we have an \'{e}tale morphism $\psi_E \colon [U / \GL_{\boldit{a}}] \to [U_n / \GL_n]$ such that the pullback under the induced morphism $\phi_E \colon U \to [U / \GL_{\boldit{a}}] \to [U_n / \GL_n]$ satisfies 
\begin{equation} \label{eq crit}
\phi_E^* s_n^{\crit} = g_{\boldit{a}}|_{V} + (\dd g_{\boldit{a}}|_{V})^2 \in \HH^0(\CS_{U}^0)^{\GL_{\boldit{a}}} \subset \HH^0(\CS_{U}^0),
\end{equation}
where $V \subset Y_{\boldit{a}}$ is an open neighbourhood of the point $v_E \in Y_{\boldit{a}}$ corresponding to $[E]$ and $g_{\boldit{a}}$ is given by \eqref{def of g_a}.
By Lemma~\ref{formal etale d-critical chart s_n^der}, we have a formal atlas
\[
\hat{\phi}_E \colon \hat{U} \to \CM_n, \qquad 0 \mapsto [E],
\]
satisfying
\begin{equation} \label{eq der}
\hat{\phi}_E^\ast (s_n^{\der}) = \hat{g}_{\boldit{a}} + (\dd \hat{g}_{\boldit{a}})^2 \in \HH^0(\CS_{\hat{U}}^0).
\end{equation}

Let $j_m \colon U_{E,m} \to U$ be the $m$-th order thickening of $v_E \in U$. We get induced morphisms $\iota_n \circ \phi_E \circ j_m \colon U_{E,m} \to \CM_n$. Since $\hat{\phi}_E \colon \hat{U} \to \CM_n$ is formally smooth, this compatible system lifts via $\hat{\phi}_E$ to give rise to a morphism $\gamma \colon \hat{U} \to \hat{U}$ such that $\hat{\phi}_E \circ \gamma = \iota_n \circ \phi_E \circ \eta$, where $\eta \colon \hat{U} \to U$ is the formal completion morphism of $U$ at $v_E$. That is, we have a commutative diagram
\[
\begin{tikzcd}
\hat{U}\arrow[swap]{dr}{\eta}\arrow{r}{\gamma} & \hat{U}\arrow{drr}{\hat{\phi}_E} & & \\
& U\arrow[swap]{r}{\phi_E} & {[}U_n/\GL_n{]}\arrow[swap]{r}{\iota_n} & \CM_n
\end{tikzcd}
\]
where, since $\iota_n \circ \phi_E$ and $\eta$ are isomorphisms to first order at $v_E$ (i.e.~at the level of tangent spaces), the same is true for $\gamma$, so $\gamma$ is an isomorphism.

The inclusion $\hat{U} \subset \hat{V}$ is cut out by quadrics (the derivatives of the cubic function $g_{\boldit{a}}$) and thus $\gamma$ extends to an isomorphism $\gamma \colon \hat{V} \to \hat{V}$. Let $h = \hat{g}_{\boldit{a}} \circ \gamma$ so that obviously $(\dd h)=(\dd \hat{g}_{\boldit{a}})$. The commutativity of the diagram, along with \Cref{eq der}, implies that
\begin{equation*} 
    \eta^\ast \phi_E^\ast \iota_n^\ast (s_n^{\der}) = \gamma^* \hat{\phi}_E^\ast (s_n^{\der}) = h + (\dd h)^2 \in \HH^0(\CS^0_{\hat{U}}).
\end{equation*}

After possible further shrinking of $V$ around $v_E$, we have 
\begin{equation} \label{loc 2.25}
\phi_E^\ast \iota_n^\ast (s_n^{\der}) = k + (\dd k)^2
\end{equation}
for some function $k \colon V \to \BC$ satisfying $(\dd k) = (\dd g_{\boldit{a}}|_V)$.
Therefore, taking completions, we find
\begin{equation*}
    \eta^\ast \phi_E^\ast \iota_n^\ast (s_n^{\der}) = \hat{k} + (\dd \hat{k})^2 \in \HH^0(\CS^0_{\hat{U}}).
\end{equation*}
Since $(\dd h)=(\dd \hat{k})=(\dd \hat{g}_{\boldit{a}})$ it follows that $\hat{k}$ has no quadratic terms and we must have $\hat{k} - h \in (\dd \hat{g}_{\boldit{a}})^2$. The ideal $ (\dd \hat{g}_{\boldit{a}})^2$ is generated by quartics so by comparing cubic terms we deduce that
\begin{align*}
    \hat{k}_3 = \hat{g}_{\boldit{a}} \circ \dd \gamma = h_3
\end{align*}
where $\hat{k}_3, h_3$ are the cubic terms of $\hat{k}, h$ respectively. As $k$ is a polynomial, we get for the higher order terms
\begin{align*}
    \hat{k} - h = \hat{k}_{\geq 4} - h_{\geq 4} & \in (\dd \hat{g}_{\boldit{a}})^2 \\
    h_{\geq \deg k} &\in (\dd \hat{g}_{\boldit{a}})^2.
\end{align*}
Since by definition $h = \hat{g}_{\boldit{a}} \circ \gamma$, where $\hat{g}_{\boldit{a}}$ is a cubic, we may truncate the power series defining $\gamma$ at sufficiently high degree to get a polynomial isomorphism $\hat{\rho} \colon \hat{V} \to \hat{V}$ such that $h' = \hat{g}_{\boldit{a}} \circ \hat{\rho}$ still satisfies $(\dd h') = (\dd h) = (\dd \hat{g}_{\boldit{a}})$ and $h' - h$ consists of summands of $h$ of degree at least $\deg k$. By the above, $h - h' \in (\dd \hat{g}_{\boldit{a}})^2$ and therefore
\begin{equation} \label{loc 2.26}
    \eta^\ast \phi_E^\ast \iota_n^\ast (s_n^{\der}) = \hat{k} + (\dd \hat{k})^2 = h + (\dd h)^2 = h' + (\dd h')^2 \in \HH^0(\CS^0_{\hat{U}}).
\end{equation}
But$V$ is an open subscheme of an affine space and hence $\hat{\rho}$ is the completion of an isomorphism $\rho \colon V \to V$. In particular, $h'$ is the completion of the function $g_{\boldit{a}}' = g_{\boldit{a}}|_V \circ \rho \colon V \to \BC$,
\begin{equation} \label{loc 2.27}
    h' = \hat{g_{\boldit{a}}'} \colon \hat{V} \to \BC.
\end{equation}

Writing $I = (\dd g_{\boldit{a}}|_V) = (\dd g_{\boldit{a}}')$, the commutative diagram 
\[
\begin{tikzcd}[row sep=large,column sep=large]
0 \arrow{r}
& \CS_U\arrow{r}\arrow[equal]{d}
& \OO_{V}/I^2\arrow{r}{\dd}\arrow{d}{\rho^\ast} 
& \Omega_{V} / I \cdot \Omega_{V} \arrow{d}{\rho^\ast}  \\
0 \arrow{r}
& \CS_U\arrow{r}
& \OO_{V}/I^2\arrow{r}{\dd}
& \Omega_{V} / I \cdot \Omega_{V}
\end{tikzcd}
\]
and \eqref{eq crit} show that
\begin{equation} \label{loc 2.28}
\phi_E^* (s_n^{\crit}) = g_{\boldit{a}}' + (\dd g_{\boldit{a}}')^2 \in \HH^0(\CS^0_{U}).
\end{equation}

Using equations \eqref{loc 2.25}, \eqref{loc 2.26}, \eqref{loc 2.27} and \eqref{loc 2.28}, the following proposition applied to the functions $k, g_{\boldit{a}}' \colon V \to \BC$ shows that there exists a Zariski open neighbourhood $v_E \in U' \subset U$ and a smooth morphism 
\[
\phi_E' \colon U' \to U \xrightarrow{\phi_E} [U_n/\GL_n]
\]
such that 
\begin{equation*}
    (\phi_E')^\ast s_n^{\crit} = (\phi_E')^\ast \iota_n^\ast s_n^{\der} \in \HH^0(\CS^0_{U'}).
\end{equation*}

As $[E]$ varies in $\CM_n$, the morphisms $\phi_E'$ give a smooth surjective cover of $\CM_n$, on which the d-critical structures $s_n^{\crit}$ and $\iota_n^\ast s_n^{\der}$ agree. Hence we must have $s_n^{\crit} = \iota_n^\ast s_n^{\der}$, as desired.
\end{proof}

\begin{prop} \label{prop: 3.12} Let $V$ be a smooth scheme
and let $f_1, f_2 \colon V \to \BC$ be two  
functions with $(\dd f_1) = (\dd f_2) =I$ such that $U = \crit(f_1) = \crit(f_2) \subset V$. Fix a point $u \in U$.
Let $\eta \colon \hat{U} \to U$ be the formal completion of $U$ at the point $u$ and $\hat{I} = I \otimes_{\OO_V} \OO_{\hat{V}}$ the ideal of $\hat{U}$ in $\hat{V}$. 

Suppose that 
\begin{equation} \label{equality of d-crit at formal level}
f_1 + \hat{I}^2 = f_2 + \hat{I}^2 \in \HH^0(\CS^0_{\hat{U}}).
\end{equation}
Then there exists a Zariski open neighbourhood $V' \subset V$ of $u$ in $V$ such that for $U' = U \times_V V'$ we have
\[
f_1\big|_{V'} + I\big|_{V'}^2 = f_2\big|_{V'} + I\big|_{V'}^2 \in \HH^0(\CS^0_{U'}).
\]
\end{prop}

\begin{proof}
Since the problem is local in $V$, we may assume that $V$ is affine. Write $\fm$ for the maximal ideal of the closed point $0 \in \hat{V}$ and $\fm_u$ for the ideal of $u \in V$. We use $\eta \colon \hat{V} \to V$ to denote the formal completion map as well. Let $V_{\loc}$ denote the localisation of $V$ at $u$. The map $\eta$ clearly factors through the localisation morphism $V_{\loc} \to V$ by a local, faithfully flat morphism $\mu \colon \hat{V} \to V_{\loc}$.

Let $\delta = f_1 - f_2 \colon \OO_{V,u} \to \OO_{V,u}/I^2$. The assumption \eqref{equality of d-crit at formal level} implies that $\mu^\ast \delta = 0$. Therefore, since $\mu$ is faithfully flat, we must have $\delta = 0$.

Hence there exist functions $h \in I^2, g \in \OO_{V,u} \setminus \fm_u$ and a positive integer $N$ such that
\[ 
g^N(f_1 - f_2) = h \in I^2.
\]
Letting $V'$ be the non-vanishing locus of $g$ completes the proof.
\end{proof}

\subsection{Some remarks on the proof of \Cref{thm:comparison_d-crit_1}} 
We finally collect some observations to conclude the section and suggest an alternative, stronger approach to prove \Cref{thm:comparison_d-crit_1}.

One can check that the classical truncation of the isomorphism 
$$\begin{tikzcd}
    \boldsymbol{\iota}_n \colon {\bigl[}\dSpec (Q_3)_n / \GL_n{\bigr]} \arrow{r}{\sim} & \BCal{M}_n
\end{tikzcd}$$ 
given in \eqref{description of derived M_n} is the isomorphism
$$\begin{tikzcd}
\iota_n \colon [U_n / \GL_n] \arrow{r}{\sim} & \CM_n.
\end{tikzcd}$$

The target $\BCal{M}_n$ of $\boldsymbol{\iota}_n$ admits the $-1$-shifted symplectic structure $\omega_n$. 

On the other hand, using the model for shifted symplectic forms for (affine) derived quotient stacks described in \cite[Section~1]{PTVV}, we can construct an explicit $-1$-shifted symplectic form $\omega = (\omega^0, 0, 0, \ldots)$ on ${\bigl[}\dSpec (Q_3)_n / \GL_n{\bigr]}$, combining the $-1$-shifted symplectic forms for $\BRcrit(f_n)$ and $\mathrm{B}\GL_n$, such that the induced d-critical structure on the truncation $[U_n / \GL_n]$ is $s_n^{\crit}$.

\Cref{thm:comparison_d-crit_1} will then follow from the stronger statement that $\boldsymbol{\iota}_n^\ast \omega_n = \omega$ as $-1$-shifted symplectic forms. Since the appearance of the present paper, the equality $\boldsymbol{\iota}_n^\ast \omega_n = \omega$ has been established by Katz--Shi \cite{KatzShi}.

We have chosen to pursue a local argument to obtain the equality of d-critical structures using the derived deformation theory of $\BCal{M}_n$. This gives us more control over the derived tangent complex and explicit Darboux charts, which we are comfortable with, and also avoids possible technicalities on shifted symplectic structures on derived quotient stacks. Our local approach should be applicable any time one can get a concrete handle on the formal completion of a derived stack at a point.

\section{The d-critical structure(s) on \texorpdfstring{$\Quot_{\BA^3}(\OO^{\oplus r},n)$}{}}

Fix integers $n\geq 0$ and $r\geq 1$. Recall from \Cref{critical_quot} the critical structure
\[
\begin{tikzcd}
\NCQuot^n_r\,\supset\, \crit(f_{r,n}) \arrow{r}{\sim} & \mathrm{Q}_{r,n} = \Quot_{\BA^3}(\OO^{\oplus r},n)
\end{tikzcd}
\]
on the Quot scheme $\mathrm{Q}_{r,n}$ of length $n$ quotients $\OO^{\oplus r} \onto E$ of the trivial rank $r$ sheaf on $\BA^3$. In this section, we compare two d-critical structures
\[
s_{r,n}^{\crit} \in \HH^0\left(\CS^0_{\crit(f_{r,n})}\right),\quad s_{r,n}^{\der} \in \HH^0\left(\CS^0_{\mathrm{Q}_{r,n}}\right),
\]
by bootstrapping the arguments in our discussion of the d-critical structure of $\CM_n$ in the preceding section.

To motivate, let $\mathbf{Q}_{r,n}$ be the derived Quot scheme \cite{DerivedQuot}. Then we have a commutative diagram
\begin{equation}
\label{diagram_main_players2}
\begin{tikzcd}[row sep = large,column sep=large]
 \mathrm{Q}_{r,n}\arrow[hook]{r}{j_{r,n}}\arrow[swap]{d}{q_{r,n}} & \mathbf{Q}_{r,n}\arrow{d}{\boldit{q}_{r,n}} \\
 \CM_n\arrow[hook]{r}{j_n} & \BCal{M}_n
\end{tikzcd}    
\end{equation}
where $j_{r,n}$ and $j_n$ are the inclusions of the underlying classical spaces and $q_{r,n}$ is the forgetful morphism taking $[\OO^{\oplus r} \onto E] \in \mathrm Q_{r,n}$ to $[E] \in \CM_n$, which can be viewed as the truncation of its derived enhancement $\boldit{q}_{r,n}$.

\subsection{The d-critical structure $s_{r,n}^{\crit}$ coming from quiver representations} 
The d-critical structure
\[
s_{r,n}^{\crit} = f_{r,n} + (\dd f_{r,n})^2 
\]
on $\crit(f_{r,n}) \into \NCQuot^n_r$
admits a local description similar to that of $s_n^{\crit}$. We fix some notation for convenience, extending the corresponding notation used for $\CM_n$ in \Cref{subsection: s_n^crit}. 

Let $Y_{r,n} = \Rep_{(1,n)}(\widetilde{L}_3) = \End_{\BC}(\BC^n)^3 \oplus \Hom_{\BC}(\BC, \BC^n)^{r} = Y_n \oplus Z_{r,n}$, i.e.~we denote by $Z_{r,n}$ the space of $r$-tuples $(v_1,\ldots,v_r)$ of vectors $v_i \in \BC^n$. Let $f_{r,n} \colon Y_{r,n} \to \BA^1$ be the regular (and $\GL_n$-invariant) function induced by the potential $W = A[B,C]$. It is crucial to observe that this function does not interact with the component $Z_{r,n}$. According to \Cref{critical_quot}, there is a stability condition $\theta$ on $\widetilde L_3$ along with a commutative diagram
\[
\begin{tikzcd}[row sep=large,column sep=large]
& Y_{r,n}^{\theta\mathrm{-st}} \arrow[hook]{r}{\textrm{open}} \arrow[swap]{d}{/\GL_n} & Y_{r,n}\arrow{d}{f_{r,n}}\\
Y_{r,n} \sslash_\theta \GL_n\arrow[equal]{r} & \NCQuot^n_r\arrow{r}{f_{r,n}} & \BA^1
\end{tikzcd}
\]
defining the noncommutative Quot scheme, on which the function $f_{r,n}$ descends, and there is an isomorphism $\iota_{r,n}\colon \crit(f_{r,n}) \simto \mathrm Q_{r,n}$. For instance, one could take $\theta = (n,-1)$ for any fixed $n>0$.

Let ${\boldit{a}} = (a_1,\ldots, a_k)$ denote a $k$-tuple of positive integers such that $\sum_{1\leq i\leq k}a_i = n$.
Denote by $Q$ the quiver

\begin{figure}[ht]
\centering
\begin{tikzpicture}[>=stealth,->,shorten >=2pt,looseness=.5,auto]
\node (P) at (0,-1.7) {$\ldots$};
\node (Q) at (-1.5,0.05) {$\ddots$};
\node (R) at (1.5,0) {$\iddots$};
\node (V) at (-1.9,0.35) {$v_1^{(1)}$};
\node (W) at (-0.9,-0.3) {$v_r^{(1)}$};
\node (T) at (2.05,0.35) {$v_1^{(k)}$};
\node (X) at (0.8,-0.3) {$v_r^{(k)}$};
  \matrix [matrix of math nodes,
           column sep={3cm,between origins},
           row sep={3cm,between origins},
           nodes={circle, draw, minimum size=5.5mm}]
{ 
& |(A)| \infty & \\
|(B)| 1 & &  |(C)| k \\   
};
\tikzstyle{every node}=[font=\small\itshape]
\path[->] (B) edge [loop below] node {$B_1$} ()
              edge [loop right] node {$C_1$} ()
              edge [loop left] node {$A_1$} ();
\path[->] (C) edge [loop below] node {$B_k$} ()
              edge [loop right] node {$C_k$} ()
              edge [loop left] node {$A_k$} ();
\draw (A) to [bend left=13,looseness=1] (B) node [midway,above] {};
\draw (A) to [bend right=11,looseness=1] (B) node [midway,above] {};
\draw (A) to [bend left=13,looseness=1] (C) node [midway,above] {};
\draw (A) to [bend right=11,looseness=1] (C) node [midway,above] {};
\end{tikzpicture}
\end{figure}
\noindent
so that
\[
Y_{r,\boldit{a}} = \Rep_{(1,a_1,\ldots,a_k)}(Q) = \prod_{i=1}^k Y_{r,a_i} = \prod_{i=1}^k \Rep_{(1,a_i)}(\widetilde{L}_3).
\]
Consider the stability condition $\boldit{\theta} = (n,-1,\ldots,-1)$ on $Q$, so that a $Q$-representation with dimension vector $(1,a_1,\ldots,a_k)$ has slope $0$. We then have the following.
\begin{lemma}\label{lemma:stable_loci_compared}
Set $\theta_i = (a_i,-1)$ and $\boldit{\theta} = (n,-1,\ldots,-1)$. Then there are open immersions
\[
Y_{r,\boldit{a}}^{\boldit{\theta}\mathrm{-st}} \into \prod_{i=1}^k Y_{r,a_i}^{\theta_i\mathrm{-st}}    \into Y_{r,\boldit{a}}.
\]
\end{lemma}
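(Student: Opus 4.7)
The plan is to establish the stronger statement $Y_{r,\boldit{a}}^{\boldit{\theta}\text{-st}} = \prod_{i=1}^k Y_{r,a_i}^{\theta_i\text{-st}}$ as subsets of $Y_{r,\boldit{a}}=\prod_i Y_{r,a_i}$; once this is shown, both inclusions in the statement become open immersions, the second because each $Y_{r,a_i}^{\theta_i\text{-st}}\into Y_{r,a_i}$ is open by the general GIT formalism, and the first simply being the identity.

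First I would classify $Q$-subrepresentations of a point of $Y_{r,\boldit{a}}$. Since $Q$ has no arrows between distinct finite vertices $i\neq j$ and the loops $A_i, B_i, C_i$ are supported only at vertex $i$, every $Q$-subrepresentation $W$ splits canonically as $W=W_\infty \oplus \bigoplus_i W_i$, with $W_\infty\in\{0,\BC\}$ and each $W_i\subset V_i=\BC^{a_i}$ closed under $A_i,B_i,C_i$; if $W_\infty=\BC$, then $W_i$ must additionally contain every framing vector $v^{(i)}_j$. A short slope computation with $\boldit{\theta}=(n,-1,\ldots,-1)$ shows that subreps with $W_\infty=0$ always have slope $-1<0$ and so never violate stability, while a proper subrep with $W_\infty=\BC$ of dimension $(1,b_1,\ldots,b_k)$ has strictly positive slope unless $b_i=a_i$ for every $i$. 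Hence $\boldit\theta$-stability is equivalent to the condition that the framing vectors $\{v^{(i)}_j\}_{i,j}$ jointly generate the full $Q$-representation.

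The analogous slope calculation on the single-vertex quiver $\widetilde{L}_3$ with stability $\theta_i=(a_i,-1)$ shows that $\theta_i$-stability of the $i$-th factor is equivalent to $\{v^{(i)}_j\}_j$ generating $V_i$ as a $\widetilde{L}_3$-representation. Because the loops at vertex $i$ act only on $V_i$, the global generation condition on $Q$ decouples into the product of these factor-wise conditions, yielding the desired equality of stable loci. The argument is entirely combinatorial; the only mild subtlety lies in the bookkeeping on dimension vectors, and no geometric input beyond the structural description of subrepresentations is required.
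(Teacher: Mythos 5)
Your argument is correct, and in fact proves the stronger assertion that $Y_{r,\boldit{a}}^{\boldit{\theta}\text{-st}}$ and $\prod_{i=1}^k Y_{r,a_i}^{\theta_i\text{-st}}$ \emph{coincide} as open subsets of $Y_{r,\boldit{a}}$, whereas the lemma only claims (and the paper only proves) the left-to-right inclusion. The paper's argument is a short contrapositive: supposing some factor $M_i$ admits a destabilising subrepresentation $N_i$ with $\underline\dim N_i = (\dd_\infty,\dd_i)$, it forces $\dd_\infty=1$, $\dd_i<a_i$, plugs $N_i$ into the $i$-th slot to form $N=(M_1,\ldots,N_i,\ldots,M_k)\subsetneq M$, and computes $(\underline\dim N)\cdot\boldit\theta=a_i-\dd_i>0$, contradicting $\boldit\theta$-stability of $M$. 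Your route instead characterises both $\boldit\theta$- and $\theta_i$-stability as ``the framing vectors generate the representation'' (the paper records this characterisation for a single framed $3$-loop quiver in the remark following \cref{critical_quot}), notices that the generation condition decouples across the $k$ finite vertices of $Q$ because they carry no arrows between them, and thereby obtains the equality of stable loci outright. Two small points of precision: a subrepresentation is a tuple of invariant subspaces $(W_\infty,W_1,\ldots,W_k)$ rather than a direct sum decomposition of $W$, and the $\boldit\theta$-pairing of a nonzero subrepresentation with $W_\infty=0$ equals $-\sum_i b_i$, not $-1$; what you actually use, and correctly so, is only that this pairing is strictly negative. The additional information (equality rather than inclusion) is genuine but is not exploited later in the paper, which only needs the first arrow to be an open immersion.
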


\begin{proof}
By the very definition of $Y_{r,\boldit{a}}$, only the first inclusion needs a proof. Set $\boldit{d} = (1,a_1,\ldots,a_k)$ and note that $\boldit{d} \cdot \boldit{\theta}=0$. Any $\boldit{d}$-dimensional $Q$-representation $M$ can be written in the form $M=(M_1,\ldots,M_k)$, where $M_i \in \Rep_{(1,a_i)}(\widetilde L_3)$. We want to show that if such an $M$ is $\boldit{\theta}$-stable, then each $M_i$ is $\theta_i$-stable. So let us assume by contradiction that there exists $0\neq N_i \subsetneq M_i$ such that $(\underline{\dim}\, N_i)\cdot (a_i,-1)\geq 0$ for some $1\leq i\leq k$. Now, write $\underline{\dim}\, N_i = (\dd_\infty,\dd_i)$ where $0\leq \dd_\infty\leq 1$ and $0\leq \dd_i\leq a_i$, so the inequality we are assuming is $\dd_\infty a_i-\dd_i \geq 0$, from which it follows that $\dd_\infty=1$ (otherwise $N_i = 0$), and hence $\dd_i < a_i$ (otherwise $N_i = M_i$).

Consider now the proper nonzero subrepresentation $N = (M_1,\ldots,M_{i-1},N_i,M_{i+1},\ldots,M_k) \subset M$, whose dimension vector is $(1,a_1,\ldots,a_{i-1},\dd_i,a_{i+1},\ldots,a_k)$. We have 
\begin{align*}
    (\underline{\dim}\, N)\cdot \boldit{\theta} 
    &= (1,a_1,\ldots,a_{i-1},\dd_i,a_{i+1},\ldots,a_k)\cdot (n,-1,\ldots,-1)\\
    &= n-\left(\sum_{j\neq i} a_j\right) -\dd_i \\
    &= \sum_ja_j-\left(\sum_{j\neq i} a_j\right) -\dd_i \\
    &= a_i-\dd_i > 0,
\end{align*} contradicting stability of $M$. The result follows.
\end{proof}

It is clear from the definitions and \Cref{lemma:stable_loci_compared} that if a representation
\[
\left(A_i,B_i,C_i,v_1^{(i)},\ldots,v_r^{(i)}\right)_{1\leq i\leq k} \in Y_{r,\boldit{a}}
\]
is $\boldit{\theta}$-stable then the $n$-dimensional (unframed) representation
\[
(A_1\oplus \cdots \oplus A_k,B_1\oplus \cdots \oplus B_k,C_1\oplus \cdots \oplus C_k) \in \Rep_n(L_3)
\]
is spanned by the vectors
\[
v_j = 
\begin{pmatrix}
v_j^{(1)} \\
v_j^{(2)} \\
\vdots \\
v_j^{(k)}
\end{pmatrix} \in \BC^n,\quad 1\leq j\leq r.
\]
There is a closed embedding $\Phi_{r,\boldit{a}} \colon Y_{r,\boldit{a}} \into Y_{r,n}$, which on $Y_n$ restricts to the embedding $\Phi_{\boldit{a}} \colon Y_{\boldit{a}} \into Y_n$ (considered in \Cref{subsection: s_n^crit}) and concatenates the elements of $\prod_{i=1}^k Z_{r,a_i}$ to produce an element of $Z_{r,n}$.

The reductive algebraic group $\GL_{\boldit{a}} = \prod_{i=1}^k \GL_{a_i}$ acts on $Y_{r,\boldit{a}}$ by componentwise conjugation on the summand $Y_{\boldit{a}}$ and $\Phi_{r,\boldit{a}}$ is equivariant with respect to the inclusion $\GL_{\boldit{a}} \subset \GL_n$ by block diagonal matrices of the same kind.

On the space $Y_{r,\boldit{a}}$ we have the $\GL_{\boldit{a}}$-invariant potential
\begin{equation} \label{def of g_a_r}
g_{r,\boldit{a}} = f_{r,a_1}\oplus \cdots\oplus f_{r,a_k} \colon Y_{r,\boldit{a}} \to \BA^1, \quad (A_i,B_i,C_i,Z_i)_i \mapsto \sum_{i=1}^k \Tr A_i [B_i, C_i],
\end{equation}
where $(A_i, B_i, C_i) \in Y_{a_i}$ and $Z_i = (v_1^{(i)},\ldots,v_r^{(i)}) \in Z_{r,a_i} = \Hom_{\BC}(\BC,\BC^{a_i})^{r}$ for $i= 1,\ldots, k$. 
We have the obvious relation $f_{r,n} \circ \Phi_{r,\boldit{a}} = g_{r,\boldit{a}}$, and we observed above that the $\boldit{\theta}$-stable locus embeds in the $\theta$-stable locus, which yields a commutative diagram
\[
\begin{tikzcd}
Y_{r,\boldit{a}}^{\boldit{\theta}\mathrm{-st}} \arrow[hook]{r}\arrow[hook]{d} & Y_{r,n}^{\theta\mathrm{-st}}\arrow[hook]{d} &  \\
Y_{r,\boldit{a}}\arrow[bend right=25]{rr}[description]{g_{r,\boldit{a}}}\arrow[hook]{r}{\Phi_{r,\boldit{a}}} & Y_{r,n}\arrow{r}{f_{r,n}} & \BA^1
\end{tikzcd}
\]
where the vertical arrows are open immersions and $\theta=(n,-1)$.

If we set 
\[
U_{r,n} = \crit(f_{r,n}) \subset Y_{r,n},\quad
U_{r,\boldit{a}} = \crit(g_{r,\boldit{a}}) \subset Y_{r,\boldit{a}},
\]
the restriction $\Phi_{r,\boldit{a}} \colon U_{r,\boldit{a}} \into U_{r,n}$ is still equivariant with respect to the inclusion $\GL_{\boldit{a}} \subset \GL_n$, and so it induces a morphism of schemes
\begin{equation}
    \psi_{r,\boldit{a}} \colon U_{r,\boldit{a}} \sslash_{\boldit{\theta}} \GL_{\boldit{a}} \to U_{r,n} \sslash_{\theta} \GL_n,
\end{equation}
which fits in a diagram
\[
\begin{tikzcd}[row sep=large,column sep=large]
U_{r,\boldit{a}} \sslash_{\boldit{\theta}} \GL_{\boldit{a}} \arrow[hook]{r}{\textrm{open}}\arrow[swap]{d}{\psi_{r,\boldit{a}}}
& \prod_{i=1}^k U_{r,a_i} \sslash_{\theta_i} \GL_{a_i} \arrow{r}{\sim} & \prod_{i=1}^k \mathrm{Q}_{r,a_i}\arrow[dashed]{dl} \\
U_{r,n} \sslash_{\theta} \GL_n\arrow{r}{\sim} & \mathrm{Q}_{r,n} &
\end{tikzcd}
\]
Let $\mathrm Q_{r,\boldit{a}}^\circ \into \mathrm Q_{r,\boldit{a}} = \prod_{i=1}^k \mathrm{Q}_{r,a_i}$ be the open subscheme of $k$-tuples of quotients
\[
K_{\boldit{a}} = ([\OO^{\oplus r} \onto E_1],\ldots,[\OO^{\oplus r} \onto E_k]) \in \mathrm Q_{r,\boldit{a}}
\]
such that $\Supp(E_i) \cap \Supp(E_j) = \emptyset$ for all $i\neq j$. We can use the above diagram to identify the map $\psi_{r,\boldit{a}}$ with the `union of points' map (denoted the same way)
\[
\psi_{r,\boldit{a}} \colon \mathrm Q_{r,\boldit{a}}^\circ \to \mathrm Q_{r,n},
\]
which takes a point $K_{\boldit{a}} \in \mathrm{Q}_{r,\boldit{a}}^\circ$ as above to the joint surjection $[\OO^{\oplus r} \onto E_1 \oplus \cdots \oplus E_k] \in \mathrm Q_{r,n}$. This morphism is \'etale by \cite[Proposition A.3]{BR18}.

Let $K = [\OO^{\oplus r} \onto E] \in \mathrm{Q}_{r,n}$ be in the image of the map $\psi_{r,\boldit{a}}$, i.e.~assume we can write $K = \psi_{r,\boldit{a}}(K_{\boldit{a}})$. The local structure of the d-critical locus $(\crit(f_{r,n}), s_{r,n}^{\crit})$ around $\iota_{r,n}^{-1}(K)$ is then described as follows.

\begin{lemma} \label{etale d-critical chart s_r,n^crit}
If $\phi_{r,\boldit{a}}$ denotes the composition
\[
\begin{tikzcd}
\phi_{r,\boldit{a}} \colon U_{r,\boldit{a}}^{\boldit{\theta}\mathrm{-st}} \arrow{r} & U_{r,\boldit{a}} \sslash_{\boldit{\theta}}\GL_{\boldit{a}} \arrow{r}{\psi_{r,\boldit{a}}} & U_{r,n} \sslash_{\theta} \GL_n, 
\end{tikzcd}
\]
then we have an identity of d-critical structures
\[
\phi_{r,\boldit{a}}^* s_{r,n}^{\crit} = g_{r,\boldit{a}} + (\dd g_{r,\boldit{a}})^2 \in \HH^0(\CS_{U_{r,\boldit{a}}^{\boldit{\theta}\mathrm{-st}}}^0)^{\GL_{\boldit{a}}}.
\]
\end{lemma}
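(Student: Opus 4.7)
The plan is to mirror the argument of \Cref{prop:PB_of_sn} and bootstrap \Cref{etale d-critical chart s_n^crit} via the forgetful morphisms that strip away the framing vectors. First I would observe that $\phi_{r,\boldit{a}}$ is smooth: the first arrow in its definition is the principal $\GL_{\boldit{a}}$-bundle quotient (free, thanks to $\boldit{\theta}$-stability), and the second is the \'etale morphism $\psi_{r,\boldit{a}}$. Consequently, by \cite[Proposition~2.8]{Joyce1}, both $\phi_{r,\boldit{a}}^\ast s_{r,n}^{\crit}$ and $g_{r,\boldit{a}} + (\dd g_{r,\boldit{a}})^2$ are d-critical structures on $U_{r,\boldit{a}}^{\boldit{\theta}\mathrm{-st}}$, so it suffices to compare them.

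The heart of the argument will be a commutative square
\[
\begin{tikzcd}[row sep=large,column sep=large]
U_{r,\boldit{a}}^{\boldit{\theta}\mathrm{-st}} \arrow{r}{\phi_{r,\boldit{a}}} \arrow[swap]{d}{\widetilde p_{r,\boldit{a}}} & \crit(f_{r,n}) \arrow{d}{\widetilde q_{r,n}} \\
\CX_{\boldit{a}} \arrow[swap]{r}{\psi_{\boldit{a}}} & {[U_n/\GL_n]}
\end{tikzcd}
\]
in which $\widetilde q_{r,n}$ is the forgetful morphism of \Cref{prop:PB_of_sn}, $\psi_{\boldit{a}}$ is the block-sum morphism of \Cref{etale d-critical chart s_n^crit} restricted to the disjoint-support open substack $\CX_{\boldit{a}}$, and $\widetilde p_{r,\boldit{a}}$ is the analogous forgetful morphism for the product quiver, smooth by applying \Cref{smoothness_forgetting_framings} componentwise. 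Commutativity is immediate, since each of the four morphisms amounts to some combination of ``forget the framing'' and ``take the block sum''.

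With the square in place, \Cref{prop:PB_of_sn} gives $\widetilde q_{r,n}^\ast s_n^{\crit} = s_{r,n}^{\crit}$, so I can rewrite
\[
\phi_{r,\boldit{a}}^\ast s_{r,n}^{\crit} \,=\, \widetilde p_{r,\boldit{a}}^\ast \psi_{\boldit{a}}^\ast s_n^{\crit}.
\]
\Cref{etale d-critical chart s_n^crit} identifies $\psi_{\boldit{a}}^\ast s_n^{\crit}$ with $g_{\boldit{a}} + (\dd g_{\boldit{a}})^2$ on a Zariski neighbourhood of each closed point of $\CX_{\boldit{a}}$; covering $\CX_{\boldit{a}}$ by such neighbourhoods and using the sheaf property of $\CS^0$ globalises the equality. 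A final repetition of the $p_{r,n}^\ast f_n = f_{r,n}$-style argument from the proof of \Cref{prop:PB_of_sn}---applied to the identity $\widetilde p_{r,\boldit{a}}^\ast g_{\boldit{a}} = g_{r,\boldit{a}}$, which holds because the potential $A[B,C]$ does not involve the framing data---then produces $\widetilde p_{r,\boldit{a}}^\ast(g_{\boldit{a}} + (\dd g_{\boldit{a}})^2) = g_{r,\boldit{a}} + (\dd g_{r,\boldit{a}})^2$, and the lemma follows.

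The main technical hurdle I expect to face is the bookkeeping required to set up $\widetilde p_{r,\boldit{a}}$ at the level of the stable pre-quotient and to check its interaction with the openness condition parametrising disjoint-support tuples; once the map is built as a product of the individual forgetful morphisms $\mathrm Q_{r,a_i} \to \CM_{a_i}$ (\Cref{smoothness_forgetting_framings}), restricted to the open disjoint-support locus and pulled back to the critical loci, the remaining argument is formal manipulation of pullbacks of d-critical structures along smooth morphisms together with the local identifications supplied by \Cref{etale d-critical chart s_n^crit}.
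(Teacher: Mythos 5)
Your approach detours through the stack $\CM_n$ and the heavy slice-theoretic \Cref{etale d-critical chart s_n^crit}, whereas the paper's own proof is short and stays entirely at the level of GIT quotients: the commutative square with the closed immersion $\Phi_{r,\boldit{a}}\colon Y_{r,\boldit{a}}\sslash_{\boldit{\theta}}\GL_{\boldit{a}}\hookrightarrow Y_{r,n}\sslash_{\theta}\GL_n$ satisfying $\Phi_{r,\boldit{a}}^\ast f_{r,n}=g_{r,\boldit{a}}$, together with the \'etaleness of $\psi_{r,\boldit{a}}$ on the critical loci, already forces the equality via the defining exact sequence for $\CS^0$. This works precisely because in the framed case the $\GL_n$-action is free, so the GIT quotients are honest schemes and $\Phi_{r,\boldit{a}}$ is a closed embedding of smooth ambient varieties. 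Beyond being longer, your argument has two genuine gaps.

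First, the commutative square you propose is not well defined as written: for $r\geq 2$ the $\boldit{\theta}$-stable locus of $U_{r,\boldit{a}}$ contains tuples whose components have overlapping supports (for example $a_1=a_2=1$, $r=2$, both sheaves $\OO_p$ at the same point, framed by the two standard basis vectors of $\BC^2$ respectively — this is $\boldit{\theta}$-stable because the joint framing vectors span $\BC^2$). Forgetting the framing then lands in $[U_{\boldit{a}}/\GL_{\boldit{a}}]$ but \emph{outside} $\CX_{\boldit{a}}$, so $\widetilde p_{r,\boldit{a}}$ does not take values in $\CX_{\boldit{a}}$. If you instead take the target of $\widetilde p_{r,\boldit{a}}$ to be $[U_{\boldit{a}}/\GL_{\boldit{a}}]$, you lose the \'etaleness of $\psi_{\boldit{a}}$ (which is only established on $\CX_{\boldit{a}}$ via \Cref{Lemma:etaleness_psi}) and the whole pullback manipulation of d-critical structures along the bottom row breaks down.

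Second, the globalisation step "covering $\CX_{\boldit{a}}$ by such neighbourhoods" does not go through: \Cref{etale d-critical chart s_n^crit} only produces $\GL_{\boldit{a}}$-invariant opens around the very special closed orbits $v_E$ in which every block $(A_i,B_i,C_i)$ is a \emph{scalar} triple. But $\CX_{\boldit{a}}$ (and, a fortiori, $[U_{\boldit{a}}/\GL_{\boldit{a}}]$) has many other closed orbits — e.g.~a tuple in which one component $(A_1,B_1,C_1)$ is a block-scalar triple supported on two distinct points. A $\GL_{\boldit{a}}$-invariant open containing a point $v_E$ contains exactly those orbits whose closure meets $v_E$, and these do not include the orbits specialising to the other, finer, closed orbits. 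Hence the opens $[U/\GL_{\boldit{a}}]$ from \Cref{etale d-critical chart s_n^crit}, as $E$ ranges over polystable sheaves with the \emph{fixed} decomposition $\boldit{a}$, do not cover $\CX_{\boldit{a}}$. (Contrast with the proof of \Cref{thm:comparison_d-crit}, which is allowed to let the decomposition type $\boldit{a}'$ vary and therefore does obtain a cover of $\CM_n$.) The paper's direct argument sidesteps the issue entirely by never invoking the local slice charts at this stage.
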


\begin{proof}
The commutative diagram
\[
\begin{tikzcd}[row sep=large,column sep=large]
U_{r,\boldit{a}} \sslash_{\boldit{\theta}} \GL_{\boldit{a}} \arrow[hook]{r} \arrow[swap]{d}{\psi_{r,\boldit{a}}}
&  Y_{r,\boldit{a}} \sslash_{\boldit{\theta}} \GL_{\boldit{a}} \arrow[swap]{d}{\Phi_{r,\boldit{a}}} \arrow{dr}{g_{r,\boldit{a}}}\\
U_{r,n} \sslash_{\theta} \GL_n \arrow[hook]{r} & Y_{r,n} \sslash_{\theta} \GL_n \arrow[swap]{r}{f_{r,n}} & \BA^1
\end{tikzcd}
\]
and the fact that $\psi_{r,\boldit{a}}$ is \'{e}tale immediately imply the claim, using the defining properties of the sheaf $\CS_{U_{r,\boldit{a}}^{\boldit{\theta}\mathrm{-st}}}^0$.
\end{proof}

\subsection{The d-critical structure \texorpdfstring{$s_{r,n}^{\der}$}{} coming from derived symplectic geometry} \label{description of Dquot}

We start right away with the following definition.

\begin{definition}\label{def:derived_dcrit_quot}
The \emph{derived d-critical structure} on $\mathrm{Q}_{r,n}$ is defined as 
\[
s_{r,n}^{\der} = q_{r,n}^* s_n^{\der} \in \HH^0\left(\CS_{\mathrm Q_{r,n}}^0\right).
\]
\end{definition}

\begin{remark}
We observed in \Cref{main_thm_BODY} how \Cref{thm:comparison_d-crit}, combined with \Cref{prop:PB_of_sn}, proves the relation
\[
\iota_{r,n}^\ast s_{r,n}^{\der} = s_{r,n}^{\crit} \in \HH^0\left(\CS_{\crit(f_{r,n})}^0\right)
\]
that is the content of \Cref{main_thm}.
\end{remark}

In the rest of this subsection, we motivate and justify the above definition by explaining how the d-critical structure $s_{r,n}^{\der}$ arises naturally using the derived Quot scheme in Diagram~\eqref{diagram_main_players2}.

We proceed to describe the derived Quot scheme $\mathbf{Q}_{r,n}$.

Recall that as in the case of classical Quot schemes \cite{Nit}, the derived Quot scheme $\mathbf{Q}_{r,n}$ is defined as follows: Let $\BA^3 \subset \BP$ be any compactification of $\BA^3$, for example $\BP = \BP^3$. Then $\mathbf{Q}_{r,n}$ is the open derived subscheme of the derived Quot scheme $\mathbf{Q}_{\BP, r, n}$ \cite{DerivedQuot} parametrising quotients $[\OO_{\BP}^{\oplus r} \onto E]$ where $E$ is a $0$-dimensional length $n$ sheaf on $\BP$ whose support is contained in $\BA^3$.

By direct computation, we have that $\mathbf{Q}_{r,n}$ is a quasi-affine dg-scheme (or more generally derived scheme) $[\boldsymbol{\Spec}\ R / \GL_n]$ where $R$ is a sheaf of commutative dg-algebras generated in degrees $0,-1,-2$ respectively by
\begin{equation*}
    \OO_{Y_{r,n}^{\theta\mathrm{-st}}},\quad \Rep_n(L_3)^{\oplus 3} \simeq \gl_n^{\oplus 3},\quad \Rep_n(L_3) \simeq \gl_n
\end{equation*}
and the differentials are exactly the same as for the algebra $(Q_3)_n$. Notice that we are slightly abusing the notation $\boldsymbol{\Spec}\ R$ here.

It is now clear that there is an inclusion $(Q_3)_n \subset R$ of dg-algebras, where $(Q_3)_n$ is as in Equation~\eqref{description of derived M_n}. This induces precisely the forgetful map
\begin{equation*}
    \boldit{q}_{r,n} \colon \mathbf{Q}_{r,n} \lr \BCal{M}_n.
\end{equation*}
Since the map of dg-algebras is evidently smooth, it follows that $\boldit{q}_{r,n}$ is smooth.

Given these descriptions of $\mathbf{Q}_{r,n}$, $\BCal{M}_n$ and the simple ``product" structure of the map $\boldit{q}_{r,n}$, we see that while the derived Quot scheme is not $-1$-shifted symplectic, any Darboux chart of $\BCal{M}_n$ will give rise to a smooth chart of $\mathbf{Q}_{r,n}$ which is a smooth d-critical chart for the classical truncation $Q_{r,n}$. These d-critical charts induce precisely the d-critical structure $s_{r,n}^{\der}$ of Definition~\ref{def:derived_dcrit_quot}. The (closed, degenerate) $2$-form $\boldit{q}_{r,n}^\ast \omega_n$ is compatible with this d-critical structure in the appropriate sense.

\section{The case of a compact Calabi--Yau 3-fold}
\label{sec:compact_section}
Let $F$ be a locally free sheaf of rank $r$ on a smooth, projective Calabi--Yau $3$-fold $X$, where we have fixed a trivialisation $\zeta\colon \Lambda^3\Omega_X \simto \OO_X$. Form the Quot scheme
\[
\mathrm{Q}_{F,n}=\Quot_X(F,n).
\]
In this section we prove \Cref{thm:main_thm_B}, showing that the derived d-critical structure on $Q_{F,n}$, defined in \eqref{eqn:Pullback_to_quot} below, is locally modelled on the derived critical structure $s_{r,n}^{\der}$ of \Cref{def:derived_dcrit_quot}.

We will need the following algebraic result.

\begin{lemma}
\label{lemma:Ext_algebra}
Let $x \in X$ be a point on a smooth projective Calabi--Yau $3$-fold $X$, and let $0 \in \BA^3$ be the origin. There is an equivalence of dg-algebras 
\[
\Ext^\ast (\OO_x,\OO_x) \,\cong \,\Ext^\ast (\OO_0,\OO_0).
\]
Moreover the higher Massey products $m_n$ vanish for $n\geq 3$.
\end{lemma}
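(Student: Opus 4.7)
\begin{proofof}{the lemma}
The plan is to reduce everything to the affine situation already handled in \Cref{lemma:dg}, using the fact that a smooth point of a $3$-fold has a formal (or analytic, or \'etale) neighbourhood isomorphic to a neighbourhood of $0 \in \BA^3$. First I would observe that $\RHom(\OO_x, \OO_x)$ only depends on such an infinitesimal neighbourhood, since it is computed from a locally free resolution of $\OO_x$ whose support can be shrunk at will. Picking local coordinates at $x \in X$ produces an isomorphism between the stalks of the structure sheaves, and hence between the Koszul resolutions $Q^\bullet_x$ and $Q^\bullet_0$ of $\OO_x$ and $\OO_0$ (cf.~\eqref{eqn:Qp_complex}); this yields a quasi-isomorphism of dg-algebras
\[
\fg_x \defeq \Hom(Q^\bullet_x, Q^\bullet_x) \,\simeq\, \Hom(Q^\bullet_0, Q^\bullet_0) \,=\, \fg_0,
\]
which on cohomology gives the desired identification of graded algebras $\Ext^\ast(\OO_x, \OO_x) \cong \Ext^\ast(\OO_0, \OO_0)$, both being isomorphic to the exterior algebra $\Lambda^\bullet \BC^3$.

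The vanishing of the higher Massey products then follows immediately from \Cref{lemma:dg}: that lemma exhibits an explicit strict dg-algebra morphism $I_0 \colon (\fg_0^{\min}, m_2, 0) \to (\fg_0, \cdot, \delta)$ realising the identity on cohomology, and transporting $I_0$ across the local isomorphism above produces a corresponding morphism $I_x$ for $\OO_x$. The existence of a strict dg-algebra model for the minimal model is precisely $A_\infty$-formality, i.e.~$m_n = 0$ for $n \geq 3$.

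For the cyclic structure, I would use Serre duality together with the chosen trivialisation $\zeta \colon \Lambda^3 \Omega_X \simto \OO_X$: these equip $\Ext^\ast(\OO_x, \OO_x)$ with a non-degenerate symmetric pairing of degree $-3$, giving the Calabi--Yau (cyclic) structure. The analogous cyclic structure on $\Ext^\ast(\OO_0, \OO_0)$ is the standard one on $\Lambda^\bullet \BC^3$, coming from the tautological trivialisation $\dd x \wedge \dd y \wedge \dd z$ of $\Lambda^3 \Omega_{\BA^3}$. The local chart at $x$ can be chosen so that $\zeta$ pulls back to a trivialisation of $\Lambda^3 \Omega_{\BA^3}$ at $0$, and since any two such trivialisations differ by a non-vanishing function whose value at $0$ is a non-zero scalar, a further linear rescaling of one of the coordinates makes the two cyclic pairings agree on the nose; this rescaling preserves all the dg-algebra data already matched.

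The step I expect to require the most care is this last one, namely verifying that the local identification of Koszul resolutions and the normalisation of the trivialisation can be carried out compatibly, so that the equivalence of $A_\infty$-algebras obtained from formality is genuinely an equivalence of \emph{cyclic} structures rather than only of the underlying algebras; once this compatibility is in place, the previous two steps assemble into the claimed equivalence.
\end{proofof}
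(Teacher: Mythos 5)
Your proposal is correct and follows essentially the same route as the paper: reduce the $A_\infty$-structure to the local model via an analytic (or formal) identification of neighbourhoods, invoke \Cref{lemma:dg} for formality and the vanishing of $m_n$ for $n\geq 3$, and then compare the Serre duality pairings to identify the cyclic structures. The only deviation is that the paper is content to observe, via the commutative diagram comparing the two trace pairings, that the cyclic structures agree up to a nonzero scalar, whereas you push further and rescale a coordinate to match the trivialisations exactly; both conclusions suffice for the lemma as stated, so this is a cosmetic rather than substantive difference.
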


\begin{proof}
Let $X$ be \emph{any} smooth $3$-fold, $x \in X$ a point. The dg-algebra $\Ext^\ast (\OO_x, \OO_x)$ can be computed in either the algebraic or analytic category, the result and its $A_\infty$-structure being the same, capturing the deformation theory of a point inside a smooth $3$-fold. The first statement then follows after identifying suitable analytic neighbourhoods of $x \in X$ and $0 \in \BA^3$.
The vanishing of the Massey products $m_n\colon \Ext^1(\OO_x,\OO_x)^{\otimes n} \to \Ext^2(\OO_x,\OO_x)$ for $n\geq 3$ is a consequence of \Cref{lemma:dg}. 
\end{proof}

\begin{remark}
    By the results of \cite{Okke}, it is actually the case that there is an equivalence $\Ext^\ast (\OO_x,\OO_x) \,\cong \,\Ext^\ast (\OO_0,\OO_0)$ as \textit{cyclic} dg-algebras.
\end{remark}

We have a commutative diagram
\[
\begin{tikzcd}
\mathrm{Q}_{F,n} \arrow[swap]{dr}{h}\arrow{r}{q} & \CM_X(n) \arrow{d}{p} \\
& \Sym^n X
\end{tikzcd}
\]
where $\CM_X(n)$ is the moduli stack of $0$-dimensional sheaves of length $n$ over $X$, the morphism $p$ is the map to the coarse moduli space (the $n$-th symmetric product $\Sym^nX = X^n/\mathfrak S_n$) and $q = q_{F,n}$ is the (smooth) forgetful morphism sending a surjection $[F \onto E]$ to the point $[E]$. The composition $h = p\circ q$ agrees with the Quot-to-Chow map \cite[Section 6]{Grothendieck_Quot}. The trivialisation $\zeta\colon \Lambda^3\Omega_X \simto \OO_X$ induces a canonical $-1$-shifted symplectic structure $\omega_{X,n}$ on $\BCal{M}_X(n)$, whose truncation $s_{X,n} = \tau(\omega_{X,n}) \in \HH^0(\CS^0_{\CM_X(n)})$ induces a d-critical structure
\begin{equation}
    \label{eqn:Pullback_to_quot}
    s_{F,n} = q^\ast s_{X,n}
\end{equation}
on the Quot scheme $\mathrm Q_{F,n}$. 

Fix a $0$-cycle $[E] \in \Sym^nX$ represented, as ever, by a polystable sheaf
\begin{equation}
\label{polystable_sheaf}
E=\bigoplus_{i=1}^k \BC^{a_i}\otimes \OO_{x_i}, \quad x_i \in X, \quad x_i \neq x_i \,\,\textrm{for}\,\,i\neq j.
\end{equation}
In this particular case, the \emph{Ext quiver} $Q_{E_\bullet}$ associated to $E$ (see Toda's paper \cite[Section 3.3]{Toda:Moduli_Ext} for a more general definition) is the quiver with vertex set $V(Q_{E_\bullet})=\set{1,2,\ldots,k}$ and edge set
\[
E(Q_{E_\bullet}) = \coprod_{1\leq i,j\leq k} E_{i,j},
\]
where $E_{i,j} \subset \Ext^1(\OO_{x_i},\OO_{x_j})^\vee$ is a $\BC$-linear basis. It follows that $E_{i,j} = \emptyset$ for $i\neq j$, and 
\begin{equation}
    \label{basis_ext1}
E_{i,i} = \set{e_{i,1},e_{i,2},e_{i,3}}\subset \Ext^1(\OO_{x_i},\OO_{x_i})^\vee
\end{equation} 
contains $3$ elements. The source and target maps $s$ and $t$ from $E(Q_{E_\bullet})$ to the vertex set $V(Q_{E_\bullet})$ both send $E_{i,i}$ to the vertex $i$. In other words, the quiver $Q_{E_\bullet}$ is a disjoint union of $k$ copies of the $3$-loop quiver (see \Cref{fig:ext_quiver}).

\begin{figure}[ht]
\centering
\begin{tikzpicture}[>=stealth,->,shorten >=2pt,looseness=.5,auto]
  \matrix [matrix of math nodes,
           column sep={3cm,between origins},
           row sep={3cm,between origins},
           nodes={circle, draw, minimum size=5.5mm}]
{ 
|(B)| 1 \\         
};
\tikzstyle{every node}=[font=\small\itshape]
\path[->] (B) edge [loop above] node {$e_{1,1}$} ()
              edge [loop right] node {$e_{1,2}$} ()
              edge [loop below] node {$e_{1,3}$} ();
\end{tikzpicture}
\qquad
\begin{tikzpicture}[>=stealth,->,shorten >=2pt,looseness=.5,auto]
  \matrix [matrix of math nodes,
           column sep={3cm,between origins},
           row sep={3cm,between origins},
           nodes={circle, draw, minimum size=5.5mm}]
{ 
|(B)| 2 \\         
};
\tikzstyle{every node}=[font=\small\itshape]
\path[->] (B) edge [loop above] node {$e_{2,1}$} ()
              edge [loop right] node {$e_{2,2}$} ()
              edge [loop below] node {$e_{2,3}$} ();
\end{tikzpicture}
\qquad 
\begin{tikzpicture}[>=stealth,->,shorten >=2pt,looseness=.5,auto]
  \matrix [matrix of math nodes,
           column sep={3cm,between origins},
           row sep={3cm,between origins},
           nodes={circle, draw, minimum size=5.5mm}]
{ 
|(B)| k \\         
};
\node at (-1.5,0) {$\cdots$};
\tikzstyle{every node}=[font=\small\itshape]
\path[->] (B) edge [loop above] node {$e_{k,1}$} ()
              edge [loop right] node {$e_{k,2}$} ()
              edge [loop below] node {$e_{k,3}$} ();
\end{tikzpicture}
\caption{The Ext quiver of a $0$-dimensional polystable sheaf \eqref{polystable_sheaf}.}\label{fig:ext_quiver}
\end{figure}
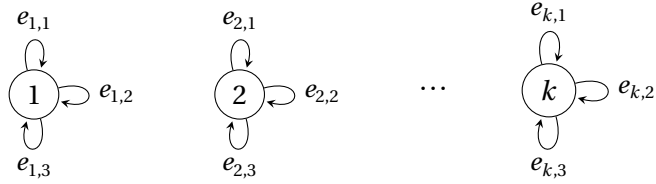

There is an associated convergent superpotential (see \cite[Sections 2.2, 2.6]{Toda:Moduli_Ext} for more details)
\[
W_{E_\bullet} \in \BC\set{Q_{E_\bullet}} \subset \BC \llbracket Q_{E_\bullet}\rrbracket,
\]
whose general definition reads as follows.
First, for each $\OO_{x_i} \in \Coh(X)$, consider the Massey products on the dg-algebra $\Ext^\ast(\OO_{x_i},\OO_{x_i})$, defined by the maps
\[
m_n \colon \Ext^1(\OO_{x_i},\OO_{x_i})^{\otimes n} \to \Ext^2(\OO_{x_i},\OO_{x_i}).
\] 
Denote by 
\[
(-,-)_{x_i}\colon \Ext^2(\OO_{x_i},\OO_{x_i}) \times \Ext^1(\OO_{x_i},\OO_{x_i}) \to \Ext^3(\OO_{x_i},\OO_{x_i}) \xrightarrow{\tr} \BC
\]
the Serre duality pairing. Then, by the Calabi--Yau condition, for any given elements $a_1,\ldots,a_n \in \Ext^1(\OO_{x_i},\OO_{x_i})$, one has the cyclicity relation
\[
(m_{n-1}(a_1,\ldots,a_{n-1}),a_n)_{x_i} = (m_{n-1}(a_2,\ldots,a_{n}),a_1)_{x_i}.
\]
Let $E_{i,i}^\vee = \set{e_{i,1}^\vee,e_{i,2}^\vee,e_{i,3}^\vee} \subset \Ext^1(\OO_{x_i},\OO_{x_i})$ be the dual basis of \eqref{basis_ext1}. Then, Toda defines in \cite[Section 5.5]{Toda:Moduli_Ext} the superpotential
\[
W_{E_\bullet} = \sum_{n\geq 3}\sum_\psi \sum_{e_i \in E_{\psi(i),\psi(i+1)}}a_{\psi,e_{\bullet}}\cdot e_1\cdots e_n,
\]
where $\psi$ runs over the set of maps $\set{1,2,\ldots,n+1} \to \set{1,\ldots,k}$ such that $\psi(1)=\psi(n+1)$, and the coefficients $a_{\psi,e_{\bullet}} \in \BC$ are defined by
\begin{equation}
    \label{psi_coefficients}
a_{\psi,e_{\bullet}}=\frac{1}{n}(m_{n-1}(e_1^\vee,\ldots,e_{n-1}^\vee),e_{n}^\vee).
\end{equation}

We now determine the (trace of the) superpotential $W_{E_\bullet}$ explicitly.

\begin{lemma}\label{lemma:trace_ext_potential}
Given a polystable sheaf $E$ as in \eqref{polystable_sheaf}, one has, up to a scalar,
\[
\Tr W_{E_\bullet} = \sum_{i=1}^k \Tr A_i[B_i,C_i],
\]
where we have set $A_i = e_{i,1}$, $B_i = e_{i,2}$ and $C_i = e_{i,3}$. In particular, $\Tr W_{E_\bullet}$ defines a regular (everywhere convergent) function on $\Rep_{\boldit{a}}(Q_{E_\bullet})$, where $\boldit{a} = (a_1,\ldots,a_k)$ is determined by \eqref{polystable_sheaf}. 
\end{lemma}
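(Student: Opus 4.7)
The plan is to exploit two structural features of the Ext quiver of a polystable sheaf~\eqref{polystable_sheaf}: first, the vanishing of higher Massey products on $\Ext^\ast(\OO_{x_i},\OO_{x_i})$ granted by \Cref{lemma:Ext_algebra}; second, the fact that the Ext quiver $Q_{E_\bullet}$ is a disjoint union of $k$ copies of the $3$-loop quiver because $\Ext^1(\OO_{x_i},\OO_{x_j})=0$ for $i\neq j$. These two observations collapse the infinite sum defining $W_{E_\bullet}$ to a finite, purely combinatorial expression that we can then recognise as $\Tr A_i[B_i,C_i]$.

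First I would observe that in the coefficient $a_{\psi,e_\bullet}$ of~\eqref{psi_coefficients}, the Massey product $m_{n-1}$ appears; by \Cref{lemma:Ext_algebra}, $m_{n-1}=0$ for $n-1\geq 3$, so only cubic monomials ($n=3$) survive. Next, since $\psi\colon\{1,2,3,4\}\to\{1,\ldots,k\}$ with $\psi(1)=\psi(4)$ must pick edges $e_\ell\in E_{\psi(\ell),\psi(\ell+1)}$, and $E_{i,j}=\emptyset$ for $i\neq j$, the map $\psi$ is forced to be constant equal to some $i\in\{1,\ldots,k\}$. Therefore $W_{E_\bullet}$ splits as a sum $\sum_{i=1}^k W_i$, with each $W_i$ depending only on the loops at vertex $i$.

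For each $i$, the Yoneda product $m_2$ identifies $\Ext^\ast(\OO_{x_i},\OO_{x_i})$ with the exterior algebra $\Lambda^\bullet\BC^3$ on the dual basis $\{e_{i,1}^\vee,e_{i,2}^\vee,e_{i,3}^\vee\}$ of $\Ext^1$. Under this identification, the Serre pairing $(m_2(e_{i,a}^\vee,e_{i,b}^\vee),e_{i,c}^\vee)_{x_i}$ evaluates to the totally antisymmetric tensor $\epsilon_{abc}$ up to one common nonzero scalar (independent of $i$ by \Cref{lemma:Ext_algebra}). Consequently
\[
W_i \,=\, \tfrac{1}{3}\sum_{a,b,c\in\{1,2,3\}} \epsilon_{abc}\, e_{i,a}\, e_{i,b}\, e_{i,c}
\]
up to this common scalar. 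Writing $M_{i,1}=A_i$, $M_{i,2}=B_i$, $M_{i,3}=C_i$ for the matrix variables attached to the three loops at vertex $i$ in $\Rep_{\boldit{a}}(Q_{E_\bullet})$, the trace becomes
\[
\Tr W_i \,=\, \tfrac{1}{3}\sum_{a,b,c}\epsilon_{abc}\,\Tr(M_{i,a}M_{i,b}M_{i,c}).
\]
Cyclicity of trace collapses this sum to $\Tr(A_iB_iC_i)-\Tr(A_iC_iB_i)=\Tr A_i[B_i,C_i]$, yielding the claimed formula. Everywhere convergence is then automatic since the series terminated at degree~$3$, so $W_{E_\bullet}$ is a polynomial.

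The only delicate step is the identification of the coefficients with $\epsilon_{abc}$: this amounts to checking that under $\Ext^\ast(\OO_{x_i},\OO_{x_i})\cong\Lambda^\bullet\BC^3$ the Serre trace on $\Ext^3$ is a nonzero multiple of the top wedge, a fact that follows from the Calabi--Yau trivialisation $\zeta$ via the comparison in the proof of \Cref{lemma:Ext_algebra}. The scalar so produced is the same for every vertex $i$ (again by \Cref{lemma:Ext_algebra}), justifying the qualifier ``up to a scalar'' in the statement.
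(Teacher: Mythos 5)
Your proof is correct and follows essentially the same route as the paper's. Both reduce to cubic terms via vanishing of higher Massey products, restrict the sum to constant $\psi$ using $E_{i,j}=\emptyset$ for $i\neq j$, identify the cubic coefficients as a totally antisymmetric tensor via the cyclic structure together with graded anticommutativity of the Yoneda product on $\Ext^1$, and then compute the trace; the paper just packages the antisymmetry claim as the explicit verification $j_i+l_i=0$ instead of asserting $\epsilon_{abc}$ directly.
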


\begin{proof}
By \Cref{lemma:Ext_algebra}, $m_n=0$ for $n\geq 3$, so by \eqref{psi_coefficients} only $n=3$ contributes to the sum. This already proves the last statement, about convergence. Since the only nonvanishing Ext groups $\Ext^1(\OO_{x_i},\OO_{x_j})$ are those where $i=j$, every $\psi$ in the sum satisfies $\psi(i)=\psi(i+1)$, hence the sum over $\psi$ is actually a sum over integers from $1$ up to $k$. Now, we have relations
\begin{align*}
  j_i &= (m_2(A_i^\vee,B_i^\vee),C_i^\vee)_{x_i}=(m_2(B_i^\vee,C_i^\vee),A_i^\vee)_{x_i}=(m_2(C_i^\vee,A_i^\vee),B_i^\vee)_{x_i} \\
  l_i &= (m_2(A_i^\vee ,C_i^\vee ),B_i^\vee )_{x_i} = (m_2(C_i^\vee ,B_i^\vee),A_i^\vee)_{x_i} = (m_2(B_i^\vee ,A_i^\vee ),C_i^\vee )_{x_i}.
\end{align*}
Thus
\[
W_{E_\bullet} = \sum_{i=1}^k \frac{j_i}{3}(A_iB_iC_i+B_iC_iA_i+C_iA_iB_i)+\frac{l_i}{3}(A_iC_iB_i+B_iA_iC_i+C_iB_iA_i).
\]
But since $m_2$ agrees with the Yoneda pairing, we have
\[
j_i + l_i = (m_2(A_i^\vee,B_i^\vee),C_i^\vee)_{x_i} + (m_2(B_i^\vee ,A_i^\vee),C_i^\vee)_{x_i} = (m_2(A_i^\vee,B_i^\vee)+m_2(B_i^\vee ,A_i^\vee),C_i^\vee)_{x_i} = 0.
\]
Since $j_i$ and $l_i$ do not depend on $i$, we can set $j=j_i$ and $l=l_i$, so that $l=-j$, thus
\[
W_{E_\bullet} = \frac{j}{3}\sum_{i=1}^k A_i[B_i,C_i] + B_i[C_i,A_i] + C_i[A_i,B_i].
\]
It follows that $\Tr W_{E_\bullet} = j\cdot \sum_{1\leq i\leq k} \Tr A_i[B_i,C_i]$, as required.
\end{proof}

Let $\mathfrak{M}_{\boldit{a}}(Q_{E_\bullet})$ denote the moduli stack of $\boldit{a}$-dimensional representations of the Ext quiver $Q_{E_\bullet}$.
In the diagram
\[
\begin{tikzcd}
\BA^1 & &  \Rep_{\boldit{a}}(Q_{E_\bullet}) \arrow[ll,"\Tr W_{E_\bullet}"']\arrow{r}\arrow[swap]{dr}{\pi} & \mathfrak{M}_{\boldit{a}}(Q_{E_\bullet}) \arrow{d}\arrow[equal]{r} & \prod_{1\leq i\leq k} \mathfrak{M}_{a_i}(L_3) \\
& & & M_{\boldit{a}}(Q_{E_\bullet})\arrow[equal]{r} & \prod_{1\leq i\leq k} \Sym^{a_i}\BA^3.
\end{tikzcd}
\]
we thus have a canonical identification $(\Rep_{\boldit{a}}(Q_{E_\bullet}),\Tr W_{E_\bullet}) = (Y_{\boldit{a}},g_{\boldit{a}})$, where $g_{\boldit{a}}$ was defined in \Cref{def of g_a} and it equals $\Tr W_{E_\bullet}$ by \Cref{lemma:trace_ext_potential}.

By \cite[Theorem 5.3]{Toda:2017aa}, we can find open \emph{analytic} neighbourhoods
\[
0 \in V \subset M_{\boldit{a}}(Q_{E_\bullet}),\qquad [E] \in T \subset \Sym^nX
\]
and an analytic isomorphism $i$ fitting in a diagram
\[
\begin{tikzcd}[column sep=large]
\CZ_{\boldit{a}}={\bigl[\crit(g_{\boldit{a}}|_{V})\,\big/\GL_{\boldit{a}}\bigr]} \arrow[r,"i","\sim"'] & p^{-1}(T) \arrow[hook]{r}{\textrm{open}} & \CM_X(n),
\end{tikzcd}
\]

\begin{lemma}
    Up to potential shrinking of $\pi^{-1}(V)$ around $\pi^{-1}(0)$, we have
    \begin{equation}\label{eqn:s_x,n restricted}
s_{X,n}\big|_{\CZ_{\boldit{a}}}= g_{\boldit{a}}|_{V} + (\dd g_{\boldit{a}}|_{V})^2 \,\in\,\HH^0(\CS^0_{\CZ_{\boldit{a}}}),
\end{equation}
where we are abusing notation and writing $g_{\boldit{a}}|_V$ for the restriction of $g_{\boldit{a}}$ to $\pi^{-1}(V) \subset \Rep_{\boldit{a}}(Q_{E_\bullet})$.
\end{lemma}

\begin{proof}
    The arguments in the proof of Theorem~\ref{thm:comparison_d-crit_1} and Proposition~\ref{prop: 3.12} go through verbatim, as long as we can verify that the statement of Lemma~\ref{formal etale d-critical chart s_n^der} applies to $\CM_X(n)$. But this is indeed the case, since Lemma~\ref{formal etale d-critical chart s_n^der} and its proof only depend on the minimal model of the dg-algebra $\fg_E$, where $E$ was defined in~\eqref{polystable_sheaf}. But, by Lemma~\ref{lemma:Ext_algebra}, we may assume that the points $x_i \in X$ are distinct points of $\BA^3$, so we get a formal d-critical chart of the required form at $[E]$.
\end{proof}

Now form the open subscheme
\[
\mathrm{Q}_{F,\boldit{a}} = \CZ_{\boldit{a}}\times_{\CM_{X}(n)}\mathrm{Q}_{F,n} \into \mathrm{Q}_{F,n},
\]
and consider the cartesian diagram
\begin{equation}\label{big_diagram}
\begin{tikzcd}[row sep=large,column sep=large]
& \mathrm{Q}_{F,\boldit{a}}\MySymb{dr}\arrow{d}{q}\arrow[hook]{r} & \mathrm{Q}_{F,n}\arrow{d}{q} \\
\CZ'_{\boldit{a}}\MySymb{dr}\arrow[swap]{d}{\textrm{\'{e}tale}}\arrow{r} & \CZ_{\boldit{a}}\arrow{d}{\psi_{\boldit{a}}}\arrow[hook]{r} & \CM_{X}(n) \\
\mathrm{Q}_{r,n}\arrow[swap]{r}{q_{r,n}} & \CM_n & 
\end{tikzcd}
\end{equation}
defining the scheme $\CZ'_{\boldit{a}}$. Note that, possibly after shrinking $V$, thanks to \Cref{Lemma:etaleness_psi} and \Cref{etale d-critical chart s_n^crit} we may assume $\psi_{\boldit{a}}\colon \CZ_{\boldit{a}} \to \CM_n$ is \'etale and satisfies
\begin{equation}
\label{eqn:g_a}
    \psi_{\boldit{a}}^\ast s_n^{\der} = g_{\boldit{a}}|_{V} + (\dd g_{\boldit{a}}|_{V})^2.
\end{equation}
Next, we show that the morphisms
\begin{equation}\label{smooth_maps}
\mathrm{Q}_{F,\boldit{a}} \to \CZ_{\boldit{a}}, \quad \CZ_{\boldit{a}}' \to \CZ_{\boldit{a}}
\end{equation}
look the same \'etale locally. We work locally analytically around a point $[E] \in \CZ_{\boldit{a}} \subset \CM_X(n)$. We let $[E'] \in \CM_n$ be the image of $[E]$ under the \'etale map $\psi_{\boldit{a}}$. Let 
\[
B \subset \BC^{rn}
\]
be the analytic open subset corresponding to surjective maps in
\[
\Hom_X(F,E) = \Hom_{\BA^3}(\OO^{\oplus r},E')=\BC^{rn}.
\]
We identify $B$ with the fibre of $q$ (resp.~$q_{r,n}$) over the point $[E]$ (resp.~$[E']$).
Since the morphisms \eqref{smooth_maps} are smooth, they are both analytically locally trivial (on the source), so any point in $q^{-1}([E]) \subset \mathrm{Q}_{F,\boldit{a}}$ admits an analytic open neighbourhood of the form $B \times W_{\boldit{a}}$, where $W_{\boldit{a}} \subset \CZ_{\boldit{a}}$ is a suitable analytic open neighbourhood of $[E]$. Repeating the same reasoning with the map $\CZ'_{\boldit{a}} \to \CZ_{\boldit{a}}$ and shrinking $W_{\boldit{a}}$ further if necesssary, we see that in Diagram \eqref{big_diagram} we can make the replacement
\[
\begin{tikzcd}
& \mathrm{Q}_{F,\boldit{a}} \arrow{d}{q} \\
\CZ'_{\boldit{a}} \arrow{r}{} & \CZ_{\boldit{a}} 
\end{tikzcd}
\quad \rightsquigarrow \quad
\begin{tikzcd}
& B \times W_{\boldit{a}} \arrow{d}{\pr_2} \\
B \times W_{\boldit{a}} \arrow{r}{\pr_2} & W_{\boldit{a}} 
\end{tikzcd}
\]
where $B \times W_{\boldit{a}}$ has an \'etale map (the same $\psi_{\boldit{a}}$ as above) down to $\mathrm{Q}_{r,n}$. Now we compute 
\begin{align*}
    s_{F,n} \big|_{B \times W_{\boldit{a}}} 
&=\pr_2^\ast \bigl(s_{X,n} \big|_{W_{\boldit{a}}}\bigr) & \\
&=\pr_2^\ast \bigl(g_{\boldit{a}}|_{V} + (\dd g_{\boldit{a}}|_{V})^2\bigr) & \textrm{by }\eqref{eqn:s_x,n restricted} \\
&=\pr_2^\ast \psi_{\boldit{a}}^\ast s_n^{\der} & \textrm{by } \eqref{eqn:g_a} \\
&=s_n^{\der} \big|_{B \times W_{\boldit{a}}}.
\end{align*}
This completes the proof of \Cref{thm:main_thm_B}.

\section{The special case of the Hilbert scheme}\label{sec:derived_hilb}

This section is devoted to the proof of \Cref{thm:symmetric_pot}.

Let $\mathrm H = \Hilb^n\BA^3$ be the Hilbert scheme parametrising $0$-dimensional subschemes of $\BA^3$ of length $n$. Denote by
\[
0 \to \mathfrak I_{\CZ} \to \OO_{\BA^3 \times \mathrm{H}} \to \OO_{\CZ} \to 0
\]
the universal short exact sequence living over $\BA^3 \times \mathrm{H}$, where $\CZ \subset \BA^3 \times \mathrm{H}$ denotes the universal subscheme. Let $\pi\colon \BA^3 \times \mathrm{H} \to \mathrm{H}$ be the projection; we use the notation $\RRlHom_\pi(-,-) = \RR \pi_\ast \RRlHom(-,-)$ throughout.

Consider the derived critical locus $\mathbf{Q} = \BRcrit(f)$ where $f = f_{1,n} \colon \NCQuot_1^n \to \BA^1$ is the potential in \Cref{critical_quot}. More precisely, $\mathbf{Q} = \dSpec B$ where $B$ is the sheaf of dg-algebras which is generated by $B^0 = \OO_{\NCQuot_1^n}$ in degree $0$ and $B^{-1} = T_{\NCQuot_1^n}$ in degree $-1$ with differential given by the dual of the section $\dd f \in \HH^0(\Omega_{\NCQuot_1^n})$. Recall that $\NCQuot_1^n = Y_{1,n}^{\theta-\mathrm{st}} / \GL_n$.

Then we have the following commutative diagram
\begin{equation}\label{diag:derived_hilb}
    \begin{tikzcd}[row sep = large, column sep = large]
\mathrm H \arrow{d}{q} \arrow[r,"\epsilon","\sim"']\arrow[bend left]{rr}[description]{\eta} &
\crit(f) \arrow[hook]{r}{j} 
& \mathbf{Q}\arrow{d}{\boldit{q}} \\
\CM_n\arrow[hook]{rr}{j_n} 
& 
& \BCal{M}_n
\end{tikzcd}
\end{equation}
where we have set $\epsilon = \iota_{1,n}^{-1}$, and $q = q_{1,n}$ is the forgetful map, whereas $j$ and $j_n$ are the inclusions of the classical spaces into their derived enhancements.

The morphism $\boldit{q}$ is obtained as follows: Recall that by \eqref{description of derived M_n} $\BCal{M}_n$ is the quotient stack $[\dSpec (Q_3)_n / \GL_n]$. For brevity, let us write $A = (Q_3)_n$ and $A^0 = (Q_3)_n^0 = \OO_{Y_n},\ A^{-1} = (Q_3)_n^{-1} = T_{Y_n}$ be the degree $0$ and degree $-1$ summands of the dg-algebra $(Q_3)_n$ respectively. There are natural morphisms $A^0 \to B^0$ and $A^{-1} \to B^{-1}$ which are $\GL_n$-invariant and, together with the trivial map $A^{-2} \to 0$, induce a morphism $\dSpec B \to \dSpec A \to [ \dSpec A / \GL_n ]$, which is the morphism $\boldit{q}$.

Note that, by definition,
\[
\begin{tikzcd}
\BE_{\crit} = j^\ast \BL_{\mathbf{Q}} \arrow{r}{\varphi_{\crit}} & \BL_{\crit(f)}
\end{tikzcd}
\]
is the critical obstruction theory on $\crit(f)$, denoted $\BE_{f}$ in the introduction, see \eqref{eqn:crit_pot_1231}.
The maps $\eta$ and $\boldit{q}$ induce a commutative diagram
\[
\begin{tikzcd}
& & \eta^\ast \boldit{q}^\ast \BL_{\BCal{M}_n} \arrow[swap]{dl}{\psi}\arrow{dd}  \\
\epsilon^\ast \BE_{\crit}\arrow[equal]{r} & \eta^\ast \BL_{\mathbf{Q}}\arrow[swap]{dr}{\epsilon^\ast \varphi_{\crit}} & \\
& & \BL_{\mathrm{H}} 
\end{tikzcd}
\]
which after applying the truncation functor $\tau_{[-1,0]}$ becomes
\begin{equation}\label{triangle}
\begin{tikzcd}
& \eta^\ast \tau_{[-1,0]} \boldit{q}^\ast \BL_{\BCal{M}_n} \arrow[swap]{dl}{\overline{\psi}}\arrow{dd}{\varphi} \\
\epsilon^\ast \BE_{\crit}\arrow[swap]{dr}{\epsilon^\ast \varphi_{\crit}} & \\
& \BL_{\mathrm{H}}
\end{tikzcd}
\end{equation}
where $\BE_{\crit}$ is unchanged since it already has cohomological amplitude in degrees $[-1,0]$. The morphism $\overline{\psi}$ is an isomorphism because it is the pullback along $\eta$ of the second of the isomorphisms \eqref{iso_tangent} established in the following lemma.

\begin{lemma}
The derivative of $\boldit{q}$ induces isomorphisms
\begin{equation}\label{iso_tangent}
\begin{tikzcd}
\BT_{\mathbf{Q}} \arrow{r}{\sim} & \tau_{[0,1]} \boldit{q}^\ast \BT_{\BCal{M}_n}\\
\tau_{[-1,0]} \boldit{q}^\ast \BL_{\BCal{M}_n} \arrow{r}{\sim} & \BL_{\mathbf{Q}}.
\end{tikzcd}
\end{equation}
Moreover, one has an isomorphism
\begin{equation}\label{truncated_tangent}
\begin{tikzcd}
\gamma\colon \tau_{[0,1]}\RRlHom_\pi(\OO_{\CZ}[-1],\OO_{\CZ}) \arrow{r}{\sim} &  \tau_{[0,1]} \eta^\ast \boldit{q}^\ast \BT_{\BCal{M}_n}.
\end{tikzcd}
\end{equation}
\end{lemma}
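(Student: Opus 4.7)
The plan is to analyze the derivative of $\boldit{q}$ at classical points of $\mathbf{Q}$ and verify that, on the relevant cohomological range, the induced maps are isomorphisms; the key geometric input will be the cyclicity of the surjection $\OO \onto E$.

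First, at a classical point $[\OO \onto E] \in \crit(f) \subset \mathbf{Q}$, with $I = \ker(\OO \to E)$, I will identify both sides pointwise. The tangent $\BT_{\mathbf{Q}}$ equals $[T_V \xrightarrow{\Hess f} \Omega_V]\big|_{\crit f}$, with cohomology $\Hom(I, E)$ in degree $0$ and $\Ext^1(I, E)$ in degree $1$ (the tangent and obstruction spaces of $\Hilb^n\BA^3$). The pullback $\boldit{q}^\ast \BT_{\BCal{M}_n}$ at $[E]$ is canonically $\RHom(E, E)[1]$, with cohomology $\Hom(E, E), \Ext^1(E, E), \Ext^2(E, E)$ in degrees $-1, 0, 1$ (any further cohomology plays no role after the truncation). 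The derivative of $\boldit{q}$ is identified with the connecting morphism $\RHom(I, E) \to \RHom(E, E)[1]$ obtained by applying $\RHom(-, E)$ to the exact triangle $I \to \OO \to E$; its cofiber is $R\Gamma(E)[1] = \BC^n[1]$, concentrated in cohomological degree $-1$.

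The induced long exact sequence then reads
\[
0 \to \Hom(E, E) \to \BC^n \to \Hom(I, E) \to \Ext^1(E, E) \to 0
\]
together with $\Ext^1(I, E) \simto \Ext^2(E, E)$ in the next degree. The crucial input is the cyclicity of $E = \OO_Z$: evaluation at $1$ gives a canonical isomorphism $\Hom(E, E) \simto H^0(E) = \BC^n$, forcing $\Hom(I, E) \simto \Ext^1(E, E)$. Hence the derivative $\BT_{\mathbf Q} \to \boldit{q}^\ast \BT_{\BCal M_n}$ is an isomorphism on $h^0$ and $h^1$. Since $\BT_{\mathbf Q}$ has amplitude $[0, 1]$, composition with the canonical truncation $\boldit{q}^\ast \BT_{\BCal M_n} \to \tau_{[0, 1]} \boldit{q}^\ast \BT_{\BCal M_n}$ yields the first iso of \eqref{iso_tangent}; the cotangent iso will follow by the dual argument applied to $\boldit{q}^\ast \BL_{\BCal M_n} \to \BL_{\mathbf Q}$.

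For the third iso, I will use the universal description $\BT_{\BCal{M}_n} \simeq \RRlHom_\pi(\mathscr{E}, \mathscr{E})[1]$, where $\mathscr{E}$ is the universal length-$n$ sheaf on $\BA^3 \times \BCal{M}_n$. From \eqref{diag:derived_hilb} one has $\boldit{q} \circ \eta = j_n \circ q$, and the modular interpretation of the forgetful map $q$ gives $q^\ast \mathscr{E}\big|_{\CM_n} \cong \OO_{\CZ}$. Base change then produces $\eta^\ast \boldit{q}^\ast \BT_{\BCal M_n} \simeq \RRlHom_\pi(\OO_{\CZ}, \OO_{\CZ})[1] = \RRlHom_\pi(\OO_{\CZ}[-1], \OO_{\CZ})$, and applying $\tau_{[0, 1]}$ yields $\gamma$. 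The hard part will be setting up the identification of the derivative of $\boldit q$ with the connecting map $\RHom(I, E) \to \RHom(E, E)[1]$ functorially in the universal family, so that the cyclicity identification $\Hom(E, E) \cong H^0(E)$ fiberwise can be promoted to a global isomorphism of complexes.
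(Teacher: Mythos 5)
Your approach is correct in spirit but takes a genuinely different route from the paper's, and in doing so opens a gap that the paper deliberately avoids.

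The paper's proof works entirely with the explicit quiver/matrix presentations: $\BT_{\mathbf{Q}}$ is written as the $\GL_n$-equivariant complex $[\gl_n \to T_{Y_{1,n}^{\theta\mathrm{-st}}} \xrightarrow{\Hess f} \Omega_{\NCQuot_1^n}]$ on $Y_{1,n}^{\theta\mathrm{-st}}$, $\boldit{q}^\ast \BT_{\BCal{M}_n}$ as $[\gl_n \to T_{Y_n} \to \Omega_{Y_n} \to \gl_n]$, and $\dd\boldit{q}$ as the evident map of complexes. The proof then checks that $\HH^0(\dd\boldit{q})$ and $\HH^1(\dd\boldit{q})$ are isomorphisms by showing that a certain map of vector bundles ($\gl_n \to \BC^n$, sending $X\mapsto Xv$) is fibrewise surjective because the framing vector $v$ is cyclic for $(A,B,C)$. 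This is a statement about vector bundles, so it globalizes immediately. Your route replaces this by the moduli-theoretic picture: you identify $\BT_{\mathbf{Q}}$ fibrewise with $\RHom(I,E)$, identify $\dd\boldit{q}$ with the connecting morphism from the triangle $I\to\OO\to E$, run the long exact sequence, and use $\Hom(E,E)\simto H^0(E)$ for cyclic $E$. The cyclicity input is the same fact in two guises, and the long-exact-sequence computation is correct.

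The gap is the one you yourself flag at the end: you have not established that the derivative of $\boldit{q}$, as a morphism out of the tangent complex of the \emph{derived critical locus} $\mathbf{Q}=\BRcrit(f)$, actually is the connecting map $\RHom(I,E)\to\RHom(E,E)[1]$, nor that the cohomology of the Hessian complex $[T_V\xrightarrow{\Hess f}\Omega_V]$ at a point is $\Hom(I,E)$ in degree $0$ and $\Ext^1(I,E)$ in degree $1$. These identifications require matching $\BRcrit(f)$ with the derived Quot scheme \emph{and} matching $\BT_{\BRcrit(f)}$ with the $\RHom$ complex of the universal ideal. But the comparison of the Hessian description with the $\RHom$ description, compatibly with the maps to $\BL_{\mathrm H}$, is precisely what Theorem~\ref{thm:symmetric_pot} is after (the lemma under discussion is one of its ingredients), so you would need to set this up with care to avoid circularity. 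The paper sidesteps all of this by never invoking the moduli interpretation in the lemma: the cyclicity argument is run directly on the explicit presentations, where it is a bare linear-algebra statement that promotes to a surjection of vector bundles. If you want to keep your route, you would need to first prove (independently of the POT comparison) that $\BT_{\mathbf{Q}}\cong\RRlHom_\pi(\CI_\CZ,\OO_\CZ)$ and that $\dd\boldit{q}$ corresponds to the connecting map under this iso; that is a nontrivial extra step, and you have correctly identified it as the hard part without supplying it.
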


\begin{proof}
The derived tangent complex of $\mathbf{Q}$ is given by
\begin{equation*}
\begin{tikzcd}[column sep=large]
    \BT_{\mathbf{Q}} = [ T_{\NCQuot_1^n} \arrow{r}{\Hess(f)} & \Omega_{\NCQuot_1^n} ]
\end{tikzcd}
\end{equation*}
where $\Hess(f)$ denotes the Hessian of $f$.

Since $\NCQuot_1^n = Y_{1,n}^{\theta-\mathrm{st}} / \GL_n$ and the action of $\GL_n$ on $Y_{1,n}^{\theta-\mathrm{st}}$ is free, we may write this as the quasi-isomorphic $\GL_n$-equivariant three-term complex on $Y_{1,n}^{\theta-\mathrm{st}}$ in degrees $-1$ to $1$ (where we are slightly abusing notation by omitting certain pullbacks)
\begin{equation*}
\begin{tikzcd}[column sep=large]
    \BT_{\mathbf{Q}} = [ \gl_n \arrow{r} & T_{Y_{1,n}^{\theta-\mathrm{st}}} \arrow{r}{\Hess(f)} & \Omega_{\NCQuot_1^n} ].
\end{tikzcd}
\end{equation*}
By the definition of $\boldit{q}$, we have (again slightly abusing notation)
\begin{equation*}
\begin{tikzcd}
    \boldit{q}^\ast \BT_{\BCal{M}_n} = [ \gl_n \arrow{r} & T_{Y_{n}} \arrow{r} & \Omega_{Y_{n}} \arrow{r} & \gl_n ]
\end{tikzcd}
\end{equation*}
and the derivative $\dd \boldit{q}$ is the morphism of complexes
\begin{equation*}
\begin{tikzcd}[column sep=large]
     \gl_n \arrow{r} \ar[equal]{d} & T_{Y_{1,n}^{\theta-\mathrm{st}}} \arrow{r}{\Hess(f)} \arrow{d} & \Omega_{\NCQuot_1^n} \arrow{d} \\
     \gl_n \arrow{r} & T_{Y_{n}} \arrow{r} & \Omega_{Y_{n}} \arrow{r} & \gl_n
\end{tikzcd}
\end{equation*}
We may identify $T_{Y_{1,n}^{\theta-\mathrm{st}}}$ with $(\gl_n^{\oplus 3} \oplus \BC^n) \otimes \OO_B$ and $T_{Y_n}$ with $\gl_n^{\oplus 3} \otimes \OO_B$ so that the middle vertical arrow is the natural projection map.

It is clear that $\HH^{0}(\dd \boldit{q})$ is surjective.

To show that it is injective, notice that the leftmost arrow $\gl_n \to T_{Y_{1,n}^{\theta-\mathrm{st}}}$ maps $X \in \gl_n$ to $$([X,A], [X,B], [X,C], Xv) \in \gl_n^{\oplus 3} \oplus \BC^n$$ at the point $(A,B,C,v) \in Y_{1,n}^{\theta-\mathrm{st}}$. Since, by stability, $v$ is a cyclic vector with respect to the action of the matrices $A,B,C$, for any $w \in \BC^n$ there exists a polynomial $f(A,B,C)$ such that $w = f(A,B,C)v$. Letting $X = f(A,B,C)$, the image of $X$ is then $(0,0,0,w) \in \gl_n^{\oplus 3} \oplus \BC^n$. Therefore, the composition $\gl_n \to T_{Y_{1,n}^{\theta-\mathrm{st}}} = \gl_n^{\oplus 3} \oplus \BC^n \to \BC^n$ is fibrewise surjective and hence is a surjective morphism of locally free sheaves and the injectivity of $\HH^0(\dd \boldit{q})$ follows readily.

A similar argument involving duals shows that $\HH^1(\dd \boldit{q})$ is an isomorphism as well.

Thus the truncation $\tau_{[0,1]} \dd \boldit{q}$ induces the first isomorphism in \eqref{iso_tangent}. The second isomorphism is obtained analogously.

\smallbreak
For \eqref{truncated_tangent}, since the universal complex of $\BCal{M}_n$ over the image of $\boldit{q}$ restricts to $\OO_{\CZ}$ on $\CM_n$, we have that $\RRlHom_\pi(\OO_{\CZ}[-1], \OO_{\CZ})$ is isomorphic to $q^* j_n^* \BT_{\BCal{M}_n} = \eta^* \boldit{q}^\ast \BT_{\BCal{M}_n}$, so applying the truncation $\tau_{[0,1]}$ gives the desired isomorphism.
\end{proof}

We note that by the above explicit description of $\BT_{\BCal{M}_n}$ and the morphism $\boldit{q}$ we have an isomorphism between the complexes $\eta^* \tau_{[0,1]} \boldit{q}^\ast \BT_{\BCal{M}_n}$ and $\tau_{[0,1]} \eta^* \boldit{q}^\ast \BT_{\BCal{M}_n}$, which we use to identify the two complexes from now on.

Our final goal is to produce an isomorphism
\[
\begin{tikzcd}
\rho\colon \eta^\ast\tau_{[-1,0]} \boldit{q}^\ast \BL_{\BCal{M}_n} 
\arrow{r}{\sim} & 
\BE_{\der} = \RRlHom_\pi(\mathfrak I_{\CZ},\mathfrak I_{\CZ})_0[2]
\end{tikzcd}
\]
such that 
\begin{equation}\label{comm_POT}
\begin{tikzcd}[column sep=large,row sep=large]
\eta^\ast\tau_{[-1,0]} \boldit{q}^\ast \BL_{\BCal{M}_n} 
\arrow{r}{\rho}\arrow[swap]{d}{\varphi} & 
\BE_{\der} \arrow{dl}{\varphi_{\der}} \\
\BL_{\mathrm{H}} & 
\end{tikzcd}
\end{equation}
commutes, where $\varphi_{\der}$ is obtained from the Atiyah class of $\mathfrak I_{\CZ}$ via a classical construction (see \cite{HT}, \cite{Quot19} or \cite{Equivariant_Atiyah_Class} for full details).

First of all, we have a diagram 
\[
\begin{tikzcd}[column sep=large,row sep=large]
  0 \arrow[dashed]{r}\arrow[dashed]{d} 
& \RR\pi_\ast \OO_{\BA^3 \times \mathrm{H}}[1]\arrow[equal]{r}\arrow{d}
& \RR\pi_\ast \OO_{\BA^3 \times \mathrm{H}}[1]\arrow{d} \\
  \RRlHom_\pi(\mathfrak I_{\CZ},\OO_{\CZ}) \arrow{r}\arrow[dashed,swap]{d}{\id} 
& \RRlHom_\pi(\mathfrak I_{\CZ},\mathfrak I_{\CZ})[1]\arrow{r}\arrow{d}
& \RRlHom_\pi(\mathfrak I_{\CZ},\OO_{\BA^3\times \mathrm{H}})[1]\arrow{d} \\
  \RRlHom_\pi(\mathfrak I_{\CZ},\OO_{\CZ}) \arrow{r}
& \RRlHom_\pi(\mathfrak I_{\CZ},\mathfrak I_{\CZ})_0[1]\arrow{r}
& \RRlHom_\pi(\OO_{\CZ},\mathfrak \OO_{\BA^3\times \mathrm{H}})[2]
\end{tikzcd}
\]
in the derived category of $\mathrm{H}$, where rows and columns are exact triangles; more precisely:
\begin{enumerate}
    \item the middle column is obtained by applying $\RR\pi_\ast$ to the shifted \emph{dual} of the exact triangle $\RRlHom(\mathfrak I_{\CZ},\mathfrak I_{\CZ})_0 \to \RRlHom(\mathfrak I_{\CZ},\mathfrak I_{\CZ}) \to \OO_{\BA^3\times \mathrm{H}}$, exploiting the fact that all three objects are self-dual;
    \item the right column is obtained by applying $\RRlHom_\pi(-,\OO_{\BA^3\times \mathrm{H}})$ to the exact triangle $\OO_{\CZ}[-2] \to \mathfrak I_{\CZ}[-1] \to \OO_{\BA^3\times \mathrm{H}}[-1]$,
    \item the middle row is obtained by applying $\RRlHom_\pi(\mathfrak I_{\CZ},-)$ to the exact triangle $\OO_{\CZ} \to \mathfrak I_{\CZ}[1] \to \OO_{\BA^3\times \mathrm{H}}[1]$.
\end{enumerate}
The last row
induces an isomorphism
\[
\begin{tikzcd}
\alpha\colon \tau_{[0,1]} \RRlHom_\pi(\mathfrak I_{\CZ},\OO_{\CZ}) \arrow{r}{\sim} & \RRlHom_\pi(\mathfrak I_{\CZ},\mathfrak I_{\CZ})_0[1].
\end{tikzcd}
\]
On the other hand, the exact triangle
\[
\begin{tikzcd}
\OO_{\BA^3 \times \mathrm{H}}[-1] \arrow{r} & \OO_{\CZ}[-1] \arrow{r} & \mathfrak I_{\CZ}
\end{tikzcd}
\]
induces, via $\RRlHom_\pi(-,\OO_{\CZ})$,  an exact triangle
\[
\begin{tikzcd}
\RRlHom_\pi(\mathfrak I_{\CZ},\OO_{\CZ}) \arrow{r} & \RRlHom_\pi(\OO_{\CZ},\OO_{\CZ})[1] \arrow{r} & \RRlHom_\pi(\OO_{\BA^3\times \mathrm{H}},\OO_{\CZ})[1]
\end{tikzcd}
\]
such that the truncation of the first arrow
\[
\begin{tikzcd}
\beta\colon \tau_{[0,1]} \RRlHom_\pi(\mathfrak I_{\CZ},\OO_{\CZ}) \arrow{r}{\sim} &  \tau_{[0,1]} \RRlHom_\pi(\OO_{\CZ},\OO_{\CZ})[1]
\end{tikzcd}
\]
is an isomorphism.
Summing up, as proved in \cite[Proposition 2.2]{Cao-Kool2}, we have an isomorphism 
\[
\begin{tikzcd}
\beta\circ\alpha^{-1}\colon \RRlHom_\pi(\mathfrak I_{\CZ},\mathfrak I_{\CZ})_0[1] \arrow{r}{\sim} & \tau_{[0,1]} \RRlHom_\pi(\OO_{\CZ}[-1],\OO_{\CZ})
\end{tikzcd}
\]
along with the identifications
\begin{align*}
\eta^\ast\tau_{[-1,0]} \boldit{q}^\ast \BL_{\BCal{M}_n} 
    \, &= \, \left( \tau_{[0,1]} \eta^\ast \boldit{q}^\ast \BT_{\BCal{M}_n}\right)^\vee \\
    \, &= \, \left(\tau_{[0,1]}\RRlHom_\pi(\OO_{\CZ}[-1],\OO_{\CZ})\right)^\vee & \textrm{via }\gamma^\vee\textrm{ from }\eqref{truncated_tangent} \\
    \, &= \, \left(\RRlHom_\pi(\mathfrak I_{\CZ},\mathfrak I_{\CZ})_0[1]\right)^\vee & 
    \textrm{via }(\beta\circ\alpha^{-1})^\vee \\
    \, &= \, \BE_{\der}  & 
    \textrm{by Serre duality.}
 \end{align*}   
 We have thus obtained 
 \[
 \begin{tikzcd}
 \rho = (\beta\circ\alpha^{-1})^\vee \circ \gamma^\vee \colon \eta^\ast\tau_{[-1,0]} \boldit{q}^\ast \BL_{\BCal{M}_n} \arrow{r}{\sim} & \BE_{\der}
 \end{tikzcd}
 \]
Thus all that remains to check is the commutativity of Diagram \eqref{comm_POT}; that is, we need to check that ``up to $\rho$'' the vertical map 
\[
\begin{tikzcd}
\varphi\colon \eta^\ast\tau_{[-1,0]} \boldit{q}^\ast \BL_{\BCal{M}_n}\arrow{r} & \BL_{\mathrm{H}}
\end{tikzcd}
\]
in Diagram \eqref{triangle} agrees with the symmetric obstruction theory $\varphi_{\der}\colon \BE_{\der} \to \BL_{\mathrm{H}}$ on the Hilbert scheme, viewed as a moduli space of ideal sheaves.

\smallbreak
Recall that the \emph{Atiyah class} of a perfect complex $P$ on a scheme $Y$ is an element
\[
\At_P \,\in\, \Ext^1(P,P\otimes \BL_Y[1]) = \Hom(P[-1],P\otimes \BL_Y).
\]
Working over $Y = \BA^3 \times \mathrm{H}$, by projecting along $\pi$ we will view our Atiyah classes as elements in the group
\[
\At_P \in \Hom(P[-1],P\otimes \pi^\ast \BL_{\mathrm{H}}).
\]
By the functoriality of Atiyah classes, we have a commutative diagram
\[
\begin{tikzcd}[row sep=large,column sep=large]
  \OO_{\BA^3\times \mathrm{H}}[-1] \arrow{r}\arrow{d}{0=\At_{\OO}}
& \OO_{\CZ}[-1]\arrow{r}\arrow{d}{\At_{\OO_{\CZ}}}
& \mathfrak I_{\CZ}\arrow[dotted]{dl}[description]{u} \arrow{d}{\At_{\mathfrak I_{\CZ}}} \\
  \OO_{\BA^3\times \mathrm{H}} \otimes \pi^\ast \BL_{\mathrm{H}}\arrow{r}
& \OO_{\CZ}\otimes \pi^\ast \BL_{\mathrm{H}}\arrow{r}
& \mathfrak I_{\CZ}[1] \otimes \pi^\ast \BL_{\mathrm{H}}
\end{tikzcd}
\]
where we observed that the Atiyah class of the trivial line bundle vanishes, to give the diagonal arrow 
\[
u \colon \mathfrak I_{\CZ} \to \OO_{\CZ} \otimes \pi^\ast \BL_{\mathrm{H}}.
\]
The commutativity of the two triangles forming the right square translates into a commutative diagram
\[
\begin{tikzcd}[row sep=large,column sep=large]
\RRlHom(\mathfrak I_{\CZ},\mathfrak I_{\CZ}[1])^\vee \arrow[swap]{d}{\OO_{\CZ} \to \mathfrak I_{\CZ}[1]}\arrow[bend left=15]{dr}{\At_{\mathfrak I_{\CZ}}} & \\
\RRlHom(\mathfrak I_{\CZ},\OO_{\CZ})^\vee \arrow{r}{u} & \pi^\ast \BL_{\mathrm{H}} \\
\RRlHom(\OO_{\CZ}[-1],\OO_{\CZ})^\vee \arrow[swap,,bend right=15]{ur}{\At_{\OO_{\CZ}}}\arrow{u}{\OO_{\CZ}[-1] \to \mathfrak I_{\CZ}}
\end{tikzcd}
\]
After dualising and applying $\tau_{[0,1]}\circ \RR\pi_\ast$, we obtain a commutative diagram
\[
\begin{tikzcd}[row sep=large]
\RRlHom_\pi(\mathfrak I_{\CZ},\mathfrak I_{\CZ})_0[1] & & \\
\tau_{[0,1]}\RRlHom_\pi(\mathfrak I_{\CZ},\OO_{\CZ})\arrow[u,"\alpha"]\arrow[swap]{d}{\beta} & \RR\pi_\ast \pi^{\ast}\BT_{\mathrm{H}}\arrow{l} & \BT_{\mathrm{H}}\arrow{l}\arrow[bend right=15,swap]{ull}[description]{\varphi_{\der}^\vee}\arrow{dd}{\varphi^\vee}\arrow[bend left=15]{dll}[description]{\pi_\ast\At_{\OO_{\CZ}}^\vee} \\
\tau_{[0,1]}\RRlHom_\pi(\OO_{\CZ}[-1],\OO_{\CZ})\arrow[swap]{rrd}{\gamma} & & \\
& & \tau_{[0,1]} \eta^\ast \boldit{q}^\ast \BT_{\BCal{M}_n} 
\end{tikzcd}
\]
where $\gamma$ is the isomorphism \eqref{truncated_tangent}, and the lower right part of the diagram, stating that the composition $\gamma \circ \pi_\ast\At_{\OO_{\CZ}}^\vee$ agrees with the map $\varphi^\vee$ dual to the map $\varphi$ appearing in Diagram \eqref{triangle}, is an immediate consequence of \cite[Appendix~A]{DerivedDet} or \cite[Proposition~2.4.7]{TangentLieAlgebra}.

Dualising back, we conclude the proof of \Cref{thm:symmetric_pot}. Therefore we have proved Conjecture 9.9 in \cite{FMR}.


\bibliographystyle{amsplain-nodash}
\bibliography{bib}

\end{document}